\numberwithin{equation}{section}
\numberwithin{figure}{section}
\theoremstyle{plain}
\newtheorem{thm}{\protect\theoremname}
  \theoremstyle{definition}
  \newtheorem{defn}[thm]{\protect\definitionname}
  \theoremstyle{remark}
  \newtheorem{rem}[thm]{\protect\remarkname}
  \theoremstyle{definition}
  \newtheorem{example}[thm]{\protect\examplename}
  \theoremstyle{plain}
  \newtheorem{lem}[thm]{\protect\lemmaname}
  \theoremstyle{plain}
  \newtheorem{cor}[thm]{\protect\corollaryname}
  \theoremstyle{plain}
  \newtheorem*{lem*}{\protect\lemmaname}
  \providecommand{\corollaryname}{Corollary}
  \providecommand{\definitionname}{Definition}
  \providecommand{\examplename}{Example}
  \providecommand{\lemmaname}{Lemma}
  \providecommand{\remarkname}{Remark}
\providecommand{\theoremname}{Theorem}
\begin{document}

\title{Decay Rate of Iterated Integrals of Branched Rough Paths }

\author{Horatio Boedihardjo }
\begin{abstract}
Iterated integrals of paths arise frequently in the study of the Taylor's
expansion for controlled differential equations. We will prove a factorial
decay estimate, conjectured by M. Gubinelli, for the iterated integrals
of non-geometric rough paths. We will explain, with a counter example,
why the conventional approach of using the neoclassical inequality
fails. Our proof involves a concavity estimate for sums over rooted
trees and a non-trivial extension of T. Lyons' proof in 1994 for the
factorial decay of iterated Young's integrals. 
\end{abstract}

\address{Department of Mathematics and Statistics, Reading University, UK. }

\keywords{Branched rough paths; non-geometric rough paths; iterated integrals. }

\thanks{A substantial part of this work was carried out while the author
was at Oxford-Man Institute, Oxford University, UK. We are grateful
to the support of ERC grant Esig (agreement no. 291244 ) during that
period. We would also like to thank M. Gubinelli and D. Yang for the
useful discussions, as well as the anonymous referee for the detailed
comments. }

\maketitle

\section{Introduction }

The iterated integrals of a path arise naturally from the Taylor's
expansion of a controlled differential equation driven by the path
and play a fundamental role in the theory of rough paths \cite{Lyons98}.
Given a path $x$, we are interested in the behaviour of the iterated
integral 
\begin{equation}
X_{0,1}^{n}=\int_{0<s_{1}<\ldots<s_{n}<1}\mathrm{d}x_{s_{1}}\otimes\ldots\otimes\mathrm{d}x_{s_{n}}\label{eq:iterated integrals}
\end{equation}
as $n$ varies. The solution for a linear controlled differential
equation has a series expansion that is linear in these iterated integrals.
The convergence of the series expansion is often studied using the
decay of these iterated integrals. The simplest example is the case
when $x$ takes value in $\mathbb{R}^{d}$ and has an almost everywhere
derivative in $L^{\infty}$, in which case 
\[
\Vert X_{0,1}^{n}\Vert\leq\frac{\left|\dot{x}\right|_{L^{\infty}}^{n}}{n!}.
\]
The problem becomes much harder when $x$ does not have a derivative,
such as in the case when the iterated integral (\ref{eq:iterated integrals})
is defined in terms of Young's integration. It was proved by Lyons
\cite{Lyons94} that if $x$ is $\gamma$-Hölder, $\gamma>\frac{1}{2}$,
and $\left\Vert x\right\Vert _{\gamma}$ denotes the $\gamma$-Hölder
norm of $x$, then 
\begin{equation}
\Vert X_{0,1}^{n}\Vert\leq\left(1+\zeta\left(2\gamma\right)\right)^{n-1}\frac{\left\Vert x\right\Vert _{\gamma}^{n}}{n!^{\gamma}},\label{eq:lyons 94 estimate}
\end{equation}
where $\zeta$ is the classical Riemann Zeta function. For $0<\gamma\leq\frac{1}{2}$
and $N=\lfloor\gamma^{-1}\rfloor$, a $\gamma$ -Hölder geometric
rough path takes value in the unital tensor algebra 
\[
T^{(N)}\left(\mathbb{R}^{d*}\right)=1\oplus\mathbb{R}^{d*}\ldots\oplus\left(\mathbb{R}^{d*}\right)^{\otimes N}
\]
where $\mathbb{R}^{d*}$ denotes the dual of $\mathbb{R}^{d}$. Lyons
\cite{Lyons98} showed that a $\gamma$ -Hölder rough path $x$ can
be extended uniquely to a $\gamma$ -Hölder path $\mathbf{X}$ in
$T^{(n)}\left(\mathbb{R}^{d*}\right)$ for any $n\geq N$. He defined
the $n$-th order iterated integrals of $x$ up to time $1$ as the
$n$-th tensor component of this extended path at time $1$, which
we will denote for latter use as $X_{0,1}^{n}$, and showed that
\[
\left\Vert X_{0,1}^{n}\right\Vert \leq\gamma^{-n}\left(1+2^{\left(N+1\right)\gamma}\zeta\left(\left(N+1\right)\gamma\right)\right)^{n}\frac{\left\Vert x\right\Vert _{\gamma}^{n}}{\Gamma\left(n\gamma+1\right)},
\]
where $\Gamma$ is the Gamma function and $\left\Vert x\right\Vert _{\gamma}$
denotes the $\gamma$-Hölder norm of the rough path $x$. 

Recently Gubinelli \cite{RamificationofBranchedRoughPath10} proposed
a \textit{non-geometric }theory of rough path, known as the branched
rough paths. The phrase ``non-geometric'' here refers to that the
calculus with respect to branched rough paths does not have to satisfy
the chain rule 
\[
\mathrm{d}\left(XY\right)=Y\mathrm{d}X+X\mathrm{d}Y
\]
which Lyons' geometric rough paths must satisfy. For the Brownian
motion $B$, the rough path (almost surely defined) 
\[
\left(s,t\right)\rightarrow\left(1,\int_{s}^{t}\mathrm{d}B_{s_{1}},\int_{s}^{t}\int_{s}^{u_{1}}\mathrm{d}B_{s_{1}}\otimes\mathrm{d}B_{s_{2}}\right)
\]
is geometric if the integration is defined in the sense of Stratonovich
and non-geometric if the integration is defined in the sense of Itô.
Branched rough paths are indexed by the Connes-Kremier Hopf algebra
of labelled rooted trees, which we will denote by $\mathcal{H}_{\mathcal{L}}$
and will recall in Section 2. The multiplication of trees in $\mathcal{H}_{\mathcal{L}}$
corresponds to the multiplication of the coordinate components of
the path, while the operation of joining forests to a single root
corresponds to integrating against the path. The theory of branched
rough paths is a rough path analogue Butcher's tree-indexed series
expansions of solutions to differential equations and has also been
motivated by expansions in stochastic partial differential equations.
We now recall an equivalent definition of branched rough path due
to Hairer-Kelly \cite{HairerKelly12}.
\begin{defn}
(\cite{RamificationofBranchedRoughPath10}, \cite{HairerKelly12})\label{branched rough path definition}Let
$0<\gamma\leq1$. Let $\left(\mathcal{H}_{\mathcal{L}},\cdot,\triangle,S\right)$
be the Connes-Kremier Hopf algebra of rooted trees labelled by a finite
set $\mathcal{L}$. Let $\left(\mathcal{H}_{\mathcal{L}}^{*},\star,\delta,s\right)$
be the dual Hopf algebra of $\left(\mathcal{H}_{\mathcal{L}},\cdot,\triangle,S\right)$.
A $\gamma$-branched rough path is a map $X:\left[0,1\right]\times\left[0,1\right]\rightarrow\mathcal{H}_{\mathcal{L}}^{*}$
such that 

1. for all $s\leq t$ and all $h_{1},h_{2}\in\mathcal{H}_{\mathcal{L}}$,
\begin{equation}
\langle X_{s,t},h_{1}\rangle\langle X_{s,t},h_{2}\rangle=\langle X_{s,t},h_{1}\cdot h_{2}\rangle.\label{eq:tree homomorphism}
\end{equation}

2. for all $u\leq s\leq t$, 
\begin{equation}
X_{u,s}\star X_{s,t}=X_{s,t}.\label{eq:tree multiplicative}
\end{equation}

3.for all labelled rooted tree $\tau$, if $|\tau|$ denote the number
of vertices in $\tau$, then 
\begin{equation}
\left\Vert X\right\Vert _{\gamma,\tau}:=\sup_{s\neq t}\frac{|\langle X_{s,t},\tau\rangle|}{\left|t-s\right|^{\gamma|\tau|}}<\infty.\label{eq:Holder}
\end{equation}
\end{defn}
\begin{rem}
Hairer-Kelly \cite{HairerKelly12} pointed out that the product $\star$
in $\mathcal{H}_{\mathcal{L}}^{*}$ is induced by the coproduct $\triangle$
on $\mathcal{H}_{\mathcal{L}}$ in the following sense: If $h\in\mathcal{H}_{\mathcal{L}}$
is a rooted tree and $\triangle h=\sum\boldsymbol{h^{\left(1\right)}}\otimes h^{\left(2\right)}$,
then for $X,Y\in\mathcal{H}_{\mathcal{L}}^{*}$, 
\[
\langle X\star Y,h\rangle=\sum\langle X,\boldsymbol{h^{(1)}}\rangle\langle Y,h^{(2)}\rangle.
\]
Hairer-Kelly also realised that condition 1. in the Definition \ref{branched rough path definition}
of branched rough path is equivalent to $X$ taking value in the group
of characters of the Hopf algebra, known as the\textit{ Butcher group},
analogous to the nilpotent Lie group in the geometric case. \end{rem}
\begin{example}
\label{extending a branched rough path}Let $\frac{1}{2}<\gamma\leq1$,
and $x=\left(x^{1},\ldots,x^{d}\right):\left[0,1\right]\rightarrow\mathbb{R}^{d}$
be a $\gamma$-Hölder path in the sense that 
\[
\sup_{s\neq t}\frac{\left\Vert x_{t}-x_{s}\right\Vert }{\left|t-s\right|^{\gamma}}<\infty.
\]
The $\gamma>\frac{1}{2}$ assumption allows us to use Young's integration.
According to Gubinelli \cite{RamificationofBranchedRoughPath10},
we may lift $x$ to a $\gamma$-branched rough path $X$ in the following
way: 

Let $\bullet_{i}$ be a vertex labelled by $i\in\left\{ 1,\ldots,d\right\} $.
Let $\tau_{1},\ldots,\tau_{n}$ be rooted trees labelled by $\left\{ 1,\ldots,d\right\} $,
and $[\tau_{1},\ldots,\tau_{n}]_{\bullet_{i}}$ denote the labelled
tree obtained by connecting the roots of $\tau_{1},\ldots,\tau_{n}$
to the labelled vertex $\bullet_{i}$. Then $X$ is defined inductively
by $\langle X_{s,t},\bullet_{i}\rangle=x_{t}^{i}-x_{s}^{i}$ and 
\begin{equation}
\langle X_{s,t},[\tau_{1},\ldots,\tau_{n}]_{\bullet_{i}}\rangle=\int_{s}^{t}\Pi_{j=1}^{n}\langle X_{s,u},\tau_{j}\rangle\mathrm{d}x_{u}^{i}.\label{eq:product and joinning}
\end{equation}
We now explain why the integration in (\ref{eq:product and joinning})
can be defined in the sense of Young: if $u\rightarrow\langle X_{s,u},\tau_{j}\rangle$
is $\gamma$-Hölder, then the product $\Pi_{j=1}^{n}\langle X_{s,u},\tau_{j}\rangle$
is also $\gamma$-Hölder. By for example Theorem 1.16 in \cite{LCL07},
the integral in (\ref{eq:product and joinning}) is also $\gamma$-Hölder.
More generally, in \cite{HairerKelly12}, Hairer-Kelly gave an explicit
way of extending a $\gamma$-geometric rough path to a $\gamma$-branched
rough path.

For general $0<\gamma\leq1$, let $N=\lfloor\gamma^{-1}\rfloor$,
then by Theorem 7.3 in \cite{RamificationofBranchedRoughPath10},
given a family of real-valued functions $\left(\langle X_{\cdot,\cdot},\tau\rangle\right)_{\tau\in\mathcal{H}_{\mathcal{L}},\left|\tau\right|\leq N}$
on $\left[0,1\right]\times\left[0,1\right]$ satisfying the conditions
(1),(2) and (3) in Definition \ref{branched rough path definition}
of branched rough path, there is a unique way of extending $\left(\langle X_{\cdot,\cdot},\tau\rangle\right)_{\left|\tau\right|\leq N}$
to a $\gamma$-branched rough path $\left(\langle X_{\cdot,\cdot},\tau\rangle\right)_{\tau\in\mathcal{H}_{\mathcal{L}},\left|\tau\right|\geq0}$.
Gubinelli \cite{RamificationofBranchedRoughPath10} conjectured that
this extension, which can be interpreted as the iterated integrals
of the truncated branched rough path $\left(\langle X_{\cdot,\cdot},\tau\rangle\right)_{\tau\in\mathcal{H},\left|\tau\right|\leq N}$
has a tree factorial decay. Our main result, stated below, is a proof
of this conjecture. \end{example}
\begin{thm}
\label{thm:main result}Let $0<\gamma\leq1$ and $N=\lfloor\gamma^{-1}\rfloor$.
Let $X$ be a $\gamma-$branched rough path. For all rooted trees
$\tau$ and all $s\leq t$, 
\begin{equation}
|\langle X_{s,t},\tau\rangle|\leq\frac{\overline{c}_{N}^{\left|\tau\right|}(t-s)^{\gamma|\tau|}}{\tau!^{\gamma}}.\label{eq:main result bound}
\end{equation}
where 
\begin{eqnarray*}
\overline{c}_{N} & = & 6\exp\left(7\sum_{i=0}^{N+1}\left(N+1\right){}^{i+1}\right)\left|\mathcal{T}^{N}\right|^{2-2\gamma}2^{\left(N+1\right)\gamma}\zeta\left(\left(N+1\right)\gamma\right)N!^{\gamma}\max_{1\leq\left|\sigma\right|\leq N}\left\Vert X\right\Vert _{\gamma,\sigma}^{\left|\sigma\right|^{-1}},
\end{eqnarray*}
$\left\Vert X\right\Vert _{\gamma,\sigma}$ is the Hölder norm of
$X$ as defined in (\ref{eq:Holder}) and $\mathcal{T}^{N}$ is the
set of unlabelled rooted trees with at most $N$ vertices.\end{thm}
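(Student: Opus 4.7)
The plan is to adapt T.~Lyons' 1994 sewing argument for iterated Young integrals to the tree-indexed setting, proceeding by induction on $|\tau|$, the base case being $|\tau|\le N$, which is immediate from condition (3) of Definition \ref{branched rough path definition}; the factors $\tau!^\gamma\le N!^\gamma$ and $\Vert X\Vert_{\gamma,\tau}$ are absorbed into $\overline c_N$ by the $N!^\gamma$ and $\max_{|\sigma|\le N}\Vert X\Vert_{\gamma,\sigma}^{|\sigma|^{-1}}$ factors built into its definition.

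For the inductive step, fix $\tau$ with $|\tau|>N$ and suppose the bound holds for every tree of strictly smaller size. For a partition $\pi=\{s=t_0<\cdots<t_m=t\}$ set $S_\pi(\tau):=\sum_{i=0}^{m-1}\langle X_{t_i,t_{i+1}},\tau\rangle$. The multiplicativity (\ref{eq:tree multiplicative}) together with the Hairer--Kelly description of $\star$ in terms of the Connes--Kreimer coproduct
\[
\triangle\tau=\tau\otimes\mathbf 1+\mathbf 1\otimes\tau+\sum_{(\tau)}{}'\tau^{(1)}\otimes\tau^{(2)}
\]
(the prime ranging over proper admissible cuts, for which $|\tau^{(1)}|+|\tau^{(2)}|=|\tau|$) shows that inserting or removing a single point $t_j$ between $t_{j-1}$ and $t_{j+1}$ changes $S_\pi(\tau)$ by precisely $\sum_{(\tau)}'\langle X_{t_{j-1},t_j},\tau^{(1)}\rangle\langle X_{t_j,t_{j+1}},\tau^{(2)}\rangle$. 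Starting from a partition fine enough that $S_\pi(\tau)$ is arbitrarily small (Theorem~7.3 of \cite{RamificationofBranchedRoughPath10} gives an a priori $\gamma|\tau|$-H\"older bound on $\langle X,\tau\rangle$, and $\gamma|\tau|>1$ since $|\tau|\ge N+1$) and coarsening step by step, always deleting the point of smallest adjacent gap, gives the telescoping identity
\[
\langle X_{s,t},\tau\rangle=\sum_{k=1}^{\infty}\sum_{(\tau)}{}'\langle X_{a_k,b_k},\tau^{(1)}\rangle\langle X_{b_k,c_k},\tau^{(2)}\rangle,\qquad c_k-a_k\le\tfrac{2(t-s)}{k}.
\]

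Applying the inductive hypothesis to each $\langle X,\tau^{(i)}\rangle$, using $\alpha^{\gamma|\tau^{(1)}|}\beta^{\gamma|\tau^{(2)}|}\le(\alpha+\beta)^{\gamma|\tau|}$, and summing in $k$ against $\zeta(\gamma|\tau|)\le\zeta(\gamma(N+1))$, one obtains
\[
|\langle X_{s,t},\tau\rangle|\le 2^{\gamma|\tau|}\zeta(\gamma(N+1))\,\overline c_N^{|\tau|}(t-s)^{\gamma|\tau|}\sum_{(\tau)}{}'\frac{1}{(\tau^{(1)}!\,\tau^{(2)}!)^\gamma}.
\]
The induction then closes provided one can prove the \emph{concavity estimate for sums over rooted trees} promised in the abstract, namely a bound of the form
\[
\sum_{(\tau)}{}'\frac{1}{(\tau^{(1)}!\,\tau^{(2)}!)^\gamma}\le\frac{\kappa(N,\gamma)}{\tau!^\gamma},
\]
whose constant $\kappa(N,\gamma)$ is at worst polynomial in $|\mathcal T^N|$ and is then folded into the exponential prefactor of $\overline c_N$.

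The principal obstacle is precisely this tree-combinatorial inequality. The abstract warns that an imitation of the scalar neoclassical inequality fails here: unlike the split $n=i+j$, the tree factorial $\tau!=\prod_{v\in\tau}|\tau_v|$ does not factor through a Connes--Kreimer cut, since the pruned forest $\tau^{(1)}$ may consist of many components of unrelated sizes while the trunk $\tau^{(2)}$ still carries the large-scale structure of $\tau$. I expect the proof to proceed via a secondary induction over admissible cuts, exploiting that cuts high in the tree produce small pruned forests whereas cuts low down preserve most of the $\tau!$ weight; any loss here is exactly what forces $\overline c_N$ to carry the explicit $\exp(7\sum_{i=0}^{N+1}(N+1)^{i+1})$ and $|\mathcal T^N|^{2-2\gamma}$ factors appearing in the statement.
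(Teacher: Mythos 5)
Your plan founders on the combinatorial inequality you defer to the end, and the failure is not a matter of constants that can be ``folded into'' $\overline{c}_N$: the inequality $\sum_{(\tau)}'\,(\boldsymbol{\tau^{(1)}}!\,\tau^{(2)}!)^{-\gamma}\leq\kappa(N,\gamma)\,\tau!^{-\gamma}$ is false. By the tree binomial theorem (Lemma \ref{Tree-binomial-theorem}), already at $\gamma=1$ the sum over proper admissible cuts equals $(2^{|\tau|}-2)/\tau!$, and for the corolla $\tau=[\bullet^n]_\bullet$ (where $\tau!=n+1$ and the cut $\bullet^l\otimes[\bullet^{n-l}]_\bullet$ occurs with multiplicity $\binom{n}{l}$) the left-hand side grows like $2^{n}$ times the right-hand side for every $\gamma\leq1$. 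This is essentially the tree neoclassical inequality whose failure is the whole point of Section 3 (Lemma \ref{lem:Let--becounter example}). An exponential-in-$|\tau|$ loss at each induction step cannot be absorbed: your inductive hypothesis supplies $\overline{c}_N^{|\tau^{(1)}|}\cdot\overline{c}_N^{|\tau^{(2)}|}=\overline{c}_N^{|\tau|}$, which is already the full budget of the target bound, so the multiplicative overhead of the step must be bounded by $1$, not by $\kappa^{|\tau|}$ (rescaling the constant does not help, since the rescaling factor appears identically on both sides). A secondary source of the loss is your step $\alpha^{\gamma|\boldsymbol{\tau^{(1)}}|}\beta^{\gamma|\tau^{(2)}|}\leq(\alpha+\beta)^{\gamma|\tau|}$, which discards exactly the binomial cancellation between the time increments and the factorials that any such argument must preserve; but even retaining it, what you would then need is the weighted inequality (\ref{eq:counter example.}), which Lemma \ref{lem:Let--becounter example} shows diverges.

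Your telescoping set-up also commits to the ``full coproduct'' Riemann sum (the analogue of (\ref{eq:riemann sum geometric})), which is precisely the route that requires a neoclassical inequality. The paper instead follows the Lyons 94 route (\ref{eq:riemann sum non geometric}): the Riemann sum uses only $X_{s,t_i}^{n-k}\star X_{t_i,t_{i+1}}^{k}$ with $1\leq k\leq N$, so that removing a partition point only ever produces cross terms whose right factor has at most $N$ vertices. The concavity estimate actually proved (Lemma \ref{lem:Convexity estimate for trees}) concerns only the sum $\sum_{\tau^{(2)}=\sigma}$ over cuts with a \emph{fixed trunk} $\sigma$ of size $\leq N$, weighted by $\beta^{-c(\boldsymbol{\tau^{(1)}})}$, with a constant depending on $|\sigma|$ alone; the multiplicity of cuts is then controlled through the $\beta^{c(\cdot)}$-weighted tree and forest norms (Lemmas \ref{lem:tree sum} and \ref{tree to forest}), and the loss of the binomial tail is handled by the explicit control $S^{(N+1)}\bigl(\rho_u^{n/(N+1)}\bigr)_{s,t}$ dominating $\sum_{j\geq N+1}\frac{(s-u)^{n-j}(t-s)^{j}}{(n-j)!\,j!}$ (Lemma \ref{Taylor binomial}), culminating in the induction of Lemma \ref{main lemma}. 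None of these ingredients appears in your outline, and without them the induction does not close.
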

\begin{rem}
For $\gamma=1$, Gubinelli \cite{RamificationofBranchedRoughPath10}
showed that the decay rate in (\ref{eq:main result bound}) is attained
for the identity path $X$, defined for all rooted trees $\tau$ by
\[
\langle X_{s,t},\tau\rangle=\frac{\left(t-s\right)^{\left|\tau\right|}}{\tau!}.
\]

\end{rem}

\begin{rem}
Gubinelli \cite{Gubinelli Navier Stokes} used a similar type of factorial
decay estimate to prove the convergence of his series expansion for
the solution of the three-dimensional Navier-Stokes equation for sufficiently
small initial data. 
\end{rem}
Theorem \ref{thm:main result}, together with the Hairer-Kelly result
\cite{HairerKelly12} that geometric rough paths are branched rough
paths, gives another proof for the factorial decay for geometric rough
paths. In some cases, our main result gives a sharper estimate for
the shuffled sum of the iterated integrals than the one derived using
shuffle product and the factorial decay for geometric rough paths,
as the following example demonstrates. 
\begin{example}
Let $x$ and $y$ be real valued $\gamma-$Hölder paths on $\left[0,1\right]$,
$\frac{1}{2}<\gamma\leq1$. Then we may estimate absolute value of
the integral 
\[
\int_{0}^{1}x_{s}^{n}\mathrm{d}y_{s}
\]
by the ``geometric method''
\begin{eqnarray}
\left|\int_{0}^{1}x_{s}^{n}\mathrm{d}y_{s}\right| & = & n!\left|\int_{0}^{1}\int_{0}^{s}\ldots\int_{0}^{s_{2}}\mathrm{d}x_{s_{1}}\ldots\mathrm{d}x_{s_{n}}\mathrm{d}y_{s}\right|\nonumber \\
 & \leq & n!\left(1+\zeta\left(2\gamma\right)\right)^{n-1}\frac{\left\Vert \left(x,y\right)\right\Vert _{\gamma}^{n+1}}{\left(n+1\right)!^{\gamma}}\label{eq:geometric estimate}
\end{eqnarray}
where the inequality follows from Lyons' factorial decay estimate
(\ref{eq:lyons 94 estimate}). 

By Example \ref{extending a branched rough path}, we may extend the
two-dimensional path $\left(x,y\right)$ to a $\gamma-$branched rough
path $X$. Let $\boldsymbol{\sigma}_{n}$ be the labelled forest defined
inductively by $\boldsymbol{\sigma}_{1}=\bullet_{1}$ and $\boldsymbol{\sigma}_{n}=\boldsymbol{\sigma}_{n-1}\cdot\bullet_{1}$
where the operation $\cdot$ is the formal multiplication of rooted
trees. Let $\tau_{n}=\left[\boldsymbol{\sigma}_{n}\right]_{\bullet_{2}}$.
Graphically, $\tau_{n}$ takes the following form: 

\begin{figure}
\centering\includegraphics[scale=0.4]{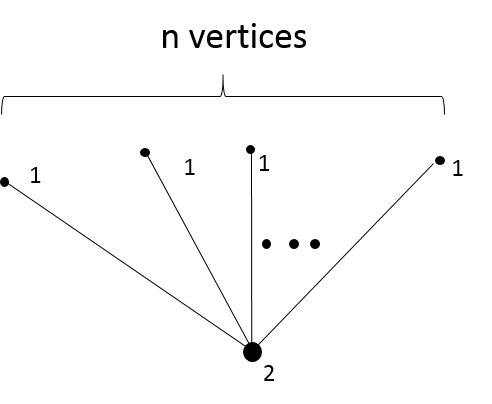}\caption{Tree with $n$ branches}
\end{figure}

Then 
\[
\langle X,\tau_{n}\rangle=\int_{0}^{1}x_{s}^{n}\mathrm{d}y_{s}
\]
 and our main result Theorem \ref{thm:main result} gives the estimate
\begin{eqnarray}
\left|\int_{0}^{1}x_{s}^{n}\mathrm{d}y_{s}\right| & \leq & \frac{\overline{c}_{1}^{n+1}}{\tau_{n}!^{\gamma}}=\frac{\overline{c}_{1}^{n+1}}{\left(n+1\right)^{\gamma}},\label{eq:branched estimate}
\end{eqnarray}
where $\overline{c}_{1}$ is the constant (independent of $n$) appearing
in our main result Theorem \ref{thm:main result}. As $n\rightarrow\infty$,
this growth rate is a much better rate than the factorial growth given
by the geometric estimate (\ref{eq:geometric estimate}). 

One reason why the geometric method fares badly here is that it bounds
the slowest decaying coordinate iterated integrals. This particular
integral 
\[
\int_{0}^{1}\int_{0}^{s}\ldots\int_{0}^{s_{2}}\mathrm{d}x_{s_{1}}\ldots\mathrm{d}x_{s_{n}}\mathrm{d}y_{s}
\]
decays a lot faster than the slowest decaying coordinate integral
of order $n+1$. The failure of the geometric method demonstrates
that the case of ``fat tree'' in Figure 1.1 is delicate and requires
a different type of estimate, which is the purpose of Section 5. This
problem of fat tree will be explored in more details in Section 3. 
\end{example}

\subsection{The strategy of proof}

Lyons \cite{Lyons98} proved the factorial decay for $\gamma$-geometric
rough paths using the following inductive definition of $X^{n}$ 
\begin{equation}
X_{u,t}^{n}=\lim_{\substack{|\mathcal{P}|\rightarrow0\\
\mathcal{P}\subset[u,t]
}
}\sum_{t_{i}\in\mathcal{P}}\sum_{k=1}^{n-1}X_{u,t_{i}}^{n-k}X_{t_{i},t_{i+1}}^{k}.\label{eq:riemann sum geometric}
\end{equation}
This approach requires the use of a highly non-trivial binomial-type
inequality, known as the neoclassical inequality, of the form 
\begin{equation}
\sum_{i=0}^{n}\frac{a^{i\gamma}b^{\left(n-i\right)\gamma}}{\Gamma\left(i\gamma+1\right)\Gamma\left(\left(n-i\right)\gamma+1\right)}\leq\gamma^{-2}\frac{\left(a+b\right)^{n\gamma}}{\Gamma\left(n\gamma+1\right)}\label{eq:original neoclassicla inequality}
\end{equation}
which is proved Lyons' 98 paper \cite{Lyons98}. A sharp version of
this inequality latter appeared in the work of Hara and Hino \cite{HaraHino10}.
Gubinelli showed in \cite{RamificationofBranchedRoughPath10} that
a sufficient condition for the factorial decay for branched rough
paths is a neoclassical inequality for rooted trees. Unfortunately,
we are able to give a counter example for such inequality (see Lemma
\ref{lem:Let--becounter example} in Section 3). Lyons' 94 approach
\cite{Lyons94}, which proved the factorial decay for the $\gamma>\frac{1}{2}$
case, did not use the neoclassical inequality and use instead the
equivalent definition 
\begin{equation}
X_{u,t}^{n}=\lim_{\substack{|\mathcal{P}|\rightarrow0\\
\mathcal{P}\subset[u,t]
}
}\sum_{t_{i}\in\mathcal{P}}\sum_{k=1}^{N}X_{u,t_{i}}^{n-k}X_{t_{i},t_{i+1}}^{k}\label{eq:riemann sum non geometric}
\end{equation}
where $N=\lfloor\gamma^{-1}\rfloor$. This approach also has its own
difficulty, due to the fact that the function $\left(s,t\right)\rightarrow\omega_{u}\left(s,t\right)$
defined by 
\[
\omega_{u}\left(s,t\right)=\left(\sum_{k=N+1}^{m}\frac{\left(s-u\right)^{m-k}\left(t-s\right)^{k}}{\left(m-k\right)!k!}\right)^{\frac{1}{N+1}}
\]
is not a control in the sense that $\omega_{u}\left(s,v\right)+\omega_{u}\left(v,t\right)\nleq\omega_{u}\left(s,t\right)$.
The control property is essential in the use of Young's method of
estimating (\ref{eq:riemann sum non geometric}) by successively removing
partition points from the partition. Lyons' 94 approach \cite{Lyons94}
gets around this problem by using a control function $\left(s,t\right)\rightarrow R_{u}\left(s,t\right)$
which dominates $\omega_{u}\left(s,t\right)$ and satisfies some binomial
properties similar to that of $\omega_{u}\left(s,t\right)$. A key
difficulty in this paper is to find the right function $R$ in the
case of branched rough paths. Our strategy consists of:
\begin{enumerate}
\item Proving a bound for the multiplication operator $\star$ with respect
to some norm, analogous to the following bound of tensor product 
\[
\left\Vert a\otimes b\right\Vert \leq\left\Vert a\right\Vert \left\Vert b\right\Vert 
\]
for $a\in V^{\otimes m}$ and $b\in V^{\otimes n}$ in the geometric
case. 
\item Prove that our function $R$ is compatible with the tree multiplication. 
\item Prove that our function $R$ is compatible with the operation of joining
forests to a single root. 
\end{enumerate}

\section{Branched Rough paths: Notation and Terminology }

We first recall the setting of the Connes-Kreimer \cite{Connes Kremier}
Hopf algebra which indexes Branched rough paths. A rooted tree is
a connected, rooted graph such that for every vertex in the graph,
there exists a unique path from the root to the vertex. Let $\mathcal{T}$
denote the set of rooted trees. The empty tree will be denoted by
$1$. A \textit{forest} is a finite set of rooted trees. The set of
forests will be denoted by $\mathcal{F}$. We will identify two trees
$\tau_{1}$ and $\tau_{2}$ if the forests obtained by removing the
respective roots from $\tau_{1}$ and $\tau_{2}$ are equal. We define
a commutative multiplication $\cdot$ on $\mathcal{F}$ by 
\[
x\cdot y=x\cup y.
\]
 Let $\bullet$ denote the rooted tree consisted of a single vertex.
We will use a bold symbol (e.g. $\boldsymbol{\tau}$) to denote a
forest while using the normal symbol (e.g. $\tau$) to denote a rooted
tree. For $\boldsymbol{\sigma}=\left\{ \tau_{1},\ldots,\tau_{n}\right\} \in\mathcal{F}$,
where $\tau_{1},\ldots,\tau_{n}$ are rooted non-empty trees, let
$\left[\boldsymbol{\sigma}\right]_{\bullet}$ denote the rooted tree
obtained by joining the roots of $\tau_{1},\ldots,\tau_{n}$ to the
vertex $\bullet$. Note that $\mathcal{F}$ is the set freely generated
by elements of the form $\left\{ \bullet\right\} $, through the operations
of $\cdot$ and $\boldsymbol{\sigma}\rightarrow\left[\boldsymbol{\sigma}\right]_{\bullet}$.
These two operations in fact correspond to the two fundamental operations
in rough path theory, namely the multiplication between path components
and the integration against a path.

To simplify our notation, we will denote the element $\left\{ \tau_{1},\ldots,\tau_{n}\right\} $
in $\mathcal{F}$ simply by $\tau_{1}\ldots\tau_{n}$. We will let
$\mathcal{H}$ denote the formal vector space spanned by $\mathcal{F}$
over $\mathbb{R}$. For a forest $\boldsymbol{\tau}$, $c\left(\boldsymbol{\tau}\right)$
will denote the number of non-empty trees in $\boldsymbol{\tau}$
and $\left|\boldsymbol{\tau}\right|$ denote the total number of vertices
in the forest. For each tree $\tau$, the tree factorial is defined
inductively as 
\begin{eqnarray*}
\bullet! & = & 1,\;\\
\left[\tau_{1},\ldots,\tau_{n}\right]_{\bullet}! & = & \left|\left[\tau_{1},\ldots,\tau_{n}\right]_{\bullet}\right|\tau_{1}!\ldots\tau_{n}!.
\end{eqnarray*}
The factorial of a forest $\tau_{1}\ldots\tau_{n}$ is defined to
be $\tau_{1}!\ldots\tau_{n}!$. 

A coproduct of rooted trees can be inductively defined as $\triangle:\mathcal{H}\rightarrow\mathcal{H}\otimes\mathcal{H}$,
\begin{eqnarray}
\triangle1 & = & 1\otimes1;\nonumber \\
\triangle\left[\tau_{1}\ldots\tau_{n}\right]_{\bullet} & = & \left[\tau_{1}\ldots\tau_{n}\right]_{\bullet}\otimes1+\sum\boldsymbol{\tau_{1}^{(1)}}\ldots\boldsymbol{\tau_{n}^{(1)}}\otimes\left[\tau_{1}^{(2)}\ldots\tau_{n}^{(2)}\right]_{\bullet};\label{eq:coproduct definition}\\
\triangle\left(\tau_{1}\ldots\tau_{n}\right) & = & \triangle\tau_{1}\ldots\triangle\tau_{n}.\nonumber 
\end{eqnarray}
where the sum in (\ref{eq:coproduct definition}) denotes summing
over all terms $\boldsymbol{\tau_{i}^{(1)}}$ and $\tau_{i}^{(2)}$
in $\triangle\tau_{i}=\sum\boldsymbol{\tau_{i}^{(1)}}\otimes\tau_{i}^{(2)}$.
While the coproduct was defined by Connes-Kreimer \cite{Connes Kremier},
this particular formulation was borrowed from Hairer-Kelly \cite{HairerKelly12}.
Here we define the product $\cdot$ on $\mathcal{H}\otimes\mathcal{H}$
by extending linearly the relation 
\[
\left(a\otimes b\right)\cdot\left(c\otimes d\right)=\left(a\cdot c\right)\otimes\left(b\cdot d\right).
\]
The coproduct operator $\triangle$ is coassociative. In Connes-Kreimer's
original work \cite{Connes Kremier}, an antipode operator $S$ has
been constructed explicitly for $\mathcal{H}$, so that the bialgebra
$\left(\mathcal{H},\cdot,\triangle,S\right)$ becomes a Hopf algebra.
This Hopf algebra is called the \textit{Connes-Kreimer Hopf algebra}. 
\begin{example}
The following are all the non-empty rooted trees with $3$ or less
vertices

\qquad{}\qquad{}\qquad{}\includegraphics[scale=0.4]{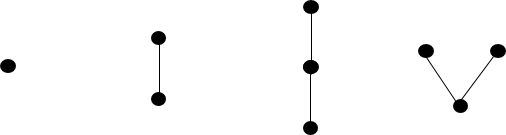}
\end{example}
The coproduct $\triangle$ also has an interpretation in terms of
cuts. A cut of a rooted tree is a set of edges in a rooted tree. A
cut is admissible for a rooted tree $\tau$ if for any vertex in $\tau$,
the path from the root to the vertex passes through at most one element
in the cut. For each admissible cuts $c$, let $\boldsymbol{\tau_{c}^{(1)}}$and
$\tau_{c}^{(2)}$denote, respectively the components in $\tau\backslash c$
that is disconnected from the the root and the component that is connected
to the root. Then 
\begin{equation}
\triangle\tau=\sum_{\mbox{Admissible cuts }c}\boldsymbol{\tau_{c}^{(1)}}\otimes\tau_{c}^{(2)}.\label{eq:coproduct expansion}
\end{equation}
Given a forest $\boldsymbol{\tau}=\tau_{1}\ldots\tau_{n}$ and $\boldsymbol{\sigma^{(1)}}$,$\boldsymbol{\sigma^{(2)}}$
in $\mathcal{F}$, we will define the counting function $c\left(\boldsymbol{\tau},\boldsymbol{\sigma^{(1)}},\boldsymbol{\sigma^{(2)}}\right)$
to be the number of times $\boldsymbol{\sigma^{(1)}}\otimes\boldsymbol{\sigma^{(2)}}$
appears in the sum (\ref{eq:coproduct expansion}). We will follow
the notation of Gubinelli \cite{RamificationofBranchedRoughPath10}
and use 
\[
\sum f\left(\boldsymbol{\tau},\boldsymbol{\tau^{(1)}},\boldsymbol{\tau^{(2)}}\right)
\]
to denote the summation over all $\boldsymbol{\tau^{(1)}}$ and $\boldsymbol{\tau^{(2)}}$
which appears in the sum (\ref{eq:coproduct expansion}).
\begin{rem}
Although the definition of branched rough paths requires the rooted
trees to be labelled, the assumption and conclusion of our main result
Theorem \ref{thm:main result} are uniform estimates across all labellings.
Therefore we can forget that there is any labelling and deal with
only unlabelled rooted trees or forests. We will let $\mathcal{T}^{n}$
and $\mathcal{F}^{n}$ denote, respectively, the set of all (unlabelled)
rooted trees and forests with $n$ vertices.
\end{rem}
We say $g\in\mathcal{H}^{*}$ lies in the group of characters of $\mathcal{H}^{*}$,
which we will denote by $\mathcal{G}$, if for all forests $\boldsymbol{\tau}$
and $\boldsymbol{\tilde{\tau}}$, 
\[
\langle g,\boldsymbol{\tau}\cdot\boldsymbol{\tilde{\tau}}\rangle=\langle X,\boldsymbol{\tau}\rangle\langle X,\boldsymbol{\tilde{\tau}}\rangle.
\]
In other words, $\mathcal{G}$ contains all the homomorphisms $g$
with respect to the tree multiplication $\cdot$. This formulation
can be found in, for instance Hairer-Kelly \cite{HairerKelly12}.

\section{Counter example to the tree neoclassical inequality }

We now give a counter example for a weaker version of the neoclassical
inequality, which would have been sufficient in proving the factorial
decay for the iterated integrals of branched rough paths. The notation
$\bullet^{n}$ will denote the forest 
\[
\underset{n}{\underbrace{\bullet\bullet\ldots\bullet}}.
\]

\begin{lem}
\label{lem:Let--becounter example}Let $\tau_{n}$ be the tree $\left[\bullet^{n}\right]_{\bullet}$.
Then for all $0\leq\gamma<1$, for all $\beta>0$, there exists $a,b>0$
such that as $n\rightarrow\infty$,  
\begin{equation}
\left(a+b\right)^{-\gamma|\tau_{n}|}\sum\left(\frac{\tau_{n}!}{\boldsymbol{\tau_{n}^{(1)}}!\tau_{n}^{(2)}!}\right)^{\gamma}\frac{1}{\beta^{c(\boldsymbol{\tau_{n}^{(1)}})+c(\tau_{n}^{(2)})}}a^{\gamma|\boldsymbol{\tau_{n}^{(1)}}|}b^{\gamma|\tau_{n}^{(2)}|}\rightarrow\infty.\label{eq:counter example.}
\end{equation}
\end{lem}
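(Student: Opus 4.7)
The plan is to compute the sum on the left side of \eqref{eq:counter example.} explicitly for the corolla $\tau_n=[\bullet^n]_\bullet$ and then to lower bound it by a geometric expression that overpowers the factor $(a+b)^{\gamma|\tau_n|}$.

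First I would classify the admissible cuts of $\tau_n$. Each of the $n$ edges is an independent root-to-leaf path, so every subset of the edge set is admissible; cutting $k$ of the $n$ edges produces $\boldsymbol{\tau_c^{(1)}}=\bullet^k$ and $\tau_c^{(2)}=[\bullet^{n-k}]_\bullet$ (with the convention $[\bullet^0]_\bullet=\bullet$), and there are $\binom{n}{k}$ such cuts for each $k\in\{0,\dots,n\}$. Since $\tau_n!=n+1$, $(\bullet^k)!=1$ and $[\bullet^{n-k}]_\bullet!=n-k+1$, the tree factorial ratio simplifies to $(n+1)/(n-k+1)\ge 1$, while $c(\boldsymbol{\tau_c^{(1)}})+c(\tau_c^{(2)})=k+1$.

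Next I would simply discard the nuisance factor $((n+1)/(n-k+1))^\gamma\ge 1$ and apply the binomial theorem to obtain the clean lower bound
\[
\sum\left(\frac{\tau_n!}{\boldsymbol{\tau_n^{(1)}}!\,\tau_n^{(2)}!}\right)^{\!\gamma}\frac{a^{\gamma|\boldsymbol{\tau_n^{(1)}}|}b^{\gamma|\tau_n^{(2)}|}}{\beta^{c(\boldsymbol{\tau_n^{(1)}})+c(\tau_n^{(2)})}}\ \ge\ \frac{b^\gamma}{\beta}\left(\frac{a^\gamma}{\beta}+b^\gamma\right)^{\!n}.
\]
Dividing by $(a+b)^{\gamma(n+1)}$ then shows that the normalised sum is bounded below by a constant multiple of $\bigl((a^\gamma/\beta+b^\gamma)/(a+b)^\gamma\bigr)^n$, which diverges as $n\to\infty$ as soon as
\[
\frac{a^\gamma}{\beta}+b^\gamma\ >\ (a+b)^\gamma.
\]

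The main (and essentially the only) remaining task is to exhibit $a,b>0$ satisfying this strict inequality for each fixed $\beta>0$; this is where the hypothesis $\gamma<1$ enters in a crucial way via the strict concavity of $x\mapsto x^\gamma$. The case $\gamma=0$ is trivial as the condition collapses to $1/\beta+1>1$. For $0<\gamma<1$ I would fix $b=1$ and consider $f(a)=a^\gamma/\beta+1-(a+1)^\gamma$; one has $f(0)=0$ while
\[
f'(a)=\frac{\gamma a^{\gamma-1}}{\beta}-\gamma(a+1)^{\gamma-1}\longrightarrow+\infty\quad\text{as }a\to 0^+,
\]
because $\gamma-1<0$ sends $a^{\gamma-1}$ to infinity. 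Hence $f(a)>0$ for all sufficiently small $a>0$, which completes the counter example.
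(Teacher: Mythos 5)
Your proposal is correct and follows essentially the same route as the paper: the same enumeration of admissible cuts of the corolla, the same lower bound $\frac{b^{\gamma}}{\beta}\bigl(\frac{a^{\gamma}}{\beta}+b^{\gamma}\bigr)^{n}$ via the binomial theorem after discarding the factor $\bigl((n+1)/(n-k+1)\bigr)^{\gamma}\geq1$, and the same choice $b=1$ with $a$ small. The only (immaterial) difference is in the last step, where the paper verifies $a^{\gamma}/\beta+1>(1+a)^{\gamma}$ via the explicit inequality $(1+a)^{\gamma}\leq1+\gamma a$ with the threshold $a<(\beta\gamma)^{1/(\gamma-1)}$, whereas you argue through the derivative of $f(a)=a^{\gamma}/\beta+1-(1+a)^{\gamma}$ blowing up at $0^{+}$.
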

\begin{proof}
By definition, $\tau_{n}!=n+1$. Observe that by the definition of
coproduct $\triangle$ (see (\ref{eq:coproduct definition})), 
\[
\triangle\tau_{n}=\sum_{l=0}^{n}{n \choose l}\bullet^{l}\otimes\tau_{n-l}+\tau_{n}\otimes1,
\]
where ${n \choose l}$ denotes the binomial coefficient $\frac{n!}{\left(n-l\right)!l!}$.
Therefore, 
\begin{eqnarray}
 &  & \left(a+b\right)^{-\gamma|\tau_{n}|}\sum\Big(\frac{\tau_{n}!}{\boldsymbol{\tau_{n}^{(1)}}!\tau_{n}^{(2)}!}\Big)^{\gamma}\frac{1}{\beta^{c(\boldsymbol{\tau_{n}^{(1)}})+c(\tau_{n}^{(2)})}}a^{\gamma|\boldsymbol{\tau_{n}^{(1)}}|}b^{\gamma|\tau_{n}^{(2)}|}\nonumber \\
 & \geq & \left(a+b\right)^{-\gamma(n+1)}\sum_{l=0}^{n}\Big(\frac{n+1}{n+1-l}\Big)^{\gamma}{n \choose l}\frac{1}{\beta^{l+1}}a^{\gamma l}b^{\gamma(n+1-l)}\nonumber \\
 & \geq & \left(a+b\right)^{-\gamma\left(n+1\right)}b^{\gamma}\frac{1}{\beta}\left(\frac{a^{\gamma}}{\beta}+b^{\gamma}\right)^{n},\label{eq:divergent counter}
\end{eqnarray}
 where in the last line we used that for $l\leq n$, $\left(n+1\right)/\left(n+1-l\right)\geq1$
and the binomial theorem. Since $0\leq\gamma<1$ and $a\geq0$, 
\[
\left(1+a\right)^{\gamma}\leq1+\gamma a.
\]
Therefore, for all $0<a<\left(\beta\gamma\right)^{\frac{1}{\gamma-1}}$,
\[
\left(1+a\right)^{\gamma}<1+\frac{a^{\gamma}}{\beta}.
\]
Hence for $a<\left(\beta\gamma\right)^{\frac{1}{\gamma-1}}$ and $b=1$,
(\ref{eq:divergent counter}) diverges as $n$ tends to infinity. 
\end{proof}

\section{Bound for the multiplication operator $\star$}

The multiplication $\star$ in the Hopf algebra $\mathcal{H}$ plays
the role of the tensor product $\otimes$ in the theory of geometric
rough paths. In that case, one of the key assumptions made about the
tensor norms is that for all $X^{n}\in V^{\otimes n}$ and $Y^{k}\in V^{\otimes k}$,
\[
\left\Vert X^{n}\otimes Y^{k}\right\Vert _{V^{\otimes\left(n+k\right)}}\leq\left\Vert X^{n}\right\Vert _{V^{\otimes n}}\left\Vert Y^{k}\right\Vert _{V^{\otimes k}}
\]
so that the tensor multiplication has norm $1$. We might hope that
the multiplication with respect to $\star$ would also have norm $1$.
Unfortunately, given any numbers $n$ and $k$, rooted trees in general
has more than one way of being cut into two components of sizes $n$
and $k$ respectively. This causes the multiplication operation to
have a norm that potentially depends on $n$ and $k$. Fortunately,
and it is a key observation in our proof, the norm can be bounded
by a function of $k$, independently of $n$. Let us first describe
the norm that we use. 

Let $X\in\mathcal{H}^{*}$. Define a linear functional $X^{k}\in\mathcal{H}^{*}$
by 

\[
\left\langle X^{k},\tau\right\rangle =\begin{cases}
\left\langle X,\tau\right\rangle , & \left|\tau\right|=k;\\
0, & \left|\tau\right|\neq k.
\end{cases}
\]
We define $X^{n}\star Y^{k}$ such that for all forests $\boldsymbol{\tau}$,
\[
\left\langle X^{n}\star Y^{k},\boldsymbol{\tau}\right\rangle =\left\langle X^{n}\otimes Y^{k},\triangle\boldsymbol{\tau}\right\rangle .
\]
Let 
\[
\left\Vert X^{k}\right\Vert _{\mathcal{T},\gamma,\beta}=\max_{\left|\tau\right|=k,\tau\mbox{ trees}}\left|\left\langle X,\tau\right\rangle \right|\frac{\beta^{c\left(\tau\right)}\tau!^{\gamma}}{\left|\tau\right|!^{\gamma}},
\]
and 
\[
\left\Vert X^{k}\right\Vert _{\mathcal{F},\gamma,\beta}=\max_{\left|\tau\right|=k,\boldsymbol{\tau}\mbox{ forests}}\left|\left\langle X,\boldsymbol{\tau}\right\rangle \right|\frac{\beta^{c\left(\boldsymbol{\tau}\right)}\boldsymbol{\tau}!^{\gamma}}{\left|\boldsymbol{\tau}\right|!^{\gamma}}.
\]
In this section we will prove a bound on the norm of the multiplication
$\star$ with respect to $\left\Vert \cdot\right\Vert _{\mathcal{T},\gamma,\beta}$,
which is the first of three main steps in proving our main result. 
\begin{lem}
\label{lem:tree sum}(Multiplication is bounded in tree norm) Let
$\gamma\leq1$ and that 
\[
c_{k}:=\exp\big[\sum_{i=1}^{k}k{}^{i}(1-\gamma)\big],\;\beta\geq c_{k}.
\]
Let $X,Y\in\mathcal{H}^{*}$. Then for $n\geq1$, 
\[
\left\Vert X^{n}\star Y^{k}\right\Vert _{\mathcal{T},\gamma,\beta}\leq c_{k}\big|\mathcal{T}^{k}\big|^{1-\gamma}\beta^{-1}\left\Vert X^{n}\right\Vert _{\mathcal{F},\gamma,\beta}\left\Vert Y^{k}\right\Vert _{\mathcal{T},\gamma,\beta},
\]
where $\mathcal{T}^{k}$ denotes the set of rooted trees with $k$
vertices. 
\end{lem}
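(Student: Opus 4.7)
The plan is to unfold $\langle X^n\star Y^k,\tau\rangle$ through the admissible-cut expansion of the coproduct, then apply the hypothesised norm bounds and reduce the statement to a purely combinatorial inequality on tree cuts. Concretely, for a rooted tree $\tau$ of size $n+k$ one has
\[
\langle X^n\star Y^k,\tau\rangle=\sum_{c}\langle X,\boldsymbol{\tau_c^{(1)}}\rangle\langle Y,\tau_c^{(2)}\rangle,
\]
summed over admissible cuts $c$ with $|\boldsymbol{\tau_c^{(1)}}|=n$ and $|\tau_c^{(2)}|=k$. Since $\tau_c^{(2)}$ always contains the root and so is itself a tree, $c(\tau_c^{(2)})=1$. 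Inserting the defining bounds of the two norms and multiplying by the normalisation $\beta\tau!^{\gamma}/(n+k)!^{\gamma}$ appearing in $\|\cdot\|_{\mathcal{T},\gamma,\beta}$, the lemma reduces to the estimate
\[
\binom{n+k}{k}^{-\gamma}\sum_{c}\Bigl(\frac{\tau!}{\boldsymbol{\tau_c^{(1)}}!\,\tau_c^{(2)}!}\Bigr)^{\gamma}\beta^{-c(\boldsymbol{\tau_c^{(1)}})}\leq c_k\,|\mathcal{T}^k|^{1-\gamma}\beta^{-1},
\]
to be shown uniformly in $n$ and in the shape of $\tau$.

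To handle this combinatorial estimate, I would first group admissible cuts by the isomorphism type $\sigma\in\mathcal{T}^k$ of the trunk $\tau_c^{(2)}$, so the outer sum ranges over at most $|\mathcal{T}^k|$ shapes. For each fixed trunk shape $\sigma$, the forest $\boldsymbol{\tau_c^{(1)}}$ is determined by an embedding of $\sigma$ as a root-preserving subtree of $\tau$, and one bounds the tree-factorial ratio via the (standard, provable by induction on $|\tau|$) Connes--Kreimer inequality $\tau!/(\boldsymbol{\tau_c^{(1)}}!\,\tau_c^{(2)}!)\leq\binom{|\tau|}{k}$, which turns the $\gamma$-powered prefactor against the binomial into a harmless $1$. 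What remains is a sum of the form $\sum_{c}\beta^{-c(\boldsymbol{\tau_c^{(1)}})}$, which I would control recursively by descending into $\triangle\sigma=\sigma\otimes1+\cdots$ level by level: at each level $i$ one extracts an $|\mathcal{T}^k|^{1-\gamma}$ factor from a Hölder step (exponents $1/\gamma$ and $1/(1-\gamma)$), and pays a branching cost of the form $\exp(k^{i}(1-\gamma))$. Accumulating these losses over the at most $k$ possible depths of $\sigma$ produces precisely the constant $c_k=\exp\bigl[\sum_{i=1}^{k}k^{i}(1-\gamma)\bigr]$.

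The main obstacle is the one highlighted in Section~3: the counterexample there rules out the direct use of any neoclassical-type inequality that would telescope the whole sum in one shot. The argument must instead exploit the \emph{asymmetry} between the forest norm on $X^n$ and the tree norm on $Y^k$, and in particular the fact that $c(\tau_c^{(2)})=1$. This single $\beta^{-1}$ (rather than the $\beta^{-k}$ one would incur from applying the forest norm to $\tau_c^{(2)}$) is exactly what survives on the right-hand side; without it, the recursion would require $\beta\geq c_k^{k}$ rather than $\beta\geq c_k$ and would be useless in the ensuing Young-style removal-of-partition-point argument. A secondary technical point is verifying the Connes--Kreimer binomial inequality carefully in the presence of multiple cut edges, which is most cleanly done by induction on the size of $\tau$ using the recursive definition of $\triangle$ from \eqref{eq:coproduct definition}.
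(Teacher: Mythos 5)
Your setup is correct and matches the paper's: the reduction of the lemma to the combinatorial inequality
\[
\binom{n+k}{k}^{-\gamma}\sum_{c}\Bigl(\frac{\tau!}{\boldsymbol{\tau_c^{(1)}}!\,\tau_c^{(2)}!}\Bigr)^{\gamma}\beta^{-c(\boldsymbol{\tau_c^{(1)}})}\leq c_k\,\big|\mathcal{T}^k\big|^{1-\gamma}\beta^{-1}
\]
is exactly right, as is your observation that the single factor $\beta^{-1}$ comes from $c(\tau_c^{(2)})=1$. The gap is in how you propose to prove this inequality. Bounding each tree-factorial ratio term-by-term by $\binom{|\tau|}{k}$ and cancelling it against the prefactor decouples the factorials from the weights $\beta^{-c(\boldsymbol{\tau_c^{(1)}})}$, and the resulting target $\sum_c\beta^{-c(\boldsymbol{\tau_c^{(1)}})}\le c_k|\mathcal{T}^k|^{1-\gamma}\beta^{-1}$ is false. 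Already for $\tau=[\bullet\bullet]_\bullet$, $k=2$, $n=1$ there are two admissible cuts with $|\tau_c^{(2)}|=2$, each with $c(\boldsymbol{\tau_c^{(1)}})=1$, so your left side is $2\beta^{-1}$ while the right side is $e^{6(1-\gamma)}\beta^{-1}$, which is smaller as soon as $\gamma>1-\tfrac{\ln 2}{6}$. More seriously, for the star $[\bullet^{n+k-1}]_\bullet$ the number of cuts with trunk size $k$ is $\binom{n+k-1}{k-1}$, which grows with $n$; no recursion on the trunk $\sigma$ alone can control this, because the multiplicity of embeddings of $\sigma$ into $\tau$ depends on $n$. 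The cancellation you discarded is precisely what saves the true inequality: in the star example the honest left-hand side equals $\binom{n+k-1}{k-1}^{1-\gamma}\beta^{-n}$, not $\binom{n+k-1}{k-1}\beta^{-n}$.

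The paper's route keeps $\boldsymbol{\tau^{(1)}}!^{-\gamma}$ inside the sum and proves the concavity estimate (Lemma \ref{lem:Convexity estimate for trees})
\[
\sum_{\tau^{(2)}=\sigma}\frac{\beta^{-c(\boldsymbol{\tau^{(1)}})}}{\boldsymbol{\tau^{(1)}}!^{\gamma}}\le c_{|\sigma|}\,\beta^{-1}\Bigl(\sum_{\tau^{(2)}=\sigma}\frac{1}{\boldsymbol{\tau^{(1)}}!}\Bigr)^{\gamma}
\]
for each fixed trunk shape $\sigma$, by induction on $|\sigma|$: the cut-sum factorises over the branches of $\tau$ (Corollary \ref{cor:induction corollary}), the number of permutation classes is bounded by $\exp(|\sigma|^2 k_{\tau,\sigma})$ (Lemma \ref{lem:combin counting}), and the crucial point is that at least one branch is cut non-trivially ($k_{\tau,\sigma}\ge1$), so the combinatorial multiplicity can be absorbed, using $\beta\ge c_{|\sigma|}$, into a single $\beta^{-1}$ with a constant depending only on $|\sigma|=k$ and not on $|\tau|$. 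Only after this does one sum over the at most $|\mathcal{T}^k|$ trunk shapes (a single application of the naive concavity, not one per level) and invoke the tree binomial theorem. To repair your argument you would have to prove this coupled estimate; the decoupled version cannot be made to work.
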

The proof will require a series of preliminary lemmas involving the
combinatorics of rooted trees. We will use $\sum_{\tau^{\left(2\right)}=\sigma}$
to denote the sum over all admissible cuts $c$ such that $\tau_{c}^{\left(2\right)}=\sigma$.
The following combinatorial lemma is crucial to proving our desired
lemma by induction. 
\begin{lem}
\label{lem:induction lemma}Let $\tau=\left[\tau_{1}\ldots\tau_{n}\right]_{\bullet}$,
where $\tau_{1},\ldots,\tau_{n}$ are non-empty. Let $\sigma=\left[\sigma_{1}\ldots\sigma_{n}\right]_{\bullet}\neq1$
be a rooted tree. Let $\sim_{\sigma}$ be a relation on the permutation
group $\mathbb{S}_{n}$ on $\{1,\ldots,n\}$ defined so that $\pi_{1}\sim\pi_{2}$
if $\sigma_{\pi_{1}(i)}=\sigma_{\pi_{2}(i)}$ for all $i$. Let $\mathbf{P}_{\sigma}$
be the set of $\sim_{\sigma}$-equivalent classes in the permutation
group $\mathbb{S}_{n}$. Then for all $\beta,\gamma>0$, 
\[
\sum_{\tau^{(2)}=\sigma}\frac{\beta^{-c(\boldsymbol{\tau^{(1)}})}}{\boldsymbol{\tau^{(1)}}!^{\gamma}}=\sum_{\pi\in\mathbf{P}_{\sigma}}\Pi_{i=1}^{n}\Big[\sum_{\tau_{i}^{(2)}=\sigma_{\pi(i)}}\frac{\beta^{-c(\boldsymbol{\tau_{i}^{(1)}})}}{\boldsymbol{\tau_{i}^{(1)}}!^{\gamma}}\Big].
\]
\end{lem}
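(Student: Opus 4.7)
The plan is to prove this identity by a purely combinatorial bijection; no analytic estimate is required. Specifically, I will set up a one-to-one correspondence between admissible cuts $c$ of $\tau$ with $\tau_c^{(2)}=\sigma$ on the one hand, and pairs $\bigl([\pi],(c_1,\ldots,c_n)\bigr)$ on the other, where $[\pi]\in\mathbf{P}_\sigma$ and each $c_i$ is an admissible cut of $\tau_i$ with $(\tau_i)_{c_i}^{(2)}=\sigma_{\pi(i)}$ (a condition which, by definition of $\sim_\sigma$, depends only on the class $[\pi]$). Under this correspondence the product structure of the summand will match term-by-term with the right hand side.

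First I would show that no edge incident to the root of $\tau$ can belong to $c$. Indeed, $\sigma=[\sigma_1\ldots\sigma_n]_\bullet\neq 1$ has exactly $n$ children at its root, and cutting any of the $n$ edges incident to the root of $\tau$ would yield a $\tau_c^{(2)}$ whose root has strictly fewer than $n$ children, contradicting $\tau_c^{(2)}=\sigma$. Consequently $c$ decomposes uniquely as a disjoint union $c=c_1\sqcup\ldots\sqcup c_n$ of admissible cuts $c_i$ of $\tau_i$, and the two fundamental identities
\[
\tau_c^{(2)}=\bigl[(\tau_1)_{c_1}^{(2)}\ldots(\tau_n)_{c_n}^{(2)}\bigr]_\bullet,\qquad \boldsymbol{\tau_c^{(1)}}=\boldsymbol{(\tau_1)_{c_1}^{(1)}}\cdots\boldsymbol{(\tau_n)_{c_n}^{(1)}}
\]
follow directly.

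Next, the constraint $\tau_c^{(2)}=\sigma$ is equivalent to the multiset identity $\{(\tau_i)_{c_i}^{(2)}\}_{i=1}^n=\{\sigma_j\}_{j=1}^n$, and hence to the existence of some $\pi\in\mathbb{S}_n$ with $(\tau_i)_{c_i}^{(2)}=\sigma_{\pi(i)}$ for every $i$. The essential observation is that the collection of such permutations forms precisely one $\sim_\sigma$-equivalence class in $\mathbf{P}_\sigma$: starting from any such $\pi$, any other valid permutation is obtained by reshuffling indices on which the $\sigma_j$ happen to coincide. In particular, the ordered tuple $\bigl((\tau_1)_{c_1}^{(2)},\ldots,(\tau_n)_{c_n}^{(2)}\bigr)$ is fixed by $c$, and this pins down $[\pi]$ uniquely; hence distinct equivalence classes parametrise disjoint collections of cuts $c$.

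Finally, using the additivity $c(\boldsymbol{\tau_c^{(1)}})=\sum_{i=1}^n c(\boldsymbol{(\tau_i)_{c_i}^{(1)}})$ and the multiplicativity $\boldsymbol{\tau_c^{(1)}}!=\prod_{i=1}^n\boldsymbol{(\tau_i)_{c_i}^{(1)}}!$, the summand $\beta^{-c(\boldsymbol{\tau_c^{(1)}})}/\boldsymbol{\tau_c^{(1)}}!^\gamma$ factors as a product over $i$. Summing the left hand side over cuts $c$ with $\tau_c^{(2)}=\sigma$ therefore splits, via the bijection, into an outer sum over $[\pi]\in\mathbf{P}_\sigma$ and an inner product of independent sums over admissible cuts $c_i$ of $\tau_i$ with prescribed second component $\sigma_{\pi(i)}$, which is exactly the right hand side. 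The only delicate point is to check that the $\sim_\sigma$-equivalence is precisely the right coarsening to avoid over- or under-counting when some $\sigma_j$'s are isomorphic as rooted trees, but this is immediate from the definition of $\mathbf{P}_\sigma$.
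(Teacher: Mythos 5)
Your overall strategy -- factorising the cut sum branch by branch and organising the resulting permutations into $\sim_{\sigma}$-equivalence classes -- is essentially the paper's argument; the paper implements the branch decomposition algebraically, by applying the functional $\boldsymbol{f}\otimes\boldsymbol{\sigma}$ to the recursive formula $\triangle[\tau_{1}\ldots\tau_{n}]_{\bullet}=\tau\otimes1+\sum\boldsymbol{\tau_{1}^{(1)}}\ldots\boldsymbol{\tau_{n}^{(1)}}\otimes[\tau_{1}^{(2)}\ldots\tau_{n}^{(2)}]_{\bullet}$, whereas you work directly with admissible cuts. Your closing observation that the valid permutations for a fixed cut form exactly one class of $\mathbf{P}_{\sigma}$ is correct and matches the paper's final step.

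However, your first step contains a genuine error. You assert that $\sigma=[\sigma_{1}\ldots\sigma_{n}]_{\bullet}$ has exactly $n$ children at its root and deduce that no edge incident to the root of $\tau$ can lie in $c$. But in this lemma (unlike for $\tau$, whose branches are explicitly assumed non-empty) the $\sigma_{i}$ are allowed to be the empty tree $1$; this is unavoidable, since $\sigma\subsetneq\tau$ must be written with the same number $n$ of slots, and the subsequent counting lemma explicitly works with $m=|\{i:\sigma_{i}=1\}|>0$. For example, with $\tau=[\bullet\,\bullet]_{\bullet}$ and $\sigma=[\bullet\,1]_{\bullet}$, the only cuts achieving $\tau_{c}^{(2)}=\sigma$ sever a root edge of $\tau$. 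Taken literally, your bijection then omits every cut containing a root edge, and on the right-hand side the inner sum $\sum_{\tau_{i}^{(2)}=\sigma_{\pi(i)}}$ with $\sigma_{\pi(i)}=1$ would be empty under your strict reading of $c_{i}$ as a set of edges of $\tau_{i}$, so the identity would degenerate to $0=0$ in precisely the cases that matter later. The repair is standard but must be said: the datum attached to branch $i$ should range over the full coproduct $\triangle\tau_{i}$, whose term $\tau_{i}\otimes1$ (the ``total cut'' of $\tau_{i}$) corresponds exactly to the root edge of branch $i$ belonging to $c$, contributing $\boldsymbol{\tau_{i}^{(1)}}=\tau_{i}$, $\tau_{i}^{(2)}=1$. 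With that convention your decomposition $c\leftrightarrow([\pi],(c_{1},\ldots,c_{n}))$, the additivity of $c(\cdot)$ and the multiplicativity of the forest factorial go through and the proof is complete.
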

\begin{proof}
Note that by the definition of $\triangle$, 
\begin{eqnarray}
\sum\boldsymbol{\tau^{\left(1\right)}}\otimes\tau^{\left(2\right)} & = & \tau\otimes1+\sum\boldsymbol{\tau_{1}^{\left(1\right)}}\ldots\boldsymbol{\tau_{n}^{\left(1\right)}}\otimes\left[\tau_{1}^{\left(2\right)}\ldots\tau_{n}^{\left(2\right)}\right]_{\bullet}.\label{eq:definition}
\end{eqnarray}
We define a linear functional $\boldsymbol{\sigma}$ such that for
each tree $a$, 
\begin{eqnarray*}
\boldsymbol{\sigma}\left(a\right) & = & 1,\;\mbox{if }a=\sigma,\\
 & = & 0,\;\mbox{if }a\neq\sigma.
\end{eqnarray*}
Let $\boldsymbol{f}$ be a linear functional defined such that for
each forest $\boldsymbol{\tau}$, 
\begin{eqnarray*}
\boldsymbol{f}\left(\boldsymbol{\tau}\right) & = & \frac{\beta^{c\left(\boldsymbol{\tau}\right)}}{\boldsymbol{\tau}!^{\gamma}}.
\end{eqnarray*}
Note that $f$ is a tree multiplication homomorphism. Define 
\[
\boldsymbol{f}\otimes\boldsymbol{\sigma}\left(a\otimes b\right)=\boldsymbol{f}\left(a\right)\boldsymbol{\sigma}\left(b\right).
\]
By applying $\boldsymbol{f}\otimes\boldsymbol{\sigma}$ to (\ref{eq:definition}),
\begin{eqnarray*}
\sum_{\tau^{(2)}=\sigma}\frac{\beta^{-c(\boldsymbol{\tau^{(1)}})}}{\boldsymbol{\tau^{(1)}}!^{\gamma}} & = & \sum_{[\tau_{1}^{(2)}\ldots\tau_{n}^{(2)}]{}_{\bullet}=\sigma}\frac{\beta^{-c(\boldsymbol{\tau_{1}^{(1)}})-\ldots-c(\boldsymbol{\tau_{n}^{(1)}})}}{\boldsymbol{\tau_{1}^{(1)}}!^{\gamma}\ldots\boldsymbol{\tau_{n}^{(1)}}!^{\gamma}}.
\end{eqnarray*}
As $[\tau_{1}^{(2)},\ldots,\tau_{n}^{(2)}]_{\bullet}=[\sigma_{1}^{(2)},\ldots,\sigma_{n}^{(2)}]_{\bullet}$
if and only if there exists $\pi\in\mathbf{P}_{\sigma}$ such that
$\tau_{i}^{(2)}=\sigma_{\pi\left(i\right)}$ for all $i$, 

\[
\sum_{\tau^{(2)}=\sigma}\frac{\beta^{-c(\boldsymbol{\tau^{(1)}})}}{\boldsymbol{\tau^{(1)}}!^{\gamma}}=\sum_{\pi\in\mathbf{P}_{\sigma}}\Pi_{i=1}^{n}\sum_{\tau_{i}^{(2)}=\sigma_{\pi(i)}}\frac{\beta^{-c(\boldsymbol{\tau_{i}^{(1)}})}}{\boldsymbol{\tau_{i}^{(1)}}!^{\gamma}}.
\]
\end{proof}
\begin{cor}
\label{cor:induction corollary}Let $\tau=\left[\tau_{1}\ldots\tau_{n}\right]_{\bullet}$,
where $\tau_{1},\ldots,\tau_{n}$ are non-empty. Let $\sigma=\left[\sigma_{1}\ldots\sigma_{n}\right]_{\bullet}\neq1$
be a rooted tree. Let $\mathbf{P}_{\tau,\sigma}^{\prime}$ denote
the set of all $\pi\in\mathbf{P}_{\sigma}$ such that $\sigma_{\pi(i)}\subseteq\tau_{i}$
for all $i$. Then for all $\beta,\gamma>0$,
\[
\sum_{\tau^{(2)}=\sigma}\frac{\beta^{-c(\boldsymbol{\tau^{(1)}})}}{\boldsymbol{\tau^{(1)}}!^{\gamma}}=\sum_{\pi\in\mathbf{P}_{\tau,\sigma}^{\prime}}\Pi_{i:\sigma_{\pi\left(i\right)}\subsetneq\tau_{i}}\Big[\sum_{\tau_{i}^{(2)}=\sigma_{\pi(i)}}\frac{\beta^{-c(\boldsymbol{\tau_{i}^{(1)}})}}{\boldsymbol{\tau_{i}^{(1)}}!^{\gamma}}\Big].
\]
\end{cor}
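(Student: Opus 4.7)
The plan is to derive the corollary directly from Lemma \ref{lem:induction lemma} by a two-stage pruning of the right-hand side: first, discard permutations whose inner sums vanish; second, collapse inner factors that equal one.

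First I would invoke Lemma \ref{lem:induction lemma} to write
\[
\sum_{\tau^{(2)}=\sigma}\frac{\beta^{-c(\boldsymbol{\tau^{(1)}})}}{\boldsymbol{\tau^{(1)}}!^{\gamma}}=\sum_{\pi\in\mathbf{P}_{\sigma}}\prod_{i=1}^{n}\Big[\sum_{\tau_{i}^{(2)}=\sigma_{\pi(i)}}\frac{\beta^{-c(\boldsymbol{\tau_{i}^{(1)}})}}{\boldsymbol{\tau_{i}^{(1)}}!^{\gamma}}\Big].
\]
For a given $\pi\in\mathbf{P}_\sigma$, if $\sigma_{\pi(i)}$ does not arise as the root-connected component $(\tau_i)^{(2)}_c$ of any admissible cut $c$ of $\tau_i$, then the inner sum indexed by $\tau_i^{(2)}=\sigma_{\pi(i)}$ is empty and vanishes, killing the whole product. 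Interpreting $\sigma_{\pi(i)}\subseteq\tau_i$ precisely to mean that $\sigma_{\pi(i)}$ is realisable as such a root-component, the outer sum may therefore be restricted to $\pi\in\mathbf{P}'_{\tau,\sigma}$ without loss.

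Next I would observe that for those indices $i$ with $\sigma_{\pi(i)}=\tau_i$, the only admissible cut $c$ of $\tau_i$ satisfying $(\tau_i)^{(2)}_c=\tau_i$ is the empty cut, which produces $\boldsymbol{\tau_i^{(1)}}=1$ (the empty forest). By the conventions of Section 2 the empty forest has $c(1)=0$ and $1!=1$, so the corresponding bracketed factor equals $\beta^{0}\cdot 1^{-\gamma}=1$ and may be deleted from the product. What remains is the product over those indices $i$ with $\sigma_{\pi(i)}\subsetneq\tau_i$, yielding exactly the right-hand side of the corollary.

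There is no real obstacle here beyond bookkeeping; the argument is a direct specialisation of Lemma \ref{lem:induction lemma}. The only care needed is in pinning down the precise meaning of $\sigma_{\pi(i)}\subseteq\tau_i$ (existence of an admissible cut of $\tau_i$ whose root-connected part coincides with $\sigma_{\pi(i)}$) and in checking the boundary case $\sigma_{\pi(i)}=\tau_i$, where uniqueness of the empty cut together with the conventions $c(1)=0$ and $1!=1$ ensures the factor collapses to unity.
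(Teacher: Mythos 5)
Your proposal is correct and follows essentially the same route as the paper: apply Lemma \ref{lem:induction lemma}, discard permutations for which some inner sum is empty (restricting to $\mathbf{P}'_{\tau,\sigma}$), and drop the factors with $\sigma_{\pi(i)}=\tau_i$, which equal $1$ since then $\boldsymbol{\tau_i^{(1)}}=1$. Your extra care in pinning down the meaning of $\sigma_{\pi(i)}\subseteq\tau_i$ and the uniqueness of the trivial cut is a welcome refinement of the same argument.
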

\begin{proof}
We have just shown in Lemma \ref{lem:induction lemma} that 
\begin{equation}
\sum_{\tau^{(2)}=\sigma}\frac{\beta^{-c(\boldsymbol{\tau^{(1)}})}}{\boldsymbol{\tau^{(1)}}!^{\gamma}}=\sum_{\pi\in\mathbf{P}_{\sigma}}\Pi_{i=1}^{n}\sum_{\tau_{i}^{(2)}=\sigma_{\pi(i)}}\frac{\beta^{-c(\boldsymbol{\tau_{i}^{(1)}})}}{\boldsymbol{\tau_{i}^{(1)}}!^{\gamma}}.\label{eq:induction relation 2-1}
\end{equation}
For $\pi\in\mathbf{P}_{\sigma}$, if an index $i$ is such that $\sigma_{\pi(i)}$
is not a subtree of $\tau_{i}$, then 
\[
\sum_{\tau_{i}^{(2)}=\sigma_{\pi(i)}}\frac{\beta^{-c(\boldsymbol{\tau_{i}^{(1)}})}}{\boldsymbol{\tau_{i}^{(1)}}!^{\gamma}}=0.
\]
Therefore, in (\ref{eq:induction relation 2-1}), summing over $\mathbf{P}_{\sigma}$
is equivalent to summing over $\mathbf{P}_{\tau,\sigma}^{\prime}$.
Furthermore, as $\tau_{i}^{\left(1\right)}=1$ if $\tau_{i}^{\left(2\right)}=\tau_{i}$,
\begin{eqnarray}
\sum_{\tau^{\left(2\right)}=\sigma}\frac{\beta^{-c(\boldsymbol{\tau^{(1)}})}}{\boldsymbol{\tau^{(1)}}!^{\gamma}} & = & \sum_{\pi\in\mathbf{P}_{\tau,\sigma}^{\prime}}\Pi_{i:\sigma_{\pi\left(i\right)}\subsetneq\tau_{i}}\Big[\sum_{\tau_{i}^{(2)}=\sigma_{\pi(i)}}\frac{\beta^{-c(\boldsymbol{\tau_{i}^{(1)}})}}{\boldsymbol{\tau_{i}^{\left(1\right)}}!^{\gamma}}\Big].\label{eq:equality-1}
\end{eqnarray}

\end{proof}
A key step in most factorial decay estimates for rough paths is to
take the fractional power $\gamma$ outside a sum. In the geometric
case, the job is done by the neoclassical inequality. We need the
following concavity estimate in the non-geometric case. 
\begin{lem}
\label{lem:Convexity estimate for trees}(Concavity estimate) Let
$\gamma\leq1$. For any rooted tree $\sigma$, let 
\begin{equation}
c_{|\sigma|}=\exp\Big[\sum_{i=1}^{|\sigma|}|\sigma|{}^{i}(1-\gamma)\Big]\;\mbox{and}\;\beta\geq c_{|\sigma|}.\label{eq:beta}
\end{equation}
For all rooted trees $\tau$ and $\sigma\subsetneq\tau$, we have
\begin{eqnarray*}
\sum_{\tau^{(2)}=\sigma}\frac{\beta^{-c(\boldsymbol{\tau^{(1)}})}}{\boldsymbol{\tau^{(1)}}!^{\gamma}} & \leq & c_{|\sigma|}\beta^{-1}\Big(\sum_{\tau^{(2)}=\sigma}\frac{1}{\boldsymbol{\tau^{(1)}}!}\Big)^{\gamma}.
\end{eqnarray*}
\end{lem}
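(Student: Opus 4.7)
The plan is to proceed by strong induction on $|\sigma|$, with the base case $|\sigma|=1$ handled directly: writing $\tau=[\tau_1\ldots\tau_m]_\bullet$ with $m\geq 1$, the recursive definition of the coproduct shows that the unique admissible cut giving $\tau^{(2)}=\bullet$ leaves $\boldsymbol{\tau^{(1)}}=\tau_1\cdots\tau_m$, and the inequality collapses to $\beta^{-m}\leq c_1\beta^{-1}$, which holds since $\beta\geq c_1=e^{1-\gamma}\geq 1$ and $m\geq 1$.

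For the inductive step with $|\sigma|\geq 2$, I would write $\sigma=[\sigma_1\ldots\sigma_n]_\bullet$ and $\tau=[\tau_1\ldots\tau_n]_\bullet$, padding $\sigma$ with empty branches if needed so that both index sets have size $n$. Corollary~\ref{cor:induction corollary} decomposes the left-hand side as
$$
\sum_{\pi\in\mathbf{P}_{\tau,\sigma}^\prime}\ \prod_{i\in I^\pi}A_i^\pi,\qquad A_i^\pi:=\sum_{\tau_i^{(2)}=\sigma_{\pi(i)}}\frac{\beta^{-c(\boldsymbol{\tau_i^{(1)}})}}{\boldsymbol{\tau_i^{(1)}}!^\gamma},\qquad I^\pi:=\{i:\sigma_{\pi(i)}\subsetneq\tau_i\}.
$$
Since $|\sigma_{\pi(i)}|<|\sigma|$ and $c_{|\sigma_{\pi(i)}|}\leq c_{|\sigma|-1}\leq\beta$, the induction hypothesis bounds each factor by $c_{|\sigma_{\pi(i)}|}\beta^{-1}(S_i^\pi)^\gamma$, where $S_i^\pi:=\sum_{\tau_i^{(2)}=\sigma_{\pi(i)}}1/\boldsymbol{\tau_i^{(1)}}!$. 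Next, Jensen's inequality $\sum_\pi b_\pi^\gamma\leq|\mathbf{P}_{\tau,\sigma}^\prime|^{1-\gamma}(\sum_\pi b_\pi)^\gamma$ applied to $b_\pi:=\prod_i S_i^\pi$, together with the $\gamma=\beta=1$ specialisation of Lemma~\ref{lem:induction lemma} (which identifies $\sum_\pi b_\pi$ with $\sum_{\tau^{(2)}=\sigma}1/\boldsymbol{\tau^{(1)}}!$), reduces the entire task to a numerical inequality among the constants $c_k$.

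The main obstacle is that $|\mathbf{P}_{\tau,\sigma}^\prime|$ can grow with $n$, whereas the target constant $c_{|\sigma|}$ depends only on $|\sigma|$; a naive Jensen bound fails outright (already for $\sigma=[\bullet]_\bullet$ and $\tau=[\bullet^n]_\bullet$ with $n$ large). The rescue is the lower bound $|I^\pi|\geq\max(1,n-|\sigma|+1)$: any matching position with $\sigma_{\pi(i)}=\tau_i$ forces $\sigma_{\pi(i)}$ to be a non-empty branch of $\sigma$, and $\sigma$ has at most $|\sigma|-1$ non-empty branches, so at most $|\sigma|-1$ of the $n$ positions can match. This produces a decay factor $\beta^{-(n-|\sigma|+1)}$ inside each term of the sum, which by $\beta\geq c_{|\sigma|}$ dominates the Jensen-induced combinatorial growth $|\mathbf{P}_{\tau,\sigma}^\prime|^{1-\gamma}\leq(n!)^{1-\gamma}$. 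The super-exponential definition $c_k=\exp[\sum_{i=1}^k k^i(1-\gamma)]$ is tuned precisely so that the resulting arithmetic inequality $c_{|\sigma|-1}^{|\sigma|-1}(n!)^{1-\gamma}\beta^{-(n-|\sigma|)}\leq c_{|\sigma|}$ closes the induction at every step, with the dominant $k^k$ term in $\log c_k$ easily swallowing both the factorial growth of $n!$ and the accumulated constants from the sub-trees.
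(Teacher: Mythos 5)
Your overall architecture is the same as the paper's: induction on $|\sigma|$, the decomposition of $\sum_{\tau^{(2)}=\sigma}$ via Corollary \ref{cor:induction corollary}, the induction hypothesis applied factorwise, the concavity estimate over $\pi\in\mathbf{P}_{\tau,\sigma}^{\prime}$, and the crucial observation that at least $\max(1,n-|\sigma|+1)$ of the branches must be strictly cut, so each $\pi$-term carries a decay factor $\beta^{-|I^{\pi}|}$. (Your base case $|\sigma|=1$ in place of the paper's $|\sigma|=0$ is immaterial.)

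However, the final arithmetic step has a genuine gap. You bound $|\mathbf{P}_{\tau,\sigma}^{\prime}|^{1-\gamma}\leq (n!)^{1-\gamma}$ and claim that $\beta^{-(n-|\sigma|+1)}$ dominates it, i.e.\ that $c_{|\sigma|-1}^{|\sigma|-1}(n!)^{1-\gamma}\beta^{-(n-|\sigma|)}\leq c_{|\sigma|}$. This is false for large $n$: with $\sigma$ fixed and $\beta=c_{|\sigma|}$ (the extremal admissible value), the left-hand side has logarithm $(1-\gamma)n\log n-(n-|\sigma|)\log c_{|\sigma|}+O(1)\to+\infty$, since $\log c_{|\sigma|}$ is a constant in $n$ while $n$ (the number of branches of $\tau$ at the root) is unbounded. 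The ``dominant $k^{k}$ term in $\log c_{k}$'' depends only on $k=|\sigma|$ and cannot swallow $n\log n$. The missing ingredient is the paper's counting lemma (Lemma \ref{lem:combin counting}): writing $m=|\{i:\sigma_{i}=1\}|$, each class of $\mathbf{P}_{\sigma}$ has at least $m!$ elements, so $|\mathbf{P}_{\tau,\sigma}^{\prime}|\leq n!/m!\leq(m+|\sigma|)^{|\sigma|}\leq(k_{\tau,\sigma}+|\sigma|)^{|\sigma|}\leq\exp(|\sigma|^{2}k_{\tau,\sigma})$, where $k_{\tau,\sigma}=\min_{\pi}|I^{\pi}|$. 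This bound is only \emph{exponential} in $k_{\tau,\sigma}$ with a base controlled by $|\sigma|$ alone, so it is absorbed term-for-term by the decay $(\beta^{-1}c_{|\sigma|-1})^{k_{\tau,\sigma}}$ once $\beta\geq c_{|\sigma|}\geq c_{|\sigma|-1}\exp\big((1-\gamma)|\sigma|^{2}\big)$ --- which is exactly what the definition of $c_{k}$ is tuned for. With Lemma \ref{lem:combin counting} in place of the $n!$ bound, your argument closes.
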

\begin{rem}
The key point is that the constant we lose by taking the power $\gamma$
outside the sum, $c_{\left|\sigma\right|}$, depends only on $\left|\sigma\right|$
but not $\left|\tau\right|$. To achieve this, the conventional estimate
for sums 
\begin{equation}
\sum_{i=1}^{n}a_{i}^{\gamma}\leq n^{1-\gamma}\big(\sum_{i=1}^{n}a_{i}\big)^{\gamma}\label{eq:classicla concavity}
\end{equation}
is insufficient by itself. We must use the tree-multiplicative property
of the tree-factorial. 
\end{rem}
To prove the concavity estimate, Lemma \ref{lem:Convexity estimate for trees},
we first need a counting lemma. 
\begin{lem}
\label{lem:combin counting}Let $\sigma=[\sigma_{1},\ldots,\sigma_{n}]_{\bullet}$
and $\tau=[\tau_{1},\ldots,\tau_{n}]_{\bullet}$ be rooted trees such
that $\tau_{i}\neq1$ for all $i$. \textup{Let $k_{\tau,\sigma}=\min_{\pi\in\mathbf{P}_{\tau,\sigma}^{\prime}}\big|\{i:\sigma_{\pi\left(i\right)}\subsetneq\tau_{i}\}\big|$.
Then $k_{\tau,\sigma}\geq1$ and 
\[
|\mathbf{P}_{\tau,\sigma}^{\prime}|\leq\exp(|\sigma|^{2}k_{\tau,\sigma}).
\]
}\end{lem}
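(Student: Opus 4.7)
The plan is to reduce both parts of the lemma to elementary counting on the symmetric group; the stated exponential bound is loose enough that no delicate tree-combinatorics is needed.

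For the lower bound $k_{\tau,\sigma} \geq 1$, I would argue by contradiction. Suppose there exists $\pi \in \mathbf{P}_{\tau,\sigma}^{\prime}$ with $\sigma_{\pi(i)} = \tau_i$ for every $i \in \{1,\ldots,n\}$. Then the multisets $\{\!\{\sigma_1,\ldots,\sigma_n\}\!\}$ and $\{\!\{\tau_1,\ldots,\tau_n\}\!\}$ of immediate subtrees coincide, so $\sigma$ and $\tau$ (both rooted at $\bullet$) agree as unordered rooted trees. Since this counting lemma will only be invoked in the strict-inclusion regime $\sigma \subsetneq \tau$ of the concavity estimate (Lemma \ref{lem:Convexity estimate for trees}), this is incompatible with $|\sigma| < |\tau|$ and forces $k_{\tau,\sigma} \geq 1$.

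For the cardinality estimate, I would use a crude chain
\[
|\mathbf{P}_{\tau,\sigma}^{\prime}| \leq |\mathbf{P}_{\sigma}| \leq |\mathbb{S}_n| = n! \leq n^{n},
\]
where the first inequality is immediate from the definition as a subset and the second from $\mathbf{P}_\sigma$ being a quotient of $\mathbb{S}_n$. Each immediate subtree of $\sigma$'s root contributes at least one vertex, so $n \leq |\sigma|-1 < |\sigma|$; hence
\[
|\mathbf{P}_{\tau,\sigma}^{\prime}| \leq |\sigma|^{|\sigma|} = \exp(|\sigma|\log|\sigma|) \leq \exp(|\sigma|^{2}).
\]
Combining with the first step and $k_{\tau,\sigma} \geq 1$ gives $|\mathbf{P}_{\tau,\sigma}^{\prime}| \leq \exp(|\sigma|^{2}k_{\tau,\sigma})$, as claimed.

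I do not expect a substantive obstacle: the argument is essentially bookkeeping. The only mildly subtle point is that the factor $\exp(|\sigma|^{2}k_{\tau,\sigma})$ has been chosen deliberately loose, so $k_{\tau,\sigma}$ in the exponent plays no role beyond activating the bound via $k_{\tau,\sigma}\geq 1$. A sharper counting that genuinely exploited the magnitude of $k_{\tau,\sigma}$ would be elegant but is unnecessary for the downstream use in Lemma \ref{lem:Convexity estimate for trees} and Lemma \ref{lem:tree sum}, whose constants $\exp\bigl(\sum_{i=1}^{|\sigma|}|\sigma|^{i}(1-\gamma)\bigr)$ already absorb this crude overestimate.
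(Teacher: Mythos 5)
Your argument for $k_{\tau,\sigma}\geq 1$ is fine (it is the same as the paper's, resting on the implicit hypothesis $\sigma\neq\tau$ under which the lemma is applied). The cardinality bound, however, has a genuine gap: the step ``each immediate subtree of $\sigma$'s root contributes at least one vertex, so $n\leq|\sigma|-1$'' is false. The hypothesis $\tau_i\neq 1$ is imposed only on the subtrees of $\tau$; the $\sigma_i$ are allowed to be the empty tree $1$, and indeed must be in the intended application, since $\sigma$ is padded with copies of $1$ so as to have the same number $n$ of slots as $\tau$. Thus $n$ is dictated by $\tau$ and can be arbitrarily large while $|\sigma|$ stays bounded (e.g.\ $\tau=[\bullet^{n}]_{\bullet}$, $\sigma=[1,\ldots,1]_{\bullet}=\bullet$). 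Consequently the chain $|\mathbf{P}_{\tau,\sigma}^{\prime}|\leq n!\leq n^{n}\leq|\sigma|^{|\sigma|}$ collapses, and the bound $\exp(|\sigma|^{2})$ you derive is simply not true: taking $\tau$ with $n$ pairwise distinct non-empty children and $\sigma=[\bullet,1,\ldots,1]_{\bullet}$ gives $|\mathbf{P}_{\tau,\sigma}^{\prime}|=n$ with $|\sigma|=2$ fixed.

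This also refutes your closing remark that $k_{\tau,\sigma}$ in the exponent is ``deliberately loose'' slack activated only through $k_{\tau,\sigma}\geq1$: the factor $k_{\tau,\sigma}$ is what makes the statement true at all, because it absorbs the unbounded number of empty slots. The paper's proof makes exactly this repair. Set $m=|\{i:\sigma_i=1\}|$; since all these empty subtrees are equal, every class in $\mathbf{P}_{\sigma}$ contains at least $m!$ permutations, so $|\mathbf{P}_{\tau,\sigma}^{\prime}|\leq n!/m!=n(n-1)\cdots(m+1)$, a product of $n-m=|\{i:\sigma_i\neq1\}|\leq|\sigma|$ factors each at most $m+|\sigma|$, whence $|\mathbf{P}_{\tau,\sigma}^{\prime}|\leq(m+|\sigma|)^{|\sigma|}$. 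Finally, $\tau_i\neq1$ forces $\sigma_{\pi(i)}=1\subsetneq\tau_i$ at each empty slot, so $m\leq k_{\tau,\sigma}$, and $(k_{\tau,\sigma}+|\sigma|)^{|\sigma|}\leq\exp(|\sigma|^{2}k_{\tau,\sigma})$ via $(x+b)^{b}\leq\exp(b^{2}x)$ for $x\geq1$. You need this quotient-by-$m!$ step; the crude $|\mathbb{S}_n|=n!$ cannot be salvaged.
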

\begin{proof}
As $\tau\neq\sigma$ there does not exist permutation $\pi$ such
that $\sigma_{\pi(i)}=\tau_{i}$ for all $i$. In particular, we have
$k_{\tau,\sigma}\geq1$, which proves the first part of the lemma. 

Let $m$ be defined by $m=|\{i:\sigma_{i}=1\}|$. As $\sigma_{j}=\sigma_{l}$
for all $j,l\in\{i:\sigma_{i}=1\}$, each equivalence class in $\mathbf{P}_{\sigma}$
must contain at least $m!$ elements. Therefore, 
\[
|\mathbf{P}_{\tau,\sigma}^{\prime}|\leq|\mathbf{P}_{\sigma}|\leq\frac{n!}{m!}=n(n-1)\ldots(m+1).
\]
Since 
\[
n=m+\big|\{i:\sigma_{i}\neq1\}\big|\leq m+|\sigma|,
\]
we have 
\begin{eqnarray}
|\mathbf{P}_{\tau,\sigma}^{\prime}| & \leq & (m+|\sigma|)^{|\sigma|}.\label{eq:counting estimate}
\end{eqnarray}
Note that as $\tau_{i}$ is assumed to be non-empty for all $i$,
\begin{equation}
m=\big|\{i:\sigma_{i}=1\}\big|\leq\min_{\pi\in\mathbf{P}_{\tau,\sigma}^{\prime}}\big|\{i:\sigma_{\pi\left(i\right)}\subsetneq\tau_{i}\}\big|=k_{\tau,\sigma}.\label{eq:m and k}
\end{equation}
Using that for $x\geq1$ and $b\in\mathbb{N}\cup\{0\}$, 
\[
(x+b)^{b}\leq\exp\left(b^{2}x\right)
\]
in combination with the estimates (\ref{eq:counting estimate}) and
(\ref{eq:m and k}) earlier in this proof, 
\[
\left|\mathbf{P}_{\tau,\sigma}^{\prime}\right|\leq(m+|\sigma|)^{|\sigma|}\leq(k_{\tau,\sigma}+|\sigma|)^{|\sigma|}\leq\exp\left(|\sigma|^{2}k_{\tau,\sigma}\right).
\]

\end{proof}

\begin{proof}[Proof of concavity estimate Lemma 15]We will prove
the lemma by induction on $|\sigma|$. If $\left|\sigma\right|=0$,
then $\sigma=1$ and as $\sigma\neq\tau$, 
\begin{eqnarray*}
\sum_{\tau^{(2)}=\sigma}\frac{\beta^{-c(\boldsymbol{\tau^{(1)}})}}{\boldsymbol{\tau^{(1)}}!^{\gamma}} & = & \frac{\beta^{-1}}{\tau!^{\gamma}}
\end{eqnarray*}
which is exactly the content of the present lemma for the case $|\sigma|=0$.
Let $\tau=\left[\tau_{1}\ldots\tau_{n}\right]_{\bullet}$, where $\tau_{1},\ldots,\tau_{n}$
are all non-empty, and $\sigma=\left[\sigma_{1}\ldots\sigma_{n}\right]_{\bullet}$.
Using Corollary \ref{cor:induction corollary} that relates the sum
$\sum_{\tau^{(2)}=\sigma}$ to $\sum_{\tau_{i}^{(2)}=\sigma_{\pi\left(i\right)}}$,
\begin{eqnarray}
\sum_{\tau^{\left(2\right)}=\sigma}\frac{\beta^{-c(\boldsymbol{\tau^{(1)}})}}{\boldsymbol{\tau^{(1)}}!^{\gamma}} & = & \sum_{\pi\in\mathbf{P}_{\tau,\sigma}^{\prime}}\Pi_{i:\sigma_{\pi\left(i\right)}\subsetneq\tau_{i}}\Big[\sum_{\tau_{i}^{(2)}=\sigma_{\pi(i)}}\frac{\beta^{-c(\boldsymbol{\tau_{i}^{(1)}})}}{\boldsymbol{\tau_{i}^{(1)}}!^{\gamma}}\Big].\label{eq:equality}
\end{eqnarray}
By the induction hypothesis and that $k_{\tau,\sigma}=\min_{\pi\in\mathbf{P}_{\tau,\sigma}^{\prime}}\big|\{i:\sigma_{\pi\left(i\right)}\subsetneq\tau_{i}\}\big|$
by Lemma \ref{lem:combin counting}, 
\begin{eqnarray}
 &  & \sum_{\pi\in\mathbf{P}_{\tau,\sigma}^{\prime}}\Pi_{i:\sigma_{\pi\left(i\right)}\subsetneq\tau_{i}}\Big[\sum_{\tau_{i}^{(2)}=\sigma_{\pi(i)}}\frac{\beta^{-c(\boldsymbol{\tau_{i}^{(1)}})}}{\boldsymbol{\tau_{i}^{(1)}}!^{\gamma}}\Big]\label{eq:first step combin}\\
 & \leq & \Big(\beta^{-1}\exp\big(\sum_{j=1}^{|\sigma|-1}(|\sigma|-1)^{j}(1-\gamma)\big)\Big)^{k_{\tau,\sigma}}\label{eq:first step combin a}\\
 &  & \times\sum_{\pi\in\mathbf{P}_{\tau,\sigma}^{\prime}}\Big(\Pi_{i:\sigma_{\pi\left(i\right)}\subsetneq\tau_{i}}\sum_{\tau_{i}^{\left(2\right)}=\sigma_{\pi\left(i\right)}}\frac{1}{\boldsymbol{\tau_{i}^{\left(1\right)}}!}\Big)^{\gamma}.
\end{eqnarray}
By the conventional concavity estimate for sum $\sum_{i=1}^{n}a_{i}^{\gamma}\leq n^{1-\gamma}(\sum_{i=1}^{n}a_{i})^{\gamma}$,
\begin{eqnarray}
 &  & \sum_{\pi\in\mathbf{P}_{\tau,\sigma}^{\prime}}\Big(\Pi_{i:\sigma_{\pi\left(i\right)}\subsetneq\tau_{i}}\sum_{\tau_{i}^{(2)}=\sigma_{\pi(i)}}\frac{1}{\boldsymbol{\tau_{i}^{(1)}}!}\Big)^{\gamma}\label{eq:second step combin}\\
 & \leq & |\mathbf{P}_{\tau,\sigma}^{\prime}|^{1-\gamma}\Big(\sum_{\pi\in\mathbf{P}_{\tau,\sigma}^{\prime}}\Pi_{i:\sigma_{\pi\left(i\right)}\subsetneq\tau_{i}}\sum_{\tau_{i}^{(2)}=\sigma_{\pi(i)}}\frac{1}{\boldsymbol{\tau_{i}^{(1)}}!}\Big)^{\gamma}.
\end{eqnarray}
Using our estimate for $|\mathbf{P}_{\tau,\sigma}^{\prime}|$ in Lemma
\ref{lem:combin counting}, 
\begin{eqnarray}
 &  & \sum_{\pi\in\mathbf{P}_{\tau,\sigma}^{\prime}}\big(\Pi_{i:\sigma_{\pi\left(i\right)}\subsetneq\tau_{i}}\sum_{\tau_{i}^{(2)}=\sigma_{\pi(i)}}\frac{1}{\boldsymbol{\tau_{i}^{(1)}}!}\Big)^{\gamma}\label{eq:third step estimate}\\
 & \leq & \exp(|\sigma|^{2}k_{\tau,\sigma}(1-\gamma)\big)\Big(\sum_{\pi\in\mathbf{P}_{\tau,\sigma}^{\prime}}\Pi_{i:\sigma_{\pi\left(i\right)}\subsetneq\tau_{i}}\sum_{\tau_{i}^{(2)}=\sigma_{\pi(i)}}\frac{1}{\boldsymbol{\tau_{i}^{(1)}}!}\Big)^{\gamma}.
\end{eqnarray}
Combining the identity (\ref{eq:equality}) with all the inequalities
we have so far, namely (\ref{eq:first step combin a}) and (\ref{eq:third step estimate}), 

\begin{eqnarray*}
 &  & \sum_{\tau^{(2)}=\sigma}\frac{\beta^{-c(\boldsymbol{\tau^{(1)}})}}{\boldsymbol{\tau^{(1)}}!^{\gamma}}\\
 & \leq & \Big(\beta^{-1}\exp\big[(1-\gamma)(|\sigma|^{2}+\sum_{j=1}^{|\sigma|-1}(|\sigma|-1)^{j})\big]\Big)^{k_{\tau,\sigma}}\\
 &  & \times(\sum_{\pi\in\mathbf{P}_{\tau,\sigma}^{\prime}}\Pi_{i:\sigma_{\pi\left(i\right)}\subsetneq\tau_{i}}\sum_{\tau_{i}^{(2)}=\sigma_{\pi(i)}}\frac{1}{\boldsymbol{\tau_{i}^{(1)}}!}\big)^{\gamma}.
\end{eqnarray*}
Finally, as $k_{\tau,\sigma}\geq1$ (see counting lemma, Lemma \ref{lem:combin counting})
and $\beta\geq\exp\Big[\sum_{i=1}^{|\sigma|}|\sigma|{}^{i}(1-\gamma)\Big]$,
\[
\sum_{\tau^{(2)}=\sigma}\frac{\beta^{-c(\boldsymbol{\tau^{(1)}})}}{\boldsymbol{\tau^{(1)}}!^{\gamma}}\leq\beta^{-1}\exp\big(\sum_{j=1}^{|\sigma|}|\sigma|^{j}(1-\gamma)\big)(\sum_{\pi\in\mathbf{P}_{\tau,\sigma}^{\prime}}\Pi_{i:\sigma_{\pi\left(i\right)}\subsetneq\tau_{i}}\sum_{\tau_{i}^{(2)}=\sigma_{\pi(i)}}\frac{1}{\boldsymbol{\tau_{i}^{(1)}}!}\big)^{\gamma}.
\]
Note now by the $\gamma=1$ and $\beta=1$ case of Corollary \ref{cor:induction corollary},
\[
\sum_{\pi\in\mathbf{P}_{\tau,\sigma}^{\prime}}\Pi_{i:\sigma_{\pi\left(i\right)}\subsetneq\tau_{i}}\sum_{\tau_{i}^{(2)}=\sigma_{\pi(i)}}\frac{1}{\boldsymbol{\tau_{i}^{(1)}}!}=\sum_{\tau^{(2)}=\sigma}\frac{1}{\boldsymbol{\tau^{(1)}}!}.
\]
\end{proof}

We now state a lemma that is equivalent to Gubinelli's tree-binomial
theorem \cite{RamificationofBranchedRoughPath10}. It allows us to
rewrite sum over rooted trees to a sum over integers. 
\begin{lem}
\label{Tree-binomial-theorem}(Tree binomial theorem) Let $\tau$
be a rooted tree. Then

\[
\sum_{|\tau^{(2)}|=l}\frac{\tau!}{\boldsymbol{\tau^{(1)}}!\tau^{(2)}!}={|\tau| \choose l}.
\]
\end{lem}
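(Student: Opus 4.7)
The plan is to argue by induction on $|\tau|$. The base case $\tau = \bullet$ is immediate: the admissible cuts produce $\tau^{(2)} = 1$ and $\tau^{(2)} = \bullet$, each contributing the summand $1$, matching ${1 \choose 0}$ and ${1 \choose 1}$ respectively.

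For the inductive step, write $\tau = [\tau_1, \ldots, \tau_n]_\bullet$ and split the sum according to whether $\tau^{(2)} = 1$ or not. The case $\tau^{(2)} = 1$ occurs only for $l = 0$ (the trivial cut above the root), contributing $\tau!/\tau! = 1 = {|\tau| \choose 0}$. For $l \geq 1$, the coproduct formula (\ref{eq:coproduct definition}) expresses $\tau^{(2)} = [\tau_1^{(2)}, \ldots, \tau_n^{(2)}]_\bullet$ as being parametrised by independently choosing an admissible cut $(\boldsymbol{\tau_i^{(1)}}, \tau_i^{(2)})$ of each $\tau_i$, with $\tau_i^{(2)} = 1$ allowed (such a branch is simply dropped from the bracket product). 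Using the multiplicativities $\tau! = |\tau| \prod_i \tau_i!$, $\tau^{(2)}! = l \prod_i \tau_i^{(2)}!$, and $\boldsymbol{\tau^{(1)}}! = \prod_i \boldsymbol{\tau_i^{(1)}}!$ together with the constraint $\sum_i |\tau_i^{(2)}| = l - 1$, I get
\[
\sum_{|\tau^{(2)}| = l} \frac{\tau!}{\boldsymbol{\tau^{(1)}}!\tau^{(2)}!} = \frac{|\tau|}{l} \sum_{l_1 + \cdots + l_n = l-1} \prod_{i=1}^n \Big[\sum_{|\tau_i^{(2)}| = l_i}\frac{\tau_i!}{\boldsymbol{\tau_i^{(1)}}!\tau_i^{(2)}!}\Big].
\]

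Applying the inductive hypothesis to each inner sum (with the $l_i = 0$ case given by the trivial cut of $\tau_i$, contributing $1 = {|\tau_i| \choose 0}$) replaces the bracketed terms by ${|\tau_i| \choose l_i}$. Since $\sum_i |\tau_i| = |\tau| - 1$, Vandermonde's convolution identity collapses the remaining sum to ${|\tau|-1 \choose l-1}$, and the prefactor yields $\frac{|\tau|}{l} {|\tau|-1 \choose l-1} = {|\tau| \choose l}$. Combined with the $l = 0$ case this closes the induction.

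The only real substance here is unpacking the coproduct so that the sum factorises cleanly over the sub-cuts of each $\tau_i$; the two identities needed ($\tau! = |\tau| \prod_i \tau_i!$ and Vandermonde) are standard, so I do not expect any genuine obstacle beyond careful bookkeeping of the factor $|\tau|/l$ and of the empty-cut ($\tau_i^{(2)} = 1$) contributions.
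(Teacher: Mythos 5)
Your proof is correct, but it takes a genuinely different route from the paper's. The paper deduces the identity in two lines from Gubinelli's tree binomial theorem (Lemma 4.4 of \cite{RamificationofBranchedRoughPath10}), i.e.\ the generating-function identity $(1+x)^{|\tau|}/\tau!=\sum x^{|\tau^{(2)}|}/(\boldsymbol{\tau^{(1)}}!\,\tau^{(2)}!)$, by comparing coefficients of $x^{l}$ with the classical binomial expansion; all the combinatorial content is outsourced to the cited lemma. You instead give a self-contained induction on $|\tau|$, using the recursive definition (\ref{eq:coproduct definition}) of the coproduct to factor the cut-sum over the branches $\tau_i$, the multiplicativity $\tau!=|\tau|\,\tau_1!\cdots\tau_n!$, and Vandermonde's convolution together with $\frac{|\tau|}{l}{|\tau|-1 \choose l-1}={|\tau| \choose l}$; in effect you reprove Gubinelli's lemma coefficient by coefficient. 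Your bookkeeping is right: the full cut is the unique term with $l=0$; for $l\geq1$ the root accounts for one vertex of $\tau^{(2)}$, which gives the constraint $\sum_i l_i=l-1$; and the degenerate sub-cuts $\tau_i^{(2)}=1$ and $\tau_i^{(2)}=\tau_i$ are correctly absorbed into the inductive hypothesis as the $l_i=0$ and $l_i=|\tau_i|$ cases (with empty sums matching vanishing binomial coefficients when $l_i>|\tau_i|$). The paper's proof is shorter; yours is self-contained and makes visible the same branch-factorisation mechanism that drives Lemma \ref{lem:induction lemma} and Corollary \ref{cor:induction corollary} elsewhere in the paper.
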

\begin{proof}
By the tree binomial theorem, Lemma 4.4 in \cite{RamificationofBranchedRoughPath10},
we have for $x\in\mathbb{R}$,
\[
\frac{\left(1+x\right)^{|\tau|}}{\tau!}=\sum\frac{x^{|\tau^{(2)}|}}{\boldsymbol{\tau^{(1)}}!\tau^{(2)}!}.
\]
The result follows by comparing the coefficients of $x^{l}$ with
the classical binomial theorem. 
\end{proof}
We now prove the bound on the Hopf algebra multiplication $\star$.

\begin{proof}[Proof of boundednes of tree multiplication Lemma 11]Note
first that by the definition of $\star$, if $|\tau|=n+k$, 
\begin{eqnarray}
 &  & \left|\langle X^{n}\star Y^{k},\tau\rangle\right|\nonumber \\
 & = & \big|\sum_{|\tau^{(2)}|=k}\langle X^{n},\boldsymbol{\tau^{(1)}}\rangle\langle Y^{k},\tau^{(2)}\rangle\big|\\
 & \leq & \sum_{|\tau^{(2)}|=k}|\langle X^{n},\boldsymbol{\tau^{(1)}}\rangle|\,|\langle Y^{k},\tau^{(2)}\rangle|\nonumber 
\end{eqnarray}
By the definition of $\left\Vert \cdot\right\Vert _{\mathcal{F},\gamma,\beta}$
and $\left\Vert \cdot\right\Vert _{\mathcal{T},\gamma,\beta}$, 
\begin{eqnarray}
 &  & \sum_{|\tau^{(2)}|=k}|\langle X^{n},\boldsymbol{\tau^{(1)}}\rangle|\,|\langle Y^{k},\tau^{(2)}\rangle|\nonumber \\
 & \leq & \left\Vert X^{n}\right\Vert _{\mathcal{F},\gamma,\beta}\left\Vert Y^{k}\right\Vert _{\mathcal{T},\gamma,\beta}(n!k!)^{\gamma}\sum_{|\tau^{(2)}|=k}\frac{\beta^{-c(\boldsymbol{\tau^{(1)}})-1}}{(\boldsymbol{\tau^{(1)}}!\tau^{(2)}!)^{\gamma}}.\label{eq:briding to tree}
\end{eqnarray}
As we assumed that $n\geq1$, we have $k<\left|\tau\right|$ and hence
we may apply the concavity estimate for trees, Lemma \ref{lem:Convexity estimate for trees},
to obtain 
\begin{eqnarray*}
\sum_{|\tau^{(2)}|=k}\frac{\beta^{-c(\boldsymbol{\tau^{(1)}})}}{(\boldsymbol{\tau^{(1)}}!\tau^{(2)}!)^{\gamma}} & \leq & \sum_{|\sigma|=k}\frac{1}{\sigma!^{\gamma}}\sum_{\tau^{(2)}=\sigma}\frac{\beta^{-c(\boldsymbol{\tau^{(1)}})}}{\boldsymbol{\tau^{(1)}}!^{\gamma}}\\
 & \leq & c_{k}\beta^{-1}\sum_{|\sigma|=k}\frac{1}{\sigma!^{\gamma}}\big(\sum_{\tau^{(2)}=\sigma}\frac{1}{\boldsymbol{\tau^{(1)}}!}\big)^{\gamma}\\
 & \leq & c_{k}\big|\mathcal{T}^{k}\big|^{1-\gamma}\beta^{-1}\big(\sum_{|\tau^{(2)}|=k}\frac{1}{\boldsymbol{\tau^{(1)}}!\tau^{(2)}!}\big)^{\gamma}.
\end{eqnarray*}
We now use the tree binomial theorem (Lemma \ref{Tree-binomial-theorem})
to deduce that 
\begin{equation}
\sum_{|\tau^{(2)}|=k}\frac{\beta^{-c(\boldsymbol{\tau^{(1)}})}}{\left(\boldsymbol{\tau^{(1)}}!\tau^{(2)}!\right)^{\gamma}}\leq c_{k}\big|\mathcal{T}^{k}\big|^{1-\gamma}\beta^{-1}\frac{1}{\tau!^{\gamma}}{|\tau| \choose k}^{\gamma}.\label{eq:bridging 2}
\end{equation}
Therefore, for all rooted trees $\tau$ such that $\left|\tau\right|=n+k$,
by substituting (\ref{eq:bridging 2}) into (\ref{eq:briding to tree}),
\[
\left|\left\langle X^{n}\star Y^{k},\tau\right\rangle \right|\leq c_{k}\big|\mathcal{T}^{k}\big|^{1-\gamma}\big(\frac{\left|\tau\right|!}{\tau!}\big)^{\gamma}\beta^{-2}\left\Vert X^{n}\right\Vert _{\mathcal{F},\gamma,\beta}\left\Vert Y^{k}\right\Vert _{\mathcal{T},\gamma,\beta}
\]
and we have 
\begin{eqnarray*}
\left\Vert X^{n}\star Y^{k}\right\Vert _{\mathcal{T},\gamma,\beta} & = & \max_{|\tau|=n+k,\tau\mbox{ trees}}\beta\left|\left\langle X^{n}\star Y^{k},\tau\right\rangle \right|\frac{\tau!{}^{\gamma}}{|\tau|!^{\gamma}}\\
 & \leq & c_{k}\big|\mathcal{T}^{k}\big|^{1-\gamma}\beta^{-1}\left\Vert X^{n}\right\Vert _{\mathcal{F},\gamma,\beta}\left\Vert Y^{k}\right\Vert _{\mathcal{T},\gamma,\beta}.
\end{eqnarray*}
\end{proof}

\section{Compatibility of our estimate with tree multiplication}

We showed in the last section the multiplicative bound 
\begin{equation}
\left\Vert X^{n}\star Y^{k}\right\Vert _{\mathcal{T},\gamma,\beta}\leq c_{k}\big|\mathcal{T}^{k}\big|^{1-\gamma}\beta^{-1}\left\Vert X^{n}\right\Vert _{\mathcal{F},\gamma,\beta}\left\Vert Y^{k}\right\Vert _{\mathcal{T},\gamma,\beta}\label{eq:multiplicative bound}
\end{equation}
with $\beta\geq c_{k}$. That we can choose a large $\beta$ is very
useful. It will help us to annihilate any constant depending on $k$.
Suppose that $X_{\cdot,\cdot}$ is a branched rough path and $X_{s,t}^{n}$
denotes the restriction of the branched rough path $X$ on trees with
$n$ vertices. Let $(t_{0},\ldots,t_{r})$ be a partition for $[s,t]$,
then in a similar spirit to Lyons 94'\cite{Lyons94}, we have 
\begin{equation}
X_{s,t}^{n+1}=\lim_{|t_{i}-t_{i+1}|\rightarrow\infty}\sum_{i=0}^{r-1}\sum_{k=1}^{\lfloor\frac{1}{\gamma}\rfloor}X_{s,t_{i}}^{n+1-k}\star X_{t_{i},t_{i+1}}^{k}.\label{eq:Lyons 94 formula}
\end{equation}
We would like to apply the multiplicative bound (\ref{eq:multiplicative bound})
to estimate the Riemann sum 
\begin{equation}
\left\Vert \sum_{i=0}^{r-1}\sum_{k=1}^{\lfloor\frac{1}{\gamma}\rfloor}X_{s,t_{i}}^{n+1-k}\star X_{t_{i},t_{i+1}}^{k}\right\Vert _{\mathcal{T},\gamma,\beta}.\label{eq:Riemann sum}
\end{equation}
A crucial point is that the biggest $k$ can be here is $\lfloor\frac{1}{\gamma}\rfloor$
which is independent of $n$. The constant in front of $\left\Vert X^{n}\right\Vert _{\mathcal{F},\gamma,\beta}\left\Vert Y^{k}\right\Vert _{\mathcal{T},\gamma,\beta}$
in the multiplicative bound (\ref{eq:multiplicative bound}) is therefore
independent of $n$. If we had use the following formula instead (as
in Lyons 98'\cite{Lyons98}) 
\begin{equation}
X_{s,t}^{n+1}=\lim_{|t_{i}-t_{i+1}|\rightarrow0}\sum_{i=0}^{r-1}\sum_{k=1}^{n}X_{s,t_{i}}^{n+1-k}\star X_{t_{i},t_{i+1}}^{k},\label{eq:Lyons 98 formula}
\end{equation}
then the biggest $k$ can be is $n$, and the constant in (\ref{eq:multiplicative bound})
would depend on $n$. This is the fundamental reason why we must use
the approach in Lyons 94' (\ref{eq:Lyons 94 formula}) instead of
using (\ref{eq:Lyons 98 formula}) as in Lyons 98'. 

After applying the bound (\ref{eq:multiplicative bound}) to estimate
the Riemann sum (\ref{eq:Riemann sum}), we should have a bound for
the tree norm 
\[
\left\Vert X_{s,t}^{n+1}\right\Vert _{\mathcal{T},\gamma,\beta}.
\]
However, to use the multiplicative bound (\ref{eq:multiplicative bound})
for estimating the tree norm $\left\Vert X_{s,t}^{n+2}\right\Vert _{\mathcal{T},\gamma,\beta}$,
it is not enough to know only the tree norm $\left\Vert X_{s,t}^{n+1}\right\Vert _{\mathcal{T},\gamma,\beta}$.
We must also know the forest norm $\left\Vert X_{s,t}^{n+1}\right\Vert _{\mathcal{F},\gamma,\beta}$.
That is why we must find a way of bounding the forest norm $\left\Vert X_{s,t}^{n+1}\right\Vert _{\mathcal{F},\gamma,\beta}$
in terms of $\left\Vert X_{s,t}^{n+1}\right\Vert _{\mathcal{T},\gamma,\hat{\beta}}$
(with $\hat{\beta}>\beta$), which we aim to achieve in this section
through a lemma. We first present the form of our estimate. 

Let $\triangle_{m}(r,r^{\prime})$ denote the $m$-dimensional simplex
\[
\left\{ \left(s_{1},\ldots,s_{m}\right)\in\mathbb{R}^{m}:r<s_{1}<\ldots<s_{m}<r^{\prime}\right\} .
\]
For a one-dimensional path $\rho$, we will define 
\[
S^{\left(m\right)}\left(\rho\right)_{s,t}=\int_{\triangle_{m}\left(s,t\right)}\mathrm{d}\rho\left(s_{1}\right)\ldots\mathrm{d}\rho\left(s_{m}\right).
\]
For each $a,b>0$ define a one-dimensional path $\rho$ by 
\[
\rho_{a}^{b}\left(t\right)=\frac{1}{b}\left(t-a\right)^{b}.
\]
The construction of our estimate is based on the following lemma,
which says that our estimate dominates the tail of a binomial sum.
Its proof can be found in the Appendix. 
\begin{lem}
\label{Taylor binomial}Let $N\in\mathbb{N}\cup\left\{ 0\right\} $
and $n\geq N+1$. For all $u<s<t$ 
\begin{eqnarray}
\sum_{j=N+1}^{n}\frac{\left(s-u\right)^{n-j}\left(t-s\right)^{j}}{\left(n-j\right)!j!} & \leq & \frac{1}{\left(n-N-1\right)!}S^{\left(N+1\right)}\left(\rho_{u}^{\frac{n}{N+1}}\right)_{s,t}.\label{eq:dominate binomial sum}
\end{eqnarray}

\end{lem}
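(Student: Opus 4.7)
The plan is to recognize the left-hand side of (\ref{eq:dominate binomial sum}) as a Taylor remainder and then to bound it by a pointwise comparison of integrands. Let $f(r) = (r-u)^{n}/n!$. Differentiating repeatedly one finds $f^{(j)}(s) = (s-u)^{n-j}/(n-j)!$ for $0\leq j\leq n$, so the standard Taylor expansion of $f$ at $s$ reads
$$f(t) = \sum_{j=0}^{n}\frac{(s-u)^{n-j}(t-s)^{j}}{(n-j)!\,j!}.$$
Subtracting the first $N+1$ terms, the sum on the left of (\ref{eq:dominate binomial sum}) is precisely the Taylor remainder, which a straightforward induction on $N$ writes as the iterated integral $\int_{s<s_{1}<\ldots<s_{N+1}<t} f^{(N+1)}(s_{1})\,ds_{1}\cdots ds_{N+1}$. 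Since $n\geq N+1$, we have $f^{(N+1)}(r) = (r-u)^{n-N-1}/(n-N-1)!$, and therefore
$$\sum_{j=N+1}^{n}\frac{(s-u)^{n-j}(t-s)^{j}}{(n-j)!\,j!} \;=\; \frac{1}{(n-N-1)!}\int_{s<s_{1}<\ldots<s_{N+1}<t}(s_{1}-u)^{n-N-1}\,ds_{1}\cdots ds_{N+1}.$$

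Next I would rewrite the right-hand side of (\ref{eq:dominate binomial sum}) equally explicitly. From the definition $\rho_{u}^{b}(r) = (r-u)^{b}/b$ we read off $d\rho_{u}^{b}(r) = (r-u)^{b-1}\,dr$, so with $b = n/(N+1)$,
$$S^{(N+1)}\bigl(\rho_{u}^{n/(N+1)}\bigr)_{s,t} \;=\; \int_{s<s_{1}<\ldots<s_{N+1}<t}\prod_{i=1}^{N+1}(s_{i}-u)^{n/(N+1)-1}\,ds_{1}\cdots ds_{N+1}.$$

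The inequality then reduces to a pointwise domination of integrands on the simplex $\{s<s_{1}<\ldots<s_{N+1}<t\}$. Using the algebraic identity $n-N-1 = (N+1)\bigl(n/(N+1)-1\bigr)$,
$$(s_{1}-u)^{n-N-1} \;=\; \prod_{i=1}^{N+1}(s_{1}-u)^{n/(N+1)-1}.$$
Because $n/(N+1)-1\geq 0$ (from the hypothesis $n\geq N+1$) and $0<s_{1}-u\leq s_{i}-u$ for each $i$ on the region of integration, each factor satisfies $(s_{1}-u)^{n/(N+1)-1}\leq (s_{i}-u)^{n/(N+1)-1}$; consequently the left integrand is bounded by the right integrand. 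Integrating both sides over the simplex and dividing by $(n-N-1)!$ yields (\ref{eq:dominate binomial sum}).

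There is no substantial obstacle here: the Taylor-remainder identity holds by immediate induction on $N$, and the key comparison between the two integrands is elementary once one spots the identity $n-N-1 = (N+1)(n/(N+1)-1)$ which lets one factorise a single power into $N+1$ equal pieces and then enlarge each piece using $s_{1}\leq s_{i}$. The boundary case $n=N+1$ is harmless, for then the common exponent $n/(N+1)-1$ vanishes and the two integrands, hence the two sides of (\ref{eq:dominate binomial sum}), coincide.
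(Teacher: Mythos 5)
Your proof is correct and follows essentially the same route as the paper's: both establish the Taylor-remainder identity expressing the left-hand side as $\frac{1}{(n-N-1)!}\int_{\triangle_{N+1}(s,t)}(s_{1}-u)^{n-N-1}\,ds_{1}\cdots ds_{N+1}$ and then use $s_{1}\leq s_{i}$ on the simplex to dominate $(s_{1}-u)^{n-N-1}$ by $\prod_{i=1}^{N+1}(s_{i}-u)^{(n-N-1)/(N+1)}$, which is the integrand of $S^{(N+1)}(\rho_{u}^{n/(N+1)})_{s,t}$.
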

The following lemma is the main result of this section and allows
us to convert our bound from one about the tree norm to the forest
norm. 
\begin{lem}
\label{tree to forest}(Compatibility with tree multiplication) Let
$0<\gamma\leq1$ and $N=\lfloor\gamma^{-1}\rfloor$. Let $X$ be a
$\gamma$-branched rough path. Let 
\[
\hat{c}_{N}=3|\mathcal{T}^{N}|^{1-\gamma}(N+1)^{3\left(1-\gamma\right)}\exp2\left(N+1\right),\;\beta\geq\hat{c}_{N}.
\]
 Suppose that for all $n\leq M$ and $u\leq s\leq t$, 
\begin{eqnarray}
\Vert\sum_{k\geq N+1}X_{u,s}^{n-k}\star X_{s,t}^{k}\Vert_{\mathcal{T},\gamma,\beta} & \leq & \left[\frac{1}{\left(n-N-1\right)!}S^{\left(N+1\right)}\left(\rho_{u}^{\frac{n}{N+1}}\right)_{s,t}\right]^{\gamma}.\label{eq:tree estimate}
\end{eqnarray}
Then for all $n\leq M$, 
\begin{eqnarray*}
\Vert\sum_{k\geq N+1}X_{u,s}^{n-k}\star X_{s,t}^{k}\Vert_{\mathcal{F},\gamma,\beta\hat{c}_{N}^{-1}} & \leq & \left[\frac{1}{\left(n-N-1\right)!}S^{\left(N+1\right)}\left(\rho_{u}^{\frac{n}{N+1}}\right)_{s,t}\right]^{\gamma}.
\end{eqnarray*}

\end{lem}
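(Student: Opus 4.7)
The plan is to use the character property of the branched rough paths $X_{u,s}$ and $X_{s,t}$ (and hence of $X_{u,t}=X_{u,s}\star X_{s,t}$) to reduce the forest-norm bound to a combinatorial combination of the tree-norm bound in the hypothesis. For a forest $\boldsymbol{\tau}=\tau_{1}\cdots\tau_{m}$, the factorisation $\triangle(\tau_{1}\cdots\tau_{m})=\triangle\tau_{1}\cdots\triangle\tau_{m}$ of the coproduct together with $\langle X_{u,s},\boldsymbol{\sigma}_{1}\cdot\boldsymbol{\sigma}_{2}\rangle=\langle X_{u,s},\boldsymbol{\sigma}_{1}\rangle\langle X_{u,s},\boldsymbol{\sigma}_{2}\rangle$ gives
\[
\big\langle\sum_{k\geq N+1}X_{u,s}^{n-k}\star X_{s,t}^{k},\,\boldsymbol{\tau}\big\rangle=\sum_{\substack{l_{1},\ldots,l_{m}\geq 0\\ l_{1}+\cdots+l_{m}\geq N+1}}\prod_{i=1}^{m}c_{i}(l_{i}),
\]
where I abbreviate $c_{i}(l):=\langle X_{u,s}^{|\tau_{i}|-l}\star X_{s,t}^{l},\tau_{i}\rangle$.

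I would split this summation according to the index set $S:=\{i:l_{i}\geq N+1\}$. When $S\neq\emptyset$, each contribution factorises as $\prod_{i\in S}T_{i}\prod_{j\notin S}H_{j}$ with $T_{i}:=\sum_{l\geq N+1}c_{i}(l)$ and $H_{j}:=\sum_{l=0}^{N}c_{j}(l)$; the tails $T_{i}$ are controlled directly by the tree hypothesis (\ref{eq:tree estimate}) applied to the trees $\tau_{i}$, while each term $c_{j}(l)$ is bounded through the coproduct expansion and the H\"older condition (\ref{eq:Holder}), using the character multiplicativity of $X_{u,s}$ on the forest $\boldsymbol{\tau_{j}^{(1)}}$, which gives $|c_{j}(l)|\lesssim(s-u)^{\gamma(|\tau_{j}|-l)}(t-s)^{\gamma l}$. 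The residual regime $S=\emptyset$ forces all $l_{i}\leq N$ but $\sum l_{i}\geq N+1$, so in particular $m\geq 2$; here every $c_{i}(l_{i})$ is bounded by the same coproduct-plus-H\"older estimate.

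After these bounds, each contribution takes the form $\prod_{i}(s-u)^{\gamma a_{i}}(t-s)^{\gamma b_{i}}$ with $\sum_{i}(a_{i}+b_{i})=n$ and $\sum_{i}b_{i}\geq N+1$. I would then collect the resulting combinatorial sum into a classical binomial tail $\sum_{j\geq N+1}\binom{n}{j}(s-u)^{n-j}(t-s)^{j}$ via the tree binomial theorem (Lemma~\ref{Tree-binomial-theorem}), dominate it by $\frac{1}{(n-N-1)!}S^{(N+1)}(\rho_{u}^{n/(N+1)})_{s,t}$ through Lemma~\ref{Taylor binomial}, and pass the exponent $\gamma$ across the indexed sum using the concavity estimate (Lemma~\ref{lem:Convexity estimate for trees}). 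This produces the factors $|\mathcal{T}^{N}|^{1-\gamma}$ and $(N+1)^{3(1-\gamma)}$ in $\hat{c}_{N}$. The main obstacle is the residual case $S=\emptyset$: no tree-level tail hypothesis is available there, and the bound must be generated entirely from H\"older estimates while producing the $\beta^{-m}$ weight demanded by the forest norm with rescaled parameter $\beta/\hat{c}_{N}$; the exponential factor $\exp 2(N+1)$ in $\hat{c}_{N}$ is calibrated precisely to absorb the combinatorial cost of summing over all subsets $S$ and all admissible cuts with $l_{i}\leq N$. Carefully tracking the multi-index factorial quotient $\boldsymbol{\tau}!^{\gamma}/n!^{\gamma}$ through this decomposition will be the main technical obstacle.
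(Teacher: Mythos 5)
Your overall strategy is close in spirit to the paper's: split the coproduct sum over a forest according to which components absorb an order $\geq N+1$ piece, control those by the tree hypothesis (\ref{eq:tree estimate}), control the remaining components by H\"older/factorial bounds, and handle the residual all-low-order regime by binomial identities together with Lemma \ref{Taylor binomial}. The difference is that the paper does not sum over all subsets $S$ at once: it argues by induction on $c(\boldsymbol{\tau})$, peeling off a single tree $\tau_1$ from $\boldsymbol{\tau}=\tau_1\boldsymbol{\tau}_2$ via the forest factorisation lemma (Lemma \ref{lem:algebraic lemma for forest}) and splitting into only three regimes: $l\geq N+1$ on $\tau_1$ (with $m$ unrestricted, so the $\boldsymbol{\tau}_2$ factor collapses by Chen's identity to $\langle X_{u,t},\boldsymbol{\tau}_2\rangle$ and is bounded by a plain power $(t-u)^{|\boldsymbol{\tau}_2|}$); $l\leq N$, $m\geq N+1$ (inductive hypothesis on $\boldsymbol{\tau}_2$); and $l,m\leq N$ with $l+m\geq N+1$ (your $S=\emptyset$ case, but only ever for two blocks). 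In each regime there is at most one factor of the form $S^{(N+1)}(\rho_u^{\cdot})_{s,t}$ and the other factor is a power of $(t-u)$, so Lemma \ref{lem:binomial lemma for overlapping intervals} applies directly.

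The gap in your version is the recombination step when $|S|\geq 2$. After applying (\ref{eq:tree estimate}) to each $T_i$, $i\in S$, the contribution is a product of several factors $\bigl[\tfrac{1}{(|\tau_i|-N-1)!}S^{(N+1)}(\rho_u^{|\tau_i|/(N+1)})_{s,t}\bigr]^{\gamma}$ over the \emph{same} interval $[s,t]$; these are not of the form $(s-u)^{\gamma a_i}(t-s)^{\gamma b_i}$, so your claim that ``each contribution takes the form $\prod_i(s-u)^{\gamma a_i}(t-s)^{\gamma b_i}$'' fails precisely for those terms, and none of the paper's binomial lemmas combines two $S^{(N+1)}$ factors over the same interval. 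This is fixable --- for instance, bound all but one tail crudely by $\tfrac{1}{(n_i-N-1)!}S^{(N+1)}(\rho_u^{n_i/(N+1)})_{s,t}\leq e^{N+1}(t-u)^{n_i}/n_i!$ and then invoke Lemma \ref{lem:binomial lemma for overlapping intervals} --- but as written the step is missing, and it is exactly the step the paper's induction is designed to avoid. Two smaller points: the concavity input needed here is the elementary $\sum_{i=1}^{M} a_i^{\gamma}\leq M^{1-\gamma}(\sum_{i=1}^{M} a_i)^{\gamma}$, not Lemma \ref{lem:Convexity estimate for trees} (which enters only indirectly through the boundedness of $\star$); and the per-term bound $|c_j(l)|\lesssim(s-u)^{\gamma(|\tau_j|-l)}(t-s)^{\gamma l}$ must carry the factors $\binom{|\tau_j|}{l}^{\gamma}/\tau_j!^{\gamma}$ and $\beta^{-1}$ (via Lemmas \ref{lem:tree sum} and \ref{lem:Computation lemma-1}) for the forest norm and the final binomial resummation to close.
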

We will once again need a series of lemmas. The first of which states
that for factorial decay estimates the forest norm $\Vert\cdot\Vert_{\mathcal{F},\gamma,\beta}$
is the same as tree norm $\Vert\cdot\Vert_{\mathcal{T},\gamma,\beta}$. 
\begin{lem}
\label{lem:factorial decay estimate}Let $X\in\mathcal{G}$. Let $k\geq0$
and 
\[
\beta\geq\exp\big[\sum_{i=1}^{k}k{}^{i}(1-\gamma)\big].
\]
If there exists $a>0$ such that 
\begin{equation}
\left\Vert X^{n}\right\Vert _{\mathcal{T},\gamma,\beta}\leq\frac{a^{\gamma n}}{n!^{\gamma}},\label{eq:tree factorial estimate}
\end{equation}
then 
\begin{equation}
\left\Vert X^{n}\right\Vert _{\mathcal{F},\gamma,\beta}\leq\frac{a^{\gamma n}}{n!^{\gamma}}.\label{eq:forest factorial decay estimate}
\end{equation}
\end{lem}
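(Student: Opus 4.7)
The plan is to exploit the character (multiplicative homomorphism) property of $X$ directly. Any forest $\boldsymbol{\tau}$ with $|\boldsymbol{\tau}|=n$ can be written as $\boldsymbol{\tau}=\tau_1\cdots\tau_m$ where the $\tau_i$ are non-empty rooted trees and $m=c(\boldsymbol{\tau})$ (empty factors can be discarded since $\langle X,1\rangle=1$). By definition of the forest factorial, $\boldsymbol{\tau}! = \prod_i \tau_i!$, and since $X\in\mathcal{G}$, $\langle X,\boldsymbol{\tau}\rangle = \prod_i \langle X,\tau_i\rangle$.

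First I would unpack the tree-norm hypothesis applied to each $\tau_i$: noting that $c(\tau_i)=1$ for a single tree,
\[
|\langle X,\tau_i\rangle| \leq \frac{|\tau_i|!^\gamma}{\beta\,\tau_i!^\gamma}\,\|X^{|\tau_i|}\|_{\mathcal{T},\gamma,\beta} \leq \frac{a^{\gamma|\tau_i|}}{\beta\,\tau_i!^\gamma}.
\]
(This uses the assumed tree-norm bound (\ref{eq:tree factorial estimate}) at each size $|\tau_i|\leq n$, which is why the hypothesis is imposed uniformly.)

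Next I would multiply these bounds across $i=1,\dots,m$ using the character property:
\[
|\langle X,\boldsymbol{\tau}\rangle| = \prod_{i=1}^m |\langle X,\tau_i\rangle| \leq \prod_{i=1}^m \frac{a^{\gamma|\tau_i|}}{\beta\,\tau_i!^\gamma} = \frac{a^{\gamma n}}{\beta^{m}\,\boldsymbol{\tau}!^\gamma}.
\]
Substituting into the definition of the forest norm, the factors $\beta^{m}=\beta^{c(\boldsymbol{\tau})}$ and $\boldsymbol{\tau}!^\gamma$ cancel exactly against the weights appearing there:
\[
|\langle X,\boldsymbol{\tau}\rangle|\,\frac{\beta^{c(\boldsymbol{\tau})}\,\boldsymbol{\tau}!^\gamma}{|\boldsymbol{\tau}|!^\gamma} \leq \frac{a^{\gamma n}}{n!^\gamma}.
\]
Taking the maximum over forests $\boldsymbol{\tau}$ with $|\boldsymbol{\tau}|=n$ yields (\ref{eq:forest factorial decay estimate}).

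There is no real obstacle here: the entire content of the lemma is the compatibility of the weights $\beta^{c(\boldsymbol{\tau})}$ and $\boldsymbol{\tau}!^\gamma$ with multiplicativity of $X$. The only mild care needed is handling empty components via $\langle X,1\rangle=1$, and observing that no loss of constant occurs (in contrast to Lemma \ref{lem:tree sum}, where passing through the coproduct \emph{does} incur the constant $c_k$). The hypothesis on $\beta$ is not used in this step on its own; it is inherited from the chain of lemmas in which this result is applied.
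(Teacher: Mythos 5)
Your proof is correct and follows essentially the same route as the paper: decompose the forest into its constituent trees, apply the character property of $X\in\mathcal{G}$ together with the tree-norm hypothesis at each size $|\tau_i|$, and observe that the weights $\beta^{c(\boldsymbol{\tau})}$ and $\boldsymbol{\tau}!^{\gamma}$ factor multiplicatively and cancel exactly. Your remarks about the unused hypothesis on $\beta$ and the uniform-in-$n$ reading of the assumption are also consistent with how the paper uses the lemma.
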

\begin{proof}
By the assumption (\ref{eq:tree factorial estimate}), for all rooted
trees $\tau$ such that $|\tau|=n$, 
\[
|\langle X^{n},\tau\rangle|\leq\frac{a^{\gamma n}}{\beta\tau!^{\gamma}}.
\]
Therefore, for any forest $\boldsymbol{\tau}=\tau_{1}\ldots\tau_{m}$,
where $|\boldsymbol{\tau}|=n$ and $\tau_{1}\ldots\tau_{m}$ are rooted
trees, by the 
\begin{eqnarray*}
|\langle X^{n},\boldsymbol{\tau}\rangle| & = & |\langle X,\boldsymbol{\tau}\rangle|\\
 & = & |\langle X,\tau_{1}\rangle|\ldots|\langle X,\tau_{m}\rangle|\\
 & = & |\langle X^{|\tau_{1}|},\tau_{1}\rangle|\ldots|\langle X^{|\tau_{m}|},\tau_{m}\rangle|\\
 & \leq & \frac{a^{\gamma n}}{\beta^{m}\boldsymbol{\tau}!^{\gamma}},
\end{eqnarray*}
which is equivalent to our desired factorial decay estimate (\ref{eq:forest factorial decay estimate}). 
\end{proof}
To extend estimates about rooted trees to estimates about forests,
we usually need to carry out induction on the number of components
in the forest. To carry out such induction, the following algebraic
identity is very useful. 
\begin{lem}
\label{lem:algebraic lemma for forest}(Forest factorisation lemma)
Let $X,Y\in\mathcal{G}$. Then for any forests $\boldsymbol{\tau}$
and $\boldsymbol{\tilde{\tau}}$ such that $n+k=|\boldsymbol{\tau}|+|\boldsymbol{\tilde{\tau}}|$,
\[
\langle X^{n}\star Y^{k},\boldsymbol{\tau}\boldsymbol{\tilde{\tau}}\rangle=\sum_{k_{1}+k_{2}=k}\langle X^{|\boldsymbol{\tau}|-k_{1}}\star Y^{k_{1}},\boldsymbol{\tau}\rangle\langle X^{|\boldsymbol{\tilde{\tau}}|-k_{2}}\star Y^{k_{2}},\boldsymbol{\tilde{\tau}}\rangle.
\]
\end{lem}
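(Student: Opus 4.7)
The plan is to compute the left-hand side directly by unfolding all three structural operations in sequence: the product $\cdot$ in $\mathcal{H}$, the coproduct $\triangle$, and the character property of $X, Y \in \mathcal{G}$. I would start by rewriting
\[
\langle X^{n}\star Y^{k},\boldsymbol{\tau}\boldsymbol{\tilde{\tau}}\rangle = \langle X^{n}\otimes Y^{k},\triangle(\boldsymbol{\tau}\boldsymbol{\tilde{\tau}})\rangle,
\]
and then invoking the multiplicativity of $\triangle$ on forests (recorded in the definition of $\triangle$ in Section 2 as $\triangle(\tau_1\ldots\tau_n) = \triangle\tau_1\ldots\triangle\tau_n$) to get $\triangle(\boldsymbol{\tau}\boldsymbol{\tilde{\tau}}) = \triangle\boldsymbol{\tau}\cdot\triangle\boldsymbol{\tilde{\tau}} = \sum \boldsymbol{\tau^{(1)}}\cdot\boldsymbol{\tilde{\tau}^{(1)}}\otimes \boldsymbol{\tau^{(2)}}\cdot\boldsymbol{\tilde{\tau}^{(2)}}$, where $\boldsymbol{\tau^{(i)}}$ ranges over the splittings of $\triangle\boldsymbol{\tau}$ and $\boldsymbol{\tilde{\tau}^{(i)}}$ ranges over the splittings of $\triangle\boldsymbol{\tilde{\tau}}$ independently.

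The next step is to use $X, Y \in \mathcal{G}$ to factorise the pairings. Since $X$ is a character with respect to $\cdot$, $\langle X,\boldsymbol{\tau^{(1)}}\cdot\boldsymbol{\tilde{\tau}^{(1)}}\rangle = \langle X,\boldsymbol{\tau^{(1)}}\rangle\langle X,\boldsymbol{\tilde{\tau}^{(1)}}\rangle$, and likewise for $Y$. The grading restriction encoded by $X^n$ (resp. $Y^k$) forces $|\boldsymbol{\tau^{(1)}}|+|\boldsymbol{\tilde{\tau}^{(1)}}| = n$ (resp. $|\boldsymbol{\tau^{(2)}}|+|\boldsymbol{\tilde{\tau}^{(2)}}| = k$); under the standing hypothesis $n+k = |\boldsymbol{\tau}|+|\boldsymbol{\tilde{\tau}}|$, these two constraints are equivalent to the single index equation $k_1+k_2 = k$, where $k_1 := |\boldsymbol{\tau^{(2)}}|$ and $k_2 := |\boldsymbol{\tilde{\tau}^{(2)}}|$.

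I would then reorganise the double sum by slicing it according to $(k_1,k_2)$. For each such pair the sum splits as a product of two independent sums, one over splittings of $\boldsymbol{\tau}$ with $|\boldsymbol{\tau^{(2)}}|=k_1$ and one over splittings of $\boldsymbol{\tilde{\tau}}$ with $|\boldsymbol{\tilde{\tau}^{(2)}}|=k_2$. Reading the definition of $\star$ backwards, each factor is precisely $\langle X^{|\boldsymbol{\tau}|-k_1}\star Y^{k_1},\boldsymbol{\tau}\rangle$ and $\langle X^{|\boldsymbol{\tilde{\tau}}|-k_2}\star Y^{k_2},\boldsymbol{\tilde{\tau}}\rangle$ respectively, yielding the claimed identity.

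There is no genuine analytic obstacle; the whole proof is a structural computation in the Hopf algebra. The only care required is bookkeeping: tracking the two grading constraints carried by $X^n$ and $Y^k$, verifying that they collapse into the single condition $k_1+k_2=k$, and making sure the coproduct sums over $\boldsymbol{\tau}$ and $\boldsymbol{\tilde{\tau}}$ genuinely decouple (which they do precisely because the character property turns the pairing of a product into a product of pairings). In effect, the lemma is just the statement that the convolution product $\star$ respects the graded Hopf-algebra structure when restricted to characters.
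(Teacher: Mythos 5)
Your proposal is correct and follows essentially the same route as the paper's proof: unfold $\star$ via the coproduct, use $\triangle(\boldsymbol{\tau}\boldsymbol{\tilde{\tau}})=\triangle\boldsymbol{\tau}\,\triangle\boldsymbol{\tilde{\tau}}$, apply the character property of $X$ and $Y$ to split the pairings, and regroup the sum by the sizes $(k_1,k_2)$ of the two second components. The bookkeeping you flag (that the grading constraints collapse to $k_1+k_2=k$) is exactly the point the paper also makes explicit.
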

\begin{proof}
By definition of $\star$ and that the coproduct $\triangle$ is compatible
with tree multiplication, 
\begin{eqnarray}
\langle X^{n}\star Y^{k},\boldsymbol{\tau}\boldsymbol{\tilde{\tau}}\rangle & = & \langle X^{n}\otimes Y^{k},\triangle(\boldsymbol{\tau}\boldsymbol{\tilde{\tau}})\rangle\nonumber \\
 & = & \langle X^{n}\otimes Y^{k},\triangle\boldsymbol{\tau}\triangle\boldsymbol{\tilde{\tau}}\rangle\nonumber \\
 & = & \sum_{|\boldsymbol{\tau}^{(2)}|+|\boldsymbol{\tilde{\tau}}^{(2)}|=k}\langle X^{n},\boldsymbol{\tau}^{(1)}\boldsymbol{\tilde{\tau}}^{(1)}\rangle\langle Y^{k},\boldsymbol{\tau}^{(2)}\boldsymbol{\tilde{\tau}}^{(2)}\rangle.\label{eq:coprodcut computation 1}
\end{eqnarray}
As $n+k=|\boldsymbol{\tau}|+|\boldsymbol{\tilde{\tau}}|$ and that
we are summing over $|\boldsymbol{\tau}^{(2)}|+|\boldsymbol{\tilde{\tau}}^{(2)}|=k$,
we have in particular that $n=|\boldsymbol{\tau}^{(1)}|+|\boldsymbol{\tilde{\tau}}^{(1)}|$
in the sum. As $X\in\mathcal{H}^{*}$, 
\begin{eqnarray*}
\langle X^{n},\boldsymbol{\tau}^{(1)}\boldsymbol{\tilde{\tau}}^{(1)}\rangle & = & \langle X,\boldsymbol{\tau}^{(1)}\boldsymbol{\tilde{\tau}}^{(1)}\rangle\\
 & = & \langle X,\boldsymbol{\tau}^{(1)}\rangle\langle X,\boldsymbol{\tilde{\tau}}^{(1)}\rangle\\
 & = & \langle X^{|\boldsymbol{\tau}^{(1)}|},\boldsymbol{\tau}^{(1)}\rangle\langle X^{|\tilde{\boldsymbol{\tau}}^{(1)}|},\boldsymbol{\tilde{\tau}}^{(1)}\rangle.
\end{eqnarray*}
Analogous expression also holds for $\langle Y^{k},\boldsymbol{\tau}^{(2)}\boldsymbol{\tilde{\tau}}^{(2)}\rangle$
with the same proof. Therefore, by our earlier calculation (\ref{eq:coprodcut computation 1})
and the definition of $\star$, 
\begin{eqnarray*}
 &  & \langle X^{n}\star Y^{k},\boldsymbol{\tau}\boldsymbol{\tilde{\tau}}\rangle\\
 & = & \sum_{|\boldsymbol{\tau}^{(2)}|+|\boldsymbol{\tilde{\tau}}^{(2)}|=k}\langle X^{|\boldsymbol{\tau}^{(1)}|},\boldsymbol{\tau}^{(1)}\rangle\langle X^{|\tilde{\boldsymbol{\tau}}^{(1)}|},\boldsymbol{\tilde{\tau}}^{(1)}\rangle\langle Y^{|\boldsymbol{\tau}^{(2)}|},\boldsymbol{\tau}^{(2)}\rangle\langle Y^{|\tilde{\boldsymbol{\tau}}^{(2)}|},\boldsymbol{\tilde{\tau}}^{(2)}\rangle\\
 & = & \sum_{k_{1}+k_{2}=k}\langle X^{|\boldsymbol{\tau}|-k_{1}}\star Y^{k_{1}},\boldsymbol{\tau}\rangle\langle X^{|\boldsymbol{\tilde{\tau}}|-k_{2}}\star Y^{k_{2}},\boldsymbol{\tilde{\tau}}\rangle.
\end{eqnarray*}

\end{proof}
The following lemma states that if we assume $X^{n}$ and $Y^{k}$
have factorial decay estimates, then the forest norm of $X^{n}\star Y^{k}$
can be bounded by the tree norms of $X^{n}$ and $Y^{k}$. 
\begin{lem}
\label{lem:Computation lemma-1}(Multiplication is bounded in forest
norm) Let $X,Y\in\mathcal{H}^{*}$. Let $k\geq0$ and 
\[
\beta\geq\exp\big[\sum_{i=1}^{k}k{}^{i}(1-\gamma)\big].
\]
If there exists $a>0$ and $b>0$ such that 
\[
\left\Vert X^{n}\right\Vert _{\mathcal{T},\gamma,\beta}\leq\frac{a^{\gamma n}}{n!^{\gamma}},\;\left\Vert Y^{k}\right\Vert _{\mathcal{T},\gamma,\beta}\leq\frac{b^{\gamma k}}{k!^{\gamma}},
\]
then 
\[
\left\Vert X^{n}\star Y^{k}\right\Vert _{\mathcal{F},\gamma,\tilde{c}_{k}^{-1}\beta}\leq\frac{a^{\gamma n}b^{\gamma k}}{\left(n!k!\right)^{\gamma}},
\]
where $\tilde{c}_{k}=c_{k}((k+1)\left|\mathcal{T}_{k}\right|){}^{1-\gamma}$.\end{lem}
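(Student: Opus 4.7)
The plan is to prove, by induction on $m:=c(\boldsymbol{\tau})$, the pointwise estimate
\[
|\langle X^{n}\star Y^{k},\boldsymbol{\tau}\rangle|\leq\tilde{c}_{k}^{m}\beta^{-m}\cdot\frac{a^{\gamma n}b^{\gamma k}}{(n!k!)^{\gamma}}\cdot\frac{|\boldsymbol{\tau}|!^{\gamma}}{\boldsymbol{\tau}!^{\gamma}}
\]
for every forest $\boldsymbol{\tau}$ with $|\boldsymbol{\tau}|=n+k$; this is equivalent to the forest-norm claim. As the $X,Y$ of interest are characters (which is automatic in the branched-rough-path setting where this lemma will be applied), I would first invoke Lemma~\ref{lem:factorial decay estimate} to upgrade the tree-norm hypotheses into the forest-norm bounds $\|X^{n}\|_{\mathcal{F},\gamma,\beta}\leq a^{\gamma n}/n!^{\gamma}$ and analogously for $Y$. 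These are precisely the inputs required by Lemma~\ref{lem:tree sum} and by the forest factorisation lemma (Lemma~\ref{lem:algebraic lemma for forest}).

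For the base case $m=1$ ($\boldsymbol{\tau}=\tau$ a tree), I would feed these forest-norm bounds into Lemma~\ref{lem:tree sum} to get
\[
|\langle X^{n}\star Y^{k},\tau\rangle|\leq c_{k}|\mathcal{T}^{k}|^{1-\gamma}\beta^{-2}\cdot\frac{a^{\gamma n}b^{\gamma k}}{(n!k!)^{\gamma}}\cdot\frac{|\tau|!^{\gamma}}{\tau!^{\gamma}}.
\]
Since $\tilde{c}_{k}=c_{k}((k+1)|\mathcal{T}^{k}|)^{1-\gamma}$, this is tighter than the target $\tilde{c}_{k}\beta^{-1}$ by a factor of $(k+1)^{1-\gamma}\beta$; this slack is exactly what the induction step will consume.

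For the induction step I would write $\boldsymbol{\tau}=\boldsymbol{\tau}'\cdot\tau_{m}$ with $c(\boldsymbol{\tau}')=m-1$ and use Lemma~\ref{lem:algebraic lemma for forest} to expand
\[
\langle X^{n}\star Y^{k},\boldsymbol{\tau}\rangle=\sum_{k_{1}+k_{2}=k}\langle X^{|\boldsymbol{\tau}'|-k_{1}}\star Y^{k_{1}},\boldsymbol{\tau}'\rangle\,\langle X^{|\tau_{m}|-k_{2}}\star Y^{k_{2}},\tau_{m}\rangle.
\]
Applying the induction hypothesis to the first factor (with $\tilde{c}_{k_{1}}\leq\tilde{c}_{k}$) and the sharper base-case bound to the second, each summand is at most $\tilde{c}_{k}^{m-1}c_{k}|\mathcal{T}^{k}|^{1-\gamma}\beta^{-(m+1)}$ times $\binom{|\boldsymbol{\tau}'|}{k_{1}}^{\gamma}\binom{|\tau_{m}|}{k_{2}}^{\gamma}a^{\gamma n}b^{\gamma k}/\boldsymbol{\tau}!^{\gamma}$. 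The classical concavity inequality $\sum a_{i}^{\gamma}\leq(k+1)^{1-\gamma}(\sum a_{i})^{\gamma}$ applied to the $k+1$ splittings, followed by Vandermonde's identity, yields
\[
\sum_{k_{1}+k_{2}=k}\binom{|\boldsymbol{\tau}'|}{k_{1}}^{\gamma}\binom{|\tau_{m}|}{k_{2}}^{\gamma}\leq(k+1)^{1-\gamma}\binom{|\boldsymbol{\tau}|}{k}^{\gamma},
\]
and the product $(k+1)^{1-\gamma}c_{k}|\mathcal{T}^{k}|^{1-\gamma}$ reassembles exactly into $\tilde{c}_{k}$; the leftover $\beta^{-1}$ is harmless because $\beta\geq c_{k}\geq 1$. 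The degenerate extremes $k_{2}\in\{0,|\tau_{m}|\}$ (where Lemma~\ref{lem:tree sum} formally requires $n\geq 1$) are checked directly: $\langle X^{n}\star Y^{0},\tau\rangle=\langle X,\tau\rangle$ and $\langle X^{0}\star Y^{k},\tau\rangle=\langle Y,\tau\rangle$, each bounded by the forest-norm hypothesis with the same target constant or better.

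The main obstacle is to prevent a combinatorial blow-up through the induction: each peeling step introduces a sum over $k+1$ splittings, and a naive accumulation would leave behind a factor $(k+1)^{m(1-\gamma)}$ that depends on the forest size and would wreck the uniformity in $m$. The scheme above succeeds because $\tilde{c}_{k}$ is calibrated (through the $(k+1)^{1-\gamma}$ factor explicitly built into its definition) to absorb exactly one extra $(k+1)^{1-\gamma}$ per induction step via the concavity-plus-Vandermonde estimate, while the sharp base case from Lemma~\ref{lem:tree sum} supplies the additional $\beta^{-1}$ of slack that keeps the exponent of $\beta$ from drifting and lets the induction close.
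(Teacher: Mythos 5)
Your proposal is correct and follows essentially the same route as the paper: induction on $c(\boldsymbol{\tau})$, with the base case supplied by Lemma \ref{lem:tree sum} (after upgrading the tree-norm hypotheses to forest norms via Lemma \ref{lem:factorial decay estimate}), and the induction step carried out by the forest factorisation lemma followed by the concavity estimate and Vandermonde's identity, with $\tilde{c}_{k}$ absorbing the extra $(k+1)^{1-\gamma}$ per step. Your explicit treatment of the degenerate splittings $k_{2}\in\{0,|\tau_{m}|\}$ and of the monotonicity $\tilde{c}_{k_{1}}\leq\tilde{c}_{k}$ only makes explicit what the paper leaves implicit.
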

\begin{proof}
We need to show that for all forests $\boldsymbol{\tau}$, and $\left(n,k\right)$
such that $n+k=|\boldsymbol{\tau}|$, 
\begin{equation}
\left|\langle X^{n}\star Y^{k},\boldsymbol{\tau}\rangle\right|\leq\frac{\tilde{c}_{k}^{c(\boldsymbol{\tau})}a^{\gamma n}b^{\gamma k}}{\beta^{c(\boldsymbol{\tau})}\boldsymbol{\tau}!^{\gamma}}{|\boldsymbol{\tau}| \choose k}^{\gamma}.\label{eq:factorial desired estimate}
\end{equation}
We shall prove it by induction on $c(\boldsymbol{\tau})$. If $n=0$,
then present lemma directly follows from assumption and we will henceforth
assume $n\geq1$. For $c(\boldsymbol{\tau})=1$, note that in this
case $\boldsymbol{\tau}$ is a tree. By the boundedness of group multiplication
in tree norm, Lemma \ref{lem:tree sum}, and that the tree norm of
$X$ is the same as the forest norm of $X$ (see Lemma \ref{lem:factorial decay estimate}),
\[
\left\Vert X^{n}\star Y^{k}\right\Vert _{\mathcal{T},\gamma,\beta}\leq c_{k}\big|\mathcal{T}^{k}\big|^{1-\gamma}\beta^{-1}\frac{a^{\gamma n}b^{\gamma k}}{n!^{\gamma}k!^{\gamma}},
\]
which implies our desired estimate (\ref{eq:factorial desired estimate})
in the case when $\boldsymbol{\tau}$ is a rooted tree. For the induction
step, let $\boldsymbol{\tau}=\tau_{1}\boldsymbol{\tau}_{2}$, where
$\tau_{1}$ is a non-empty tree and $\boldsymbol{\tau}_{2}$ is a
forest. If $n+k=\left|\boldsymbol{\tau}\right|$, then by the forest
factorisation lemma, Lemma \ref{lem:algebraic lemma for forest},
and the induction hypothesis, 
\begin{eqnarray*}
K & := & \left|\langle X^{n}\star Y^{k},\tau_{1}\boldsymbol{\tau}_{2}\rangle\right|\\
 & \leq & \sum_{l+m=k}|\langle X^{|\tau_{1}|-l}\star Y^{l},\tau_{1}\rangle|\,|\langle X^{|\boldsymbol{\tau}_{2}|-m}\star Y^{m},\boldsymbol{\tau}_{2}\rangle|\\
 & \leq & \frac{\tilde{c}_{k}{}^{c(\boldsymbol{\tau}_{2})}c_{k}\big|\mathcal{T}^{k}|^{1-\gamma}}{\beta^{c(\boldsymbol{\tau}_{2})+1}\left(\tau_{1}!\boldsymbol{\tau}_{2}!\right)^{\gamma}}a^{\gamma n}b^{\gamma k}\sum_{l+m=k}{|\tau_{1}| \choose l}^{\gamma}{|\boldsymbol{\tau}_{2}| \choose m}^{\gamma}.
\end{eqnarray*}
Using the conventional concavity estimate for sum $\sum_{i=1}^{M}a_{i}^{\gamma}\leq M^{1-\gamma}(\sum_{i=1}^{M}a_{i})^{\gamma}$
and that $\tilde{c}_{k}=c_{k}((k+1)|\mathcal{T}^{k}|)^{1-\gamma}$,

\begin{eqnarray*}
K & \leq & \frac{\tilde{c}_{k}{}^{c(\boldsymbol{\tau}_{2})}c_{k}\big|\mathcal{T}^{k}|^{1-\gamma}}{\beta^{c(\boldsymbol{\tau}_{2})+1}\left(\tau_{1}!\boldsymbol{\tau}_{2}!\right)^{\gamma}}a^{\gamma n}b^{\gamma k}(k+1)^{1-\gamma}\big(\sum_{l+m=k}{|\tau_{1}| \choose l}{|\boldsymbol{\tau}_{2}| \choose m}\big)^{\gamma}\\
 & \leq & \frac{\tilde{c}_{k}{}^{(c(\boldsymbol{\tau}_{2})+1)}}{\beta^{c(\boldsymbol{\tau}_{2})+1}\left(\tau_{1}!\boldsymbol{\tau}_{2}!\right)^{\gamma}}{|\tau_{1}|+|\boldsymbol{\tau}_{2}| \choose k}^{\gamma}a^{\gamma n}b^{\gamma k},
\end{eqnarray*}
which in particular implies our desired estimate (\ref{eq:factorial desired estimate}).
\end{proof}
We will also need the following binomial lemma that describes the
product of our estimate over $\left[s,t\right]$ with the length of
the overlapping interval $\left[u,t\right]$, where $u\leq s\leq t$. 
\begin{lem}
\label{lem:binomial lemma for overlapping intervals}(Binomial lemma
for overlapping time intervals) Let $N\in\mathbb{N}\cup\left\{ 0\right\} $.
Let $n\geq N+1$ and $m\geq0$, and $u\leq s\leq t$, 
\begin{eqnarray}
 &  & \frac{n!}{\left(n-N-1\right)!}S^{\left(N+1\right)}\left(\rho_{u}^{\frac{n}{N+1}}\right)_{s,t}\left(t-u\right)^{m}\label{eq:tree multiply}\\
 & \leq & \frac{\left(n+m\right)!}{\left(n+m-N-1\right)!}S^{\left(N+1\right)}\left(\rho_{u}^{\frac{n+m}{N+1}}\right)_{s,t}.
\end{eqnarray}

\end{lem}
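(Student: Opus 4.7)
The key observation is that $\rho_u^b$ is a one-dimensional (scalar-valued) path, so its $(N+1)$-th iterated integral collapses to a power of the increment divided by $(N+1)!$. Explicitly,
\[
S^{(N+1)}\bigl(\rho_u^b\bigr)_{s,t}=\frac{\bigl(\rho_u^b(t)-\rho_u^b(s)\bigr)^{N+1}}{(N+1)!}=\frac{\bigl((t-u)^b-(s-u)^b\bigr)^{N+1}}{b^{N+1}(N+1)!}.
\]
The first step of the proof will be to write down this identity (either by direct iterated integration, as sketched in the proof of Lemma~\ref{Taylor binomial}, or by noting that a 1D smooth path $\rho$ satisfies $S^{(k)}(\rho)_{s,t}=(\rho(t)-\rho(s))^k/k!$).

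Plugging in $b=n/(N+1)$ on the left-hand side of (\ref{eq:tree multiply}) and $b=(n+m)/(N+1)$ on the right, and cancelling a common factor of $(N+1)^{N+1}/(N+1)!$, the desired inequality reduces to
\[
\frac{n(n-1)\cdots(n-N)}{n^{N+1}}\bigl(A^{n/(N+1)}-B^{n/(N+1)}\bigr)^{N+1}A^{m}
\leq
\frac{(n+m)(n+m-1)\cdots(n+m-N)}{(n+m)^{N+1}}\bigl(A^{(n+m)/(N+1)}-B^{(n+m)/(N+1)}\bigr)^{N+1},
\]
where $A=t-u$ and $B=s-u$, so that $0\le B\le A$.

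I would then split this into two elementary inequalities. For the combinatorial prefactor, I write
\[
\frac{n(n-1)\cdots(n-N)}{n^{N+1}}=\prod_{j=0}^{N}\Bigl(1-\frac{j}{n}\Bigr),
\]
and since each factor is non-decreasing in $n$, one obtains $\prod_{j=0}^{N}(1-j/n)\le\prod_{j=0}^{N}(1-j/(n+m))$ for $m\ge 0$. For the bracketed factor, after extracting $A^{m}=\bigl(A^{m/(N+1)}\bigr)^{N+1}$ and raising to the power $1/(N+1)$, it suffices to verify
\[
\bigl(A^{n/(N+1)}-B^{n/(N+1)}\bigr)A^{m/(N+1)}\le A^{(n+m)/(N+1)}-B^{(n+m)/(N+1)}.
\]
Expanding the left side as $A^{(n+m)/(N+1)}-B^{n/(N+1)}A^{m/(N+1)}$, this reduces to $B^{n/(N+1)}A^{m/(N+1)}\ge B^{(n+m)/(N+1)}$, equivalently $A^{m/(N+1)}\ge B^{m/(N+1)}$, which is immediate from $0\le B\le A$ and $m\ge 0$.

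There is no real obstacle here: the whole lemma is essentially a one-line reduction once one recognises that the scalar nature of $\rho_u^b$ gives a closed form for $S^{(N+1)}(\rho_u^b)_{s,t}$. The only mildly subtle point is that the naive bound $(s_i-u)^{m/(N+1)}\le(t-u)^{m/(N+1)}$ goes the wrong way, so one must avoid moving $(t-u)^m$ under the integral and instead use the exact formula before comparing the two sides.
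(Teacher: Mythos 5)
Your proof is correct and follows essentially the same route as the paper: both reduce to the closed form $S^{(N+1)}(\rho_u^b)_{s,t}=\bigl((t-u)^b-(s-u)^b\bigr)^{N+1}/\bigl(b^{N+1}(N+1)!\bigr)$, then handle the bracketed factor via the elementary inequality $c(b-a)\leq bc-ad$ (your $A^{m/(N+1)}\geq B^{m/(N+1)}$ reduction) and the combinatorial prefactor via the monotonicity of $\prod_{j=0}^{N}(1-j/n)$ in $n$. No gaps.
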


The proof of Lemma \ref{lem:binomial lemma for overlapping intervals}
can be found in the Appendix.

\begin{proof}[Proof of the compatibility with tree multiplication, Lemma 20]We
need to show that for all forests $\boldsymbol{\tau}$, 
\begin{eqnarray*}
\big|\langle\sum_{k\geq N+1}X_{u,s}^{n-k}\star X^{k},\boldsymbol{\tau}\rangle\big| & \leq & \big(\frac{\hat{c}_{N}}{\beta}\big)^{c\left(\boldsymbol{\tau}\right)}\big[\frac{|\boldsymbol{\tau}|!}{\boldsymbol{\tau}!\left(|\boldsymbol{\tau}|-N-1\right)!}S^{\left(N+1\right)}\left(\rho_{u}^{\frac{|\boldsymbol{\tau}|}{N+1}}\right)_{s,t}\big]^{\gamma}
\end{eqnarray*}
and will do so via induction on $c\left(\boldsymbol{\tau}\right)$.
The case $c\left(\boldsymbol{\tau}\right)=1$ follows directly from
the assumption. 

By putting $u=s$ in the assumed estimate (\ref{eq:tree estimate})
for trees and using $\left(N+1\right)^{N+1}\left[\left(N+1\right)!\right]^{-1}\leq\exp\left(N+1\right)$,
we have that for all forests $\boldsymbol{\tau}$ with $|\boldsymbol{\tau}|\leq M$,
\begin{equation}
\left|\left\langle X_{s,t},\boldsymbol{\tau}\right\rangle \right|\leq\frac{C_{N}^{c\left(\boldsymbol{\tau}\right)}\left(t-s\right)^{\gamma\left|\boldsymbol{\tau}\right|}}{\beta^{c\left(\boldsymbol{\tau}\right)}\boldsymbol{\tau}!^{\gamma}}\label{eq:deduction}
\end{equation}
where $C_{N}=\exp\left(N+1\right)$ and hence for all $n\leq M$ and
all $s\leq t$, 
\[
\Vert X_{s,t}^{n}\Vert_{\mathcal{F},\gamma,\beta C_{N}^{-1}}\leq\frac{\left(t-s\right)^{\gamma n}}{n!^{\gamma}}.
\]
The estimate (\ref{eq:deduction}) in particular says that our estimate
is a factorial decay estimate.

For the induction step, we let $\tau_{1}$ be a non-empty rooted tree
and $\boldsymbol{\tau}_{2}$ be a forest such that $\boldsymbol{\tau}=\tau_{1}\boldsymbol{\tau}_{2}$.
By the forest factorisation lemma, Lemma \ref{lem:algebraic lemma for forest},
\begin{eqnarray}
 &  & \langle\sum_{k\geq N+1}X_{u,s}^{n-k}\star X_{s,t}^{k},\tau_{1}\boldsymbol{\tau}_{2}\rangle\nonumber \\
 & = & \sum_{l+m\geq N+1}\langle X_{u,s}^{|\tau_{1}|-l}\star X_{s,t}^{l},\tau_{1}\rangle\langle X_{u,s}^{|\boldsymbol{\tau}_{2}|-m}\star X_{s,t}^{m},\boldsymbol{\tau}_{2}\rangle\nonumber \\
 & = & \big(\sum_{l\geq N+1}+\sum_{\substack{l\leq N\\
m\geq N+1
}
}+\sum_{\substack{l,m\leq N\\
N+1-l\leq m
}
}\big)\langle X_{u,s}^{|\tau_{1}|-l}\star X_{s,t}^{l},\tau_{1}\rangle\langle X_{u,s}^{|\boldsymbol{\tau}_{2}|-m}\star X_{s,t}^{m},\boldsymbol{\tau}_{2}\rangle.\label{eq:breaking up forest}
\end{eqnarray}
We will denote the three terms in this decomposition (\ref{eq:breaking up forest})
as $K_{1},K_{2}$ and $K_{3}$ respectively. Using the assumed estimate
(\ref{eq:tree estimate}) for trees and factorial decay estimate (\ref{eq:deduction}),
\begin{eqnarray}
K_{1} & := & \big|\sum_{l\geq N+1}\langle X_{u,s}^{|\tau_{1}|-l}\star X_{s,t}^{l},\tau_{1}\rangle\langle X_{u,s}^{|\boldsymbol{\tau}_{2}|-m}\star X_{s,t}^{m},\boldsymbol{\tau}_{2}\rangle\big|.\label{eq:first step tree to forest}\\
 & \leq & \frac{C_{N}^{c(\boldsymbol{\tau}_{2})}}{\beta^{c(\boldsymbol{\tau})}}\Big[\frac{|\tau_{1}|!}{\tau_{1}!\left(|\tau_{1}|-N-1\right)!}S^{\left(N+1\right)}\Big(\rho_{u}^{\frac{|\tau_{1}|}{N+1}}\Big)_{s,t}\frac{\left(t-u\right)^{|\boldsymbol{\tau}_{2}|}}{\boldsymbol{\tau_{2}}!}\Big]^{\gamma}.
\end{eqnarray}
Using the binomial lemma for overlapping intervals, Lemma \ref{lem:binomial lemma for overlapping intervals},
\begin{equation}
K_{1}\leq\frac{C_{N}^{c\left(\boldsymbol{\tau}_{2}\right)}}{\beta^{c\left(\boldsymbol{\tau}\right)}}\Big[\frac{|\boldsymbol{\tau}|!}{\boldsymbol{\tau}!\left(|\boldsymbol{\tau}|-N-1\right)!}S^{\left(N+1\right)}\left(\rho_{u}^{\frac{\left|\boldsymbol{\tau}\right|}{N+1}}\right)_{s,t}\Big]^{\gamma}.\label{eq:K1 estime}
\end{equation}
We now estimate the second term in the decomposition (\ref{eq:breaking up forest}).
Using that multiplication is bounded in tree norm (see Lemma \ref{lem:Computation lemma-1},
applicable as $\beta\geq\exp\big(\sum_{i=1}^{N}N^{i}$$\big)$) and
the factorial decay estimate (\ref{eq:deduction}),

\begin{eqnarray}
\big|\sum_{l\leq N}\langle X_{u,s}^{|\tau_{1}|-l}\star X_{s,t}^{l},\tau_{1}\rangle\big| & \leq & \frac{\tilde{c}_{N}C_{N}^{2}}{\beta\tau_{1}!^{\gamma}}\sum_{l\leq N}{|\tau_{1}| \choose l}^{\gamma}\left(s-u\right)^{\gamma\left(|\tau_{1}|-l\right)}\left(t-s\right)^{\gamma l}\nonumber \\
 & \leq & \frac{\tilde{c}_{N}C_{N}^{2}(N+1)^{1-\gamma}}{\beta\tau_{1}!^{\gamma}}\Big(\sum_{l\leq N}{|\tau_{1}| \choose l}\left(s-u\right)^{\left(|\tau_{1}|-l\right)}\left(t-s\right)^{l}\Big)^{\gamma}\\
 & \leq & \frac{\tilde{c}_{N}C_{N}^{2}(N+1)^{1-\gamma}}{\beta\tau_{1}!^{\gamma}}\left(t-u\right)^{\gamma|\tau_{1}|}.\label{eq:int cal 3}
\end{eqnarray}
Applying the induction hypothesis and (\ref{eq:int cal 3}),
\begin{eqnarray*}
K_{2} & := & \big|\sum_{l\leq N,m\geq N+1}\langle X_{u,s}^{|\tau_{1}|-l}\star X_{s,t}^{l},\tau_{1}\rangle\langle X_{u,s}^{|\boldsymbol{\tau}_{2}|-m}\star X_{s,t}^{m},\boldsymbol{\tau}_{2}\rangle\big|.\\
 & \leq & \frac{\tilde{c}_{N}C_{N}^{2}(N+1)^{1-\gamma}\hat{c}_{N}^{c(\boldsymbol{\tau}_{2})}}{\beta^{c(\boldsymbol{\tau})}}\Big[\frac{|\boldsymbol{\tau}_{2}|!\left(t-u\right)^{|\tau_{1}|}}{\boldsymbol{\tau}_{2}!\left(|\boldsymbol{\tau}_{2}|-N-1\right)!\tau_{1}!}S^{\left(N+1\right)}\left(\rho_{u}^{\frac{|\boldsymbol{\tau}_{2}|}{N+1}}\right)_{s,t}\Big]^{\gamma}.
\end{eqnarray*}
 By the binomial lemma for overlapping intervals (\ref{eq:tree multiply})
\begin{eqnarray}
 &  & K_{2}\leq\frac{\tilde{c}_{N}C_{N}^{2}(N+1)^{1-\gamma}\hat{c}_{N}^{c(\boldsymbol{\tau}_{2})}}{\beta^{c(\boldsymbol{\tau})}}\Big[\frac{\left|\boldsymbol{\tau}\right|!}{\boldsymbol{\tau}!\left(\left|\boldsymbol{\tau}\right|-N-1\right)!}S^{\left(N+1\right)}\left(\rho_{u}^{\frac{\left|\boldsymbol{\tau}\right|}{N+1}}\right)_{s,t}\Big]^{\gamma}.\label{eq:int cal 5}
\end{eqnarray}
Finally, we estimate the third term in the decomposition (\ref{eq:breaking up forest})
at the beginning of this proof. By factorial decay estimate (\ref{eq:deduction})
and applying Lemma \ref{lem:Computation lemma-1}, which asserts that
the multiplication is bounded in forest norm for factorial decay estimates,

\begin{eqnarray*}
K_{3} & := & \big|\sum_{\substack{l,m\leq N\\
N+1-l\leq m
}
}\langle X_{u,s}^{|\tau_{1}|-l}\star X_{s,t}^{l},\tau_{1}\rangle\langle X_{u,s}^{|\boldsymbol{\tau}_{2}|-m}\star X_{s,t}^{m},\boldsymbol{\tau}_{2}\rangle\big|\\
 & \leq & \sum_{\substack{l,m\leq N\\
N+1-l\leq m
}
}\big(\frac{c_{1,N}}{\beta}\big)^{c\left(\boldsymbol{\tau}\right)}\big({|\tau_{1}| \choose l}{|\boldsymbol{\tau}_{2}| \choose m}\frac{\left(s-u\right)^{|\boldsymbol{\tau}|-l-m}\left(t-s\right)^{l+m}}{\tau_{1}!\boldsymbol{\tau}_{2}!}\big)^{\gamma}
\end{eqnarray*}
where $c_{1,N}=\tilde{c}_{N}C_{N}^{2}$. By the conventional concavity
estimate for sum $\sum_{i=1}^{M}a_{i}^{\gamma}\leq M^{1-\gamma}(\sum_{i=1}^{m}a_{i})^{\gamma}$,
\[
K_{3}\leq\left(\frac{c_{2,N}}{\beta}\right)^{c\left(\boldsymbol{\tau}\right)}(\sum_{\substack{l,m\leq N\\
N+1-l\leq m
}
}{|\tau_{1}| \choose l}{|\boldsymbol{\tau}_{2}| \choose m}\frac{\left(s-u\right)^{|\boldsymbol{\tau}|-l-m}\left(t-s\right)^{l+m}}{\tau_{1}!\boldsymbol{\tau}_{2}!}\big)^{\gamma},
\]
where $c_{2,N}=(N+1)^{2\left(1-\gamma\right)}\tilde{c}_{N}C_{N}^{2}$.
Using the binomial identity $\sum_{l+m=k}{M_{1} \choose l}{M_{2} \choose m}=\sum{M_{1}+M_{2} \choose k}$,
\[
K_{3}\leq\left(\frac{c_{2,N}}{\beta}\right)^{c\left(\boldsymbol{\tau}\right)}\big(\sum_{N+1\leq k}\frac{\left(s-u\right)^{\left|\boldsymbol{\tau}\right|-k}\left(t-s\right)^{k}}{\tau_{1}!\boldsymbol{\tau}_{2}!}{|\boldsymbol{\tau}| \choose k}\big)^{\gamma}.
\]
As our estimate dominates the tail of the binomial sum (see Lemma
\ref{Taylor binomial}),
\begin{equation}
K_{3}\leq\left(\frac{c_{2,N}}{\beta}\right)^{c\left(\boldsymbol{\tau}\right)}\left[\frac{\left|\boldsymbol{\tau}\right|!}{\boldsymbol{\tau}!\left(\left|\boldsymbol{\tau}\right|-N-1\right)!}S^{\left(N+1\right)}\left(\rho_{u}^{\frac{\left|\boldsymbol{\tau}\right|}{N+1}}\right)_{s,t}\right]^{\gamma}.\label{eq:small sum}
\end{equation}
 Therefore, substituting the estimates for $K_{1}$ (\ref{eq:K1 estime}),
$K_{2}$ (\ref{eq:int cal 5}) and $K_{3}$ (\ref{eq:small sum})
into the decomposition (\ref{eq:breaking up forest}), we have 
\begin{eqnarray*}
 &  & \big|\langle\sum_{k\geq N+1}X_{u,s}^{|\boldsymbol{\tau}|-k}\star X_{s,t}^{k},\boldsymbol{\tau}\rangle\big|\\
 & \leq & \beta^{-c(\boldsymbol{\tau})}\big(C_{N}^{c(\boldsymbol{\tau})}+\big((N+1)^{2(1-\gamma)}\tilde{c}_{N}C_{N}^{2}\big)^{c(\boldsymbol{\tau})}+\tilde{c}_{N}C_{N}^{2}(N+1)^{1-\gamma}\hat{c}_{N}^{c(\boldsymbol{\tau})-1}\big)\\
 &  & \times\big(\frac{\left|\boldsymbol{\tau}\right|!}{\boldsymbol{\tau}!\left(\left|\boldsymbol{\tau}\right|-N-1\right)!}S^{\left(N+1\right)}\left(\rho_{u}^{\frac{\left|\boldsymbol{\tau}\right|}{N+1}}\right)_{s,t}\big)^{\gamma}
\end{eqnarray*}
and the Lemma follows by $3\tilde{c}_{N}C_{N}^{2}(N+1)^{2\left(1-\gamma\right)}\leq\hat{c}_{N}$.
\end{proof}

\section{The proof}

Let $X$ be a $\gamma-$branched rough path and let $N=\lfloor\gamma^{-1}\rfloor$.
We will use the following identity that is implicit in Gubinelli's
construction of iterated integrals of branched rough path (see Theorem
7.3 in \cite{RamificationofBranchedRoughPath10}) 
\begin{equation}
X_{s,t}^{n+1}=\lim_{\left|\mathcal{P}\right|\rightarrow0}\sum_{i=0}^{m-1}\sum_{k=1}^{N}X_{s,t_{i}}^{n+1-k}\star X_{t_{i},t_{i+1}}^{k},\label{eq:induction relation}
\end{equation}
for $n\geq N+1$, where the limit is taken as the mesh size $\max_{t_{i}\in\mathcal{P}}\left|t_{i+1}-t_{i}\right|$
of the partition 
\[
\mathcal{P}=\left(0=t_{0}<\ldots<t_{m}=1\right)
\]
goes to zero. Alternatively, one can check directly that the limit
on the right hand side of (\ref{eq:induction relation}) converges,
has the multiplicative property and is $\gamma$-Hölder, which by
Theorem 7.3 in \cite{RamificationofBranchedRoughPath10} would imply
(\ref{eq:induction relation}). We will estimate the double sum on
the right hand side in (\ref{eq:induction relation}) by dropping
points successively from the partition $\mathcal{P}$. The following
lemma carries out the algebra of removing partition points from a
Riemann sum.
\begin{lem}
\label{drop point algebra}Let $X$ be a $\gamma-$branched rough
path and let $N=\lfloor\gamma^{-1}\rfloor$. For each partition $\mathcal{P}$
of the interval $\left[s,t\right]$, define $X^{\mathcal{P},n}:\left[0,1\right]\times\left[0,1\right]\rightarrow\mathcal{H}^{*}$
such that
\end{lem}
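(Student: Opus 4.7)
My plan is to treat this as a purely algebraic identity for the finite double sum $X^{\mathcal{P},n}$ and derive a drop-point formula using only the multiplicativity axiom (2) of Definition~\ref{branched rough path definition} and associativity of $\star$. Based on the surrounding discussion (the expression of $X^{n+1}_{s,t}$ as a limit of double sums, and the statement that the lemma ``carries out the algebra of removing partition points''), I anticipate the lemma defines
\[
X^{\mathcal{P},n}_{u,v} \;=\; \sum_{i:\,[t_i,t_{i+1}]\subset[u,v]}\sum_{k=1}^{N} X^{n-k}_{u,t_i}\star X^{k}_{t_i,t_{i+1}},
\]
and exhibits $X^{\mathcal{P}\setminus\{t_j\},n}_{s,t}-X^{\mathcal{P},n}_{s,t}$ as a signed sum of triple $\star$-products of the form $X^{a}_{s,t_{j-1}}\star X^{b}_{t_{j-1},t_j}\star X^{c}_{t_j,t_{j+1}}$ constrained to $b+c\geq N+1$, mirroring Lyons' 1994 drop-point identity in the tensor-algebra setting.

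To prove this, I would first observe that only the outer-sum contributions indexed by $i=j-1$ and $i=j$ are disturbed when $t_j$ is deleted, so that the difference reduces to
\[
\sum_{k=1}^{N} X^{n-k}_{s,t_{j-1}}\star X^{k}_{t_{j-1},t_{j+1}} \;-\; \sum_{k=1}^{N} X^{n-k}_{s,t_{j-1}}\star X^{k}_{t_{j-1},t_j} \;-\; \sum_{k=1}^{N} X^{n-k}_{s,t_j}\star X^{k}_{t_j,t_{j+1}}.
\]
Next I would invoke multiplicativity on the graded level, namely $X^{k}_{t_{j-1},t_{j+1}}=\sum_{a+b=k}X^{a}_{t_{j-1},t_j}\star X^{b}_{t_j,t_{j+1}}$ and $X^{n-k}_{s,t_j}=\sum_{a+b=n-k}X^{a}_{s,t_{j-1}}\star X^{b}_{t_{j-1},t_j}$, substitute into the three sums, and reindex each on the common triple basis $(a,b,c)$ with $a+b+c=n$. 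A direct coefficient count should then show that every triple with $b+c\leq N$ cancels, leaving exactly the triples with $b+c\geq N+1$, which is the desired identity.

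The substantive difficulty is not conceptual but combinatorial bookkeeping. In particular one has to be careful with degenerate cases: $b=0$ or $c=0$ corresponds to the unit $X^{0}=1$ in one slot of a triple product, and these marginal triples must be counted only once, otherwise the cancellation pattern breaks. Associativity of $\star$ (the dual of coassociativity of $\triangle$ on $\mathcal{H}$) is needed silently throughout to make the triple products well-defined. No analytic estimate is used in this lemma; its sole purpose is to feed the constraint $b+c\geq N+1$ into the norm estimates of Sections~4 and~5 (Lemmas~\ref{lem:tree sum} and~\ref{tree to forest}), which is exactly the regime in which those estimates produce the super-polynomial decay required for Theorem~\ref{thm:main result}.
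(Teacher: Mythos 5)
Your proposal is correct and follows essentially the same route as the paper: isolate the three disturbed contributions at $i=j-1$ and $i=j$, expand $X^{k}_{t_{j-1},t_{j+1}}$ and $X^{n-k}_{s,t_{j}}$ by graded multiplicativity, and check that all triples with $b+c\leq N$ cancel, leaving exactly those with $1\leq c\leq N$ and $b+c\geq N+1$. The paper's only additional steps are the observation that the resulting difference vanishes unless $k\geq N+1$ (so the restricted sum over $k\geq N+1$ equals the full sum) and one further application of multiplicativity to absorb $X^{n-k}_{u,s}\star X^{k-l_2-l_3}_{s,t_{j-1}}$ into $X^{n-l_2-l_3}_{u,t_{j-1}}$, which is the same bookkeeping.
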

\[
X_{s,t}^{\mathcal{P},n}=\sum_{t_{i}\in\mathcal{P}}\sum_{1\leq k\leq N}X_{s,t_{i}}^{n-k}\star X_{t_{i},t_{i+1}}^{k}
\]
Then for any $t_{j}\in\mathcal{P}$, 
\begin{eqnarray}
 &  & \sum_{k\geq N+1}X_{u,s}^{n-k}\star\big(X_{s,t}^{\mathcal{P},k}-X_{s,t}^{\mathcal{P}\backslash\{t_{j}\},k}\big)\label{eq:drop points}\\
 & = & \sum_{\substack{k_{2}+k_{3}\geq N+1\\
1\leq k_{3}\leq N
}
}X_{u,t_{j-1}}^{n-k_{2}-k_{3}}\star X_{t_{j-1},t_{j}}^{k_{2}}\star X_{t_{j},t_{j+1}}^{k_{3}}.
\end{eqnarray}

\begin{proof}
Note first that 
\begin{eqnarray*}
X_{s,t}^{\mathcal{P},k}-X_{s,t}^{\mathcal{P}\backslash\{t_{j}\},k} & = & \sum_{1\leq l\leq N}X_{s,t_{j-1}}^{k-l}\star X_{t_{j-1},t_{j}}^{l}+X_{s,t_{j}}^{k-l}\star X_{t_{j},t_{j+1}}^{l}-X_{s,t_{j-1}}^{k-l}\star X_{t_{j-1},t_{j+1}}^{l}.
\end{eqnarray*}
By applying the multiplicativity of $X$ to the third term, 
\begin{eqnarray}
X_{s,t}^{\mathcal{P},k}-X_{s,t}^{\mathcal{P}\backslash\{t_{j}\},k} & = & \sum_{\substack{1\leq l_{3}\leq N\\
l_{2}\geq N+1-l_{3}
}
}X_{s,t_{j-1}}^{k-l_{2}-l_{3}}\star X_{t_{j-1},t_{j}}^{l_{2}}\star X_{t_{j},t_{j+1}}^{l_{3}}.\label{eq:substract a point}
\end{eqnarray}
From this, we observe that $X_{s,t}^{\mathcal{P},k}-X_{s,t}^{\mathcal{P}\backslash\{t_{j}\},k}$
is nonzero only when $k\geq N+1$. Therefore, 
\begin{eqnarray}
\sum_{k\geq N+1}X_{u,s}^{n-k}\star\big(X_{s,t}^{\mathcal{P},k}-X_{s,t}^{\mathcal{P}\backslash\{t_{j}\},k}\big) & = & \sum_{k}X_{u,s}^{n-k}\star\big(X_{s,t}^{\mathcal{P},k}-X_{s,t}^{\mathcal{P}\backslash\{t_{j}\},k}\big).\label{eq:after cancellation}
\end{eqnarray}
By substituting (\ref{eq:substract a point}) into (\ref{eq:after cancellation})
and applying the associativity of $\star$, we see that 
\[
\sum_{k\geq N+1}X_{u,s}^{n-k}\star\big(X_{s,t}^{\mathcal{P},k}-X_{s,t}^{\mathcal{P}\backslash\{t_{j}\},k}\big)=\sum_{\substack{1\leq l_{3}\leq N\\
l_{2}\geq N+1-l_{3}
}
}X_{u,t_{j-1}}^{n-l_{2}-l_{3}}\star X_{t_{j-1},t_{j}}^{l_{2}}X_{t_{j},t_{j+1}}^{l_{3}}.
\]

\end{proof}
We now once again require some binomial-type lemmas which we will
prove in the Appendix. The following says that our estimate is decreasing
in some sense.
\begin{lem}
\label{lem:For-all-decreasing}For all $0\leq k\leq m\leq n$ and
$u\leq s\leq t$, 
\begin{equation}
\frac{1}{\left(n-m\right)!}S^{\left(m\right)}\left(\rho_{u}^{\frac{n}{m}}\right)_{s,t}\leq\frac{\exp m}{\left(n-m+k\right)!}S^{\left(m-k\right)}\left(\rho_{u}^{\frac{n}{m-k}}\right)_{s,t}.\label{eq:decreasing}
\end{equation}

\end{lem}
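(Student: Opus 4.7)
The plan is to exploit the fact that $\rho_u^b(r) = (r-u)^b/b$ is a smooth one-dimensional path, so that its iterated integral over the ordered $m$-simplex collapses by Fubini and symmetry to a closed form:
\[
S^{(m)}(\rho_u^b)_{s,t} = \frac{1}{m!}\Bigl(\int_s^t (r-u)^{b-1}\,dr\Bigr)^m = \frac{\bigl((t-u)^b - (s-u)^b\bigr)^m}{m!\, b^m}.
\]
Substituting $b = n/m$ on the left of (\ref{eq:decreasing}) and $b = n/(m-k)$ on the right reduces (\ref{eq:decreasing}) to an explicit algebraic inequality in $A := t-u$ and $B := s-u$.

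Next I handle the $A,B$-dependence using the elementary inequality $(X-Y)^r \leq X^r - Y^r$, valid for $X \geq Y \geq 0$ and $r \geq 1$, which is proved in two lines by noting that $f(X) := X^r - Y^r - (X-Y)^r$ vanishes at $X=Y$ and has $f'(X) = r\bigl(X^{r-1} - (X-Y)^{r-1}\bigr) \geq 0$. Applied with $X = A^{n/m}$, $Y = B^{n/m}$, and $r = m/(m-k) \geq 1$, and then raised to the $(m-k)$-th power, this gives
\[
(A^{n/m} - B^{n/m})^m \leq (A^{n/(m-k)} - B^{n/(m-k)})^{m-k},
\]
so the $A,B$-dependent factors cancel on both sides.

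What remains is the purely combinatorial inequality
\[
\exp(m) \cdot \frac{m!(n-m)!}{(n-m+k)!(m-k)!} \cdot \frac{n^k (m-k)^{m-k}}{m^m} \geq 1.
\]
Since $n-m+j \leq n$ for $1 \leq j \leq k \leq m$, we have $(n-m+k)!/(n-m)! \leq n^k$, and it suffices to establish the cleaner bound $g_k \leq e^m$ where
\[
g_k := \frac{m^m (m-k)!}{m!(m-k)^{m-k}}.
\]
One has $g_0 = 1$, and for $0 \leq k \leq m-2$ direct calculation gives
\[
\frac{g_{k+1}}{g_k} = \Bigl(1 + \frac{1}{m-k-1}\Bigr)^{m-k-1} < e,
\]
while $g_m = m^m/m! \leq e^m$ follows from the Taylor-series bound $e^m \geq m^m/m!$. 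Telescoping the ratios yields $g_k \leq e^k \leq e^m$ throughout the range $0 \leq k \leq m$.

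The main obstacle is this last combinatorial step: the naive estimate $(m/(m-k))^{m-k} \leq e^k$ loses control when $k$ approaches $m$ and fails to deliver the uniform constant $e^m$; the telescoping via $g_{k+1}/g_k < e$ is what keeps the $e^m$ bound uniform in $k$.
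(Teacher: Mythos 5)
Your proof is correct and follows essentially the same route as the paper: the closed form $S^{(m)}(\rho_u^b)_{s,t}=\bigl((t-u)^b-(s-u)^b\bigr)^m/(m!\,b^m)$, the inequality $(X-Y)^r\leq X^r-Y^r$ for $r\geq 1$ applied with $r=m/(m-k)$, and then an elementary combinatorial bound. The only difference is in that last step, where the paper dispenses with your telescoping argument by simply using $m^m/m!\leq e^m$ together with $(m-k)!/(m-k)^{m-k}\leq 1$ and $(n-m+k)!/(n-m)!\leq n^k$.
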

The following is at the heart of the proof of our main result. It
describes the product of our estimates over adjacent time intervals. 
\begin{lem}
\label{lem:binomial lemma for adjacent intervals}(Binomial lemma
for adjacent intervals) Let $0\leq k\leq m\leq n$ and $u\leq s\leq t\leq v$,
then 
\begin{eqnarray}
S^{\left(m-k\right)}\left(\rho_{u}^{\frac{n}{m-k}}\right)_{s,t}\frac{\left(v-t\right)^{k}}{k!} & \leq & S^{\left(m-k\right)}\left(\rho_{u}^{\frac{n+k}{m}}\right)_{s,t}S^{\left(k\right)}\left(\rho_{u}^{\frac{n+k}{m}}\right)_{t,v}.\label{eq:transfer of power-1}
\end{eqnarray}
\end{lem}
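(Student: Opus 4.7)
The plan is to unwrap both sides as integrals of nonnegative functions over a common ordered simplex, then reduce the claim to a pointwise integrand inequality. Since $\rho_u^b(x) = (x-u)^b/b$, we have $\mathrm{d}\rho_u^b(x) = (x-u)^{b-1}\,\mathrm{d}x$, and hence
\[
S^{(j)}\bigl(\rho_u^b\bigr)_{p,q} = \int_{\triangle_j(p,q)} \prod_{i=1}^{j}(x_i - u)^{b-1}\,\mathrm{d}x_1 \cdots \mathrm{d}x_j.
\]
Combined with $(v-t)^k/k! = \int_{\triangle_k(t,v)} \mathrm{d}r_1 \cdots \mathrm{d}r_k$, both sides of the claimed inequality become integrals of nonnegative integrands over the common product simplex
\[
\Omega := \triangle_{m-k}(s,t)\times\triangle_k(t,v) = \{\,s < s_1 < \cdots < s_{m-k} < t < r_1 < \cdots < r_k < v\,\}.
\]
So it suffices to establish the pointwise integrand inequality
\[
\prod_{i=1}^{m-k}(s_i-u)^{\frac{n}{m-k}-1} \;\leq\; \prod_{i=1}^{m-k}(s_i-u)^{\frac{n+k}{m}-1} \prod_{j=1}^{k}(r_j-u)^{\frac{n+k}{m}-1}
\]
on $\Omega$, and then apply Fubini. (We may assume $k < m$ so $n/(m-k)$ is defined; the case $k=m$ is read with the convention $S^{(0)}=1$.)

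Cancelling the positive common factor $\prod_{i=1}^{m-k}(s_i-u)^{(n+k)/m - 1}$ reduces the task to
\[
\prod_{i=1}^{m-k}(s_i - u)^{\eta} \;\leq\; \prod_{j=1}^{k}(r_j - u)^{\mu},
\]
where $\eta := \tfrac{n}{m-k} - \tfrac{n+k}{m}$ and $\mu := \tfrac{n+k}{m} - 1$. A short algebraic computation yields
\[
\eta = \frac{k\,(n+k-m)}{m(m-k)} \geq 0, \qquad \mu = \frac{n+k-m}{m} \geq 0,
\]
both nonnegative since $n \geq m$, and crucially the matching identity
\[
(m-k)\,\eta \;=\; k\,\mu \;=\; \frac{k(n+k-m)}{m}.
\]
This balance of exponents is the actual content of the lemma: the choices $n/(m-k)$ on the left and $(n+k)/m$ on the right are arranged precisely so that the total power stripped from the $s$-variables equals the total power attached to the $r$-variables.

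To close, since $s_i - u \leq t - u$ and $r_j - u \geq t - u$ on $\Omega$, and both exponents $\eta,\mu$ are $\geq 0$, monotonicity gives
\[
\prod_{i=1}^{m-k}(s_i - u)^{\eta} \;\leq\; (t-u)^{(m-k)\eta} \;=\; (t-u)^{k\mu} \;\leq\; \prod_{j=1}^{k}(r_j - u)^{\mu},
\]
which is the required pointwise bound. Integrating over $\Omega$ yields the lemma. The only real obstacle is locating the matching-exponent identity $(m-k)\eta = k\mu$; once it is spotted, the proof reduces to a one-line monotonicity argument, explaining why the lemma is relegated to an appendix.
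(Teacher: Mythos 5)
Your argument is correct and essentially identical to the paper's proof, which likewise writes both sides as integrals of $\prod_i(s_i-u)^{b-1}$ over the product simplex $\triangle_{m-k}(s,t)\times\triangle_{k}(t,v)$ and exploits the ordering $s_i<t<r_j$ together with the matching of total exponents (your identity $(m-k)\eta=k\mu$). The only quibble is your parenthetical on $k=m$: with the convention $S^{(0)}=1$ that case is actually false when $v-u<1$ and $n>m$, but it is vacuous anyway since $\rho_u^{n/(m-k)}$ is undefined there and the lemma is only ever applied with $k<m$.
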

\begin{proof}
In the third line below, we used that $s_{j}>s_{i}$ for $m-k+1\leq j\leq m$
and $1\leq i\leq m-k$, 
\begin{eqnarray*}
 &  & S^{\left(m-k\right)}\left(\frac{m-k}{n}\left(\cdot-u\right)^{\frac{n}{m-k}}\right)_{s,t}\frac{\left(v-t\right)^{k}}{k!}\\
 & = & \int_{s<s_{1}<\ldots<s_{m-k}<t}\Pi_{i=1}^{m-k}\left(s_{i}-u\right)^{\frac{n+k-m}{m-k}}\mathrm{d}s_{1}\ldots\mathrm{d}s_{m-k}\\
 &  & \times\int_{t<s_{m-k+1}<\ldots<s_{m}<v}\mathrm{d}s_{m-k+1}\ldots\mathrm{d}s_{m}\\
 & \leq & \int_{s<s_{1}<\ldots<s_{m-k}<t}\Pi_{i=1}^{m-k}\left(s_{i}-u\right)^{\frac{n+k-m}{m}}\mathrm{d}s_{1}\ldots\mathrm{d}s_{m-k}\\
 &  & \times\int_{t<s_{m-k+1}<\ldots<s_{m}<v}\Pi_{i=m-k+1}^{m}\left(s_{i}-u\right)^{\frac{n+k-m}{m}}\mathrm{d}s_{m-k+1}\ldots\mathrm{d}s_{m}\\
 & = & S^{\left(m-k\right)}\left(\frac{m}{n+k}\left(\cdot-u\right)^{\frac{n+k}{m}}\right)_{s,t}S^{\left(k\right)}\left(\frac{m}{n+k}\left(\cdot-u\right)^{\frac{n+k}{m}}\right)_{t,v}.
\end{eqnarray*}

\end{proof}
The following result gives an estimate for the remainder of a coproduct
sum of branched rough paths. It may look like we are proving more
than the factorial decay result we need, but in fact such estimate
provides exactly the necessary induction hypothesis to prove the factorial
decay estimate. 
\begin{lem}
\label{main lemma}Let $0<\gamma\leq1$ and $N=\lfloor\gamma^{-1}\rfloor$
. Let $X$ be a $\gamma-$branched rough path. If for any $0\leq n\leq N$,
\begin{eqnarray}
\Vert X_{s,t}^{n}\Vert_{\mathcal{T},\gamma,\beta} & \leq & \frac{\left(t-s\right)^{n\gamma}}{n!^{\gamma}},\label{eq:decay of derivatives}
\end{eqnarray}
and 
\begin{equation}
\beta\geq6\exp\left(7\sum_{i=1}^{N+1}\left(N+1\right)^{i}\right)\sum_{r=2}^{\infty}\left(\frac{2}{r-1}\wedge1\right)^{\left(N+1\right)\gamma}|\mathcal{T}_{N}|{}^{1-\gamma}\label{eq:beta condition}
\end{equation}
then the following holds for all $n$,
\begin{eqnarray}
 &  & \Vert\sum_{k\geq N+1}X_{u,s}^{n-k}\star X_{s,t}^{k}\Vert_{\mathcal{T},\gamma,\beta}\leq\left[\frac{1}{\left(n-N-1\right)!}S^{\left(N+1\right)}\left(\rho_{u}^{\frac{n}{N+1}}\right)_{s,t}\right]^{\gamma}1_{\left\{ n\geq N+1\right\} }.\label{eq:final conclusion}
\end{eqnarray}
\end{lem}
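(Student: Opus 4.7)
The plan is to prove Lemma \ref{main lemma} by strong induction on $n$, following the Young--Lyons partition-reduction template from the $\gamma > 1/2$ theory \cite{Lyons94}, but adapted to branched rough paths via the pieces already assembled in the paper. For $n \leq N$ the inequality is vacuous: the indicator $1_{\{n \geq N+1\}}$ vanishes, and the sum $\sum_{k \geq N+1} X_{u,s}^{n-k} \star X_{s,t}^{k}$ is empty since each summand carries a component $X^{n-k}$ with $n-k<0$. So assume the estimate holds for all $m < n$ with $n \geq N+1$. Using the Gubinelli identity (\ref{eq:induction relation}), I replace every $X_{s,t}^{k}$ for $k \geq N+1$ by the limit of its Riemann sum $X_{s,t}^{\mathcal{P},k}$ over partitions $\mathcal{P}$ of $[s,t]$, reducing the task to estimating $\sum_{k \geq N+1} X_{u,s}^{n-k} \star X_{s,t}^{\mathcal{P},k}$ uniformly in $\mathcal{P}$ and then passing to the limit.

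Next I would iteratively coarsen $\mathcal{P}$ down to $\{s,t\}$ by removing one point $t_j$ at a time. Lemma \ref{drop point algebra} turns the change produced by such a removal into the triple sum
\[
\sum_{\substack{k_2+k_3 \geq N+1 \\ 1 \leq k_3 \leq N}} X_{u,t_{j-1}}^{n-k_2-k_3} \star X_{t_{j-1},t_j}^{k_2} \star X_{t_j,t_{j+1}}^{k_3},
\]
which I split into two regimes. In regime (A), $k_2 \geq N+1$: the factor $\sum_{k_2 \geq N+1} X_{u,t_{j-1}}^{(n-k_3)-k_2} \star X_{t_{j-1},t_j}^{k_2}$ is exactly the quantity controlled by the induction hypothesis at $m = n - k_3 < n$, while the remaining $\star X_{t_j,t_{j+1}}^{k_3}$ is a small-order piece bounded by hypothesis (\ref{eq:decay of derivatives}). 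In regime (B), $k_2 \leq N$ (forcing $k_2+k_3 \geq N+1$): both intermediate factors have order $\leq N$ and are directly bounded by (\ref{eq:decay of derivatives}), after which one extracts the tail by Lemma \ref{Taylor binomial}.

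To glue these pieces across the $\star$-products I would invoke, at each $\star$, the forest multiplication bound of Lemma \ref{lem:Computation lemma-1} (in regime B) or the tree-to-forest passage of Lemma \ref{tree to forest} (in regime A) so that the forest norm produced by one side of the $\star$ can be combined with the tree norm of the other; this is exactly what costs the constants $\hat{c}_N$ and $\tilde{c}_N$ and explains the form of (\ref{eq:beta condition}). The resulting time-weights must then be reshaped into the target $S^{(N+1)}(\rho_u^{n/(N+1)})_{s,t}$: overlapping pieces of the form $S^{(N+1)}(\rho_u^{\cdot})_{s,t} (t-u)^{m}$ are converted by the overlapping-interval Lemma \ref{lem:binomial lemma for overlapping intervals}, adjacent pieces $S^{(m-k)}(\rho_u^{\cdot})_{s,t} (v-t)^{k}/k!$ by the adjacent-interval Lemma \ref{lem:binomial lemma for adjacent intervals}, and the monotonicity Lemma \ref{lem:For-all-decreasing} is used to align all exponents at the level $N+1$. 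Young's greedy choice of $t_j$ among the $r$ current points ensures $|t_{j+1}-t_{j-1}| \leq 2(t-s)/(r-1)$, each reduction thus contributes a factor $(2/(r-1))^{(N+1)\gamma}$, and summing in $r \geq 2$ produces the series $\sum_{r \geq 2}(2/(r-1) \wedge 1)^{(N+1)\gamma}$ appearing in the hypothesis on $\beta$.

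The main obstacle, and the reason the argument cannot simply mimic the geometric proof, is that $(s,t) \mapsto S^{(N+1)}(\rho_u^{n/(N+1)})_{s,t}$ is \emph{not} a control in $(s,t)$: it does not super-add along $s<v<t$. The entire point of Lemmas \ref{lem:For-all-decreasing}, \ref{lem:binomial lemma for overlapping intervals} and \ref{lem:binomial lemma for adjacent intervals} is to compensate for this failure, and the real work is to verify that the bound produced by a single point-drop can be rewritten, after those binomial manipulations, in the exact functional form of the target right-hand side with only a controlled multiplicative blow-up. This is also where the otherwise mysterious exponent $n/(N+1)$ in $\rho_u^{n/(N+1)}$ is forced on us, and where the passage between tree and forest norms---the branched analogue of $\|a \otimes b\| \leq \|a\|\|b\|$---is indispensable, since the triple product $X \star X \star X$ created by Lemma \ref{drop point algebra} lives naturally on forests rather than on trees.
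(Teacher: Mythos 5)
Your proposal follows essentially the same route as the paper: induction on $n$, Gubinelli's Riemann-sum identity, successive removal of partition points via Lemma \ref{drop point algebra}, a split of the resulting triple sum into the regime handled by the induction hypothesis (through Lemmas \ref{tree to forest} and \ref{lem:For-all-decreasing}) and the low-order regime handled by (\ref{eq:decay of derivatives}) together with Lemmas \ref{lem:Computation lemma-1} and \ref{Taylor binomial}, followed by the adjacent-interval lemma and Chen's identity to reassemble the time weights. Two points, however, need correction.

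First, the greedy choice of the point to drop. You select $t_j$ so that $|t_{j+1}-t_{j-1}|\leq 2(t-s)/(r-1)$, which is the classical Young choice. But after Chen's identity the quantity to be controlled is $S^{(N+1)}(\rho_u^{n/(N+1)})_{t_{j-1},t_{j+1}}=c_{N,n}\bigl[(t_{j+1}-u)^{n/(N+1)}-(t_{j-1}-u)^{n/(N+1)}\bigr]^{N+1}$, and a bound on $t_{j+1}-t_{j-1}$ alone does not yield $(t_{j+1}-u)^{n/(N+1)}-(t_{j-1}-u)^{n/(N+1)}\leq\tfrac{2}{r-1}\bigl[(t-u)^{n/(N+1)}-(s-u)^{n/(N+1)}\bigr]$: when the short interval sits near $t$, the mean value theorem produces an extra factor of order $n/(N+1)$, which depends on $n$ and cannot be absorbed into the fixed constant $\beta$ that the induction requires. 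The pigeonhole must be applied directly to the increments of $v\mapsto(v-u)^{n/(N+1)}$, whose telescoped sum over $j$ is at most twice the total increment; that is what the paper does.

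Second, your closing diagnosis is backwards. The map $(s,t)\mapsto S^{(N+1)}(\rho_u^{n/(N+1)})_{s,t}$ \emph{is} superadditive for fixed $u$: it equals a constant times $[f(t)-f(s)]^{N+1}$ with $f(v)=(v-u)^{n/(N+1)}$, and $a^{N+1}+b^{N+1}\leq(a+b)^{N+1}$. This superadditivity is precisely what makes the point-dropping argument close. The object that fails to be a control is the binomial tail $\sum_{j\geq N+1}(s-u)^{n-j}(t-s)^{j}/((n-j)!j!)$; the function $S^{(N+1)}(\rho_u^{n/(N+1)})$ is introduced as a dominating control for it (Lemma \ref{Taylor binomial}), with the appendix lemmas verifying its compatibility with the algebraic operations. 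This second point does not invalidate your steps, but the first one does require the fix above.
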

\begin{proof}
We shall prove this by induction on $n$. The base induction $n=N$
is trivial as both sides in equation (\ref{eq:final conclusion})
is zero. By the induction hypothesis and the compatibility of our
estimate with tree multiplication (Lemma \ref{tree to forest}), for
all $m\leq n$, 
\begin{eqnarray*}
 &  & \Vert\sum_{l\geq N+1}X_{u,s}^{m-l}\star X_{s,t}^{l}\Vert_{\mathcal{F},\gamma,\hat{c}_{N}^{-1}\beta}\leq\left[\frac{1}{\left(m-N-1\right)!}S^{\left(N+1\right)}\left(\rho_{u}^{\frac{m}{N+1}}\right)_{s,t}\right]^{\gamma}1_{\left\{ m\geq N+1\right\} },
\end{eqnarray*}
and as our estimate is decreasing (see Lemma \ref{lem:For-all-decreasing}),
\begin{eqnarray*}
 &  & \Vert\sum_{l\geq N+1}X_{u,s}^{m-l}\star X_{s,t}^{l}\Vert_{\mathcal{F},\gamma,\hat{c}_{N}^{-1}\beta}\leq\left[\frac{\exp\left(N+1\right)}{\left(m-r\right)!}S^{\left(r\right)}\left(\rho_{u}^{\frac{m}{r}}\right)_{s,t}\right]^{\gamma}1_{\left\{ m\geq N+1\right\} }.
\end{eqnarray*}
for any $1\leq r\leq N$. As multiplication is bounded in forest norm
(see Lemma \ref{lem:Computation lemma-1}) for factorial decay estimates,
we have for all $1\leq r\leq N$, 
\begin{eqnarray*}
K^{\prime} & := & \Vert\sum_{r\leq l\leq N}X_{u,s}^{m-l}\star X_{s,t}^{l}\Vert_{\mathcal{F},\gamma,\left(\tilde{c}_{N}C_{N}\right)^{-1}\beta}\\
 & \leq & \sum_{r\leq l\leq N}\Vert X_{u,s}^{m-l}\Vert_{\mathcal{T},\gamma,\left(\tilde{c}_{N}C_{N}\right)^{-1}\beta}\Vert X_{s,t}^{l}\Vert_{\mathcal{T},\gamma,\left(\tilde{c}_{N}C_{N}\right)^{-1}\beta}.
\end{eqnarray*}
As our estimate is a factorial decay estimate, 
\[
K^{\prime}\leq\sum_{r\leq l\leq N}\frac{\left(s-u\right)^{\gamma(m-l)}(t-s)^{\gamma l}}{(m-l)!^{\gamma}l!^{\gamma}}.
\]
By the conventional concavity estimate for sum, 
\[
K^{\prime}\leq(N+1)^{1-\gamma}\big(\sum_{r\leq l\leq N}\frac{\left(s-u\right)^{(m-l)}(t-s)^{l}}{(m-l)!l!}\big)^{\gamma}.
\]
By the binomial Lemma which bounds the remainder of a binomial sum
by our estimate (Lemma \ref{Taylor binomial}), 
\[
K^{\prime}\leq(N+1)^{1-\gamma}\Big[\frac{1}{\left(m-r\right)!}S^{\left(r\right)}\left(\rho_{u}^{\frac{m}{n-r}}\right)_{s,t}\Big]^{\gamma}.
\]
In particular, since $\hat{c}_{N}\geq\tilde{c}_{N}C_{N}$, for all
$r\leq N$, 
\begin{eqnarray}
\Vert\sum_{r\leq l}X_{u,s}^{m-l}\star X_{s,t}^{l}\Vert_{\mathcal{F},\gamma,\hat{c}_{N}^{-1}\beta} & \leq & c_{5,N}\left[\frac{1}{\left(m-r\right)!}S^{\left(r\right)}\left(\rho_{u}^{\frac{m}{r}}\right)_{s,t}\right]^{\gamma},\label{eq:deduction 2}
\end{eqnarray}
where $c_{5,N}=2\exp(N+1)$. Note first that as multiplication $\star$
is bounded in tree norm (Lemma \ref{lem:tree sum}), 

\begin{eqnarray*}
 &  & \Vert\sum_{\substack{1\leq k\leq N\\
l\geq N+1-k
}
}X_{u,t_{j-1}}^{n-l-k}\star X_{t_{j-1},t_{j}}^{l}\star X_{t_{j},t_{j+1}}^{k}\Vert_{\mathcal{T},\gamma,\hat{c}_{N}^{-1}\beta}\\
 & \leq & c_{N}\beta^{-1}\sum_{1\leq k\leq N}\Vert\sum_{l\geq N+1-k}X_{u,t_{j-1}}^{n-l-k}\star X_{t_{j-1},t_{j}}^{l}\Vert_{\mathcal{F},\gamma,\hat{c}_{N}^{-1}\beta}\Vert X_{t_{j},t_{j+1}}^{k}\Vert_{\mathcal{T},\gamma,\hat{c}_{N}^{-1}\beta}.
\end{eqnarray*}
By our assumption that we have a factorial decay estimate for $X^{1},\ldots,X^{N}$
(see (\ref{eq:decay of derivatives})) and (\ref{eq:deduction 2}), 

\begin{eqnarray*}
\tilde{K} & := & \Vert\sum_{l\geq N+1-k}X_{u,t_{j-1}}^{n-l-k}\star X_{t_{j-1},t_{j}}^{k}\Vert_{\mathcal{F},\gamma,\hat{c}_{N}^{-1}\beta}\Vert X_{t_{j},t_{j+1}}^{k}\Vert_{\mathcal{T},\gamma,\hat{c}_{N}^{-1}\beta}\\
 & \leq & c_{5,N}\big[\frac{1}{(n-N-1)!}S^{\left(N+1-k\right)}\bigg(\rho_{u}^{\frac{n-k}{N+1-k}}\bigg)_{t_{j-1},t_{j}}\frac{\left(t_{j+1}-t_{j}\right)^{k}}{k!}\big]^{\gamma}.
\end{eqnarray*}
By the binomial Lemma for adjacent intervals, Lemma \ref{lem:binomial lemma for adjacent intervals}
, 
\[
\tilde{K}\leq c_{5,N}\big[\frac{1}{(n-N-1)!}S^{\left(N+1-k\right)}\left(\rho_{u}^{\frac{n}{N+1}}\right)_{t_{j-1},t_{j}}S^{\left(k\right)}\left(\rho_{u}^{\frac{n}{N+1}}\right)_{t_{j},t_{j+1}}\big]^{\gamma}.
\]
From here we use the classical concavity estimate for sums and Chen's
identity (see for example Theorem 2.1.2 in \cite{Lyons98}) to obtain
\begin{eqnarray}
 &  & \sum_{1\leq k\leq N}\Vert\sum_{l\geq N+1-k}X_{u,t_{j-1}}^{n-l-k}\star X_{t_{j-1},t_{j}}^{l}\Vert_{\mathcal{F},\gamma,\hat{c}_{N}^{-1}\beta}\Vert X_{t_{j},t_{j+1}}^{k}\Vert_{\mathcal{T},\gamma,\hat{c}_{N}^{-1}\beta}\nonumber \\
 & \leq & \frac{c_{6,N}}{\beta}\big[\frac{1}{(n-N-1)!}\sum_{k_{3}=1}^{N}S^{(N+1-k)}\big(\rho_{u}^{\frac{n}{N+1}}\big)_{t_{j-1},t_{j}}S^{(k)}\big(\rho_{u}^{\frac{n}{N+1}}\big)_{t_{j},t_{j+1}}\big]^{\gamma}\nonumber \\
 & \leq & \frac{c_{6,N}}{\beta}\big[\frac{1}{(n-N-1)!}S^{\left(N+1\right)}\left(\rho_{u}^{\frac{n}{N+1}}\right)_{t_{j-1},t_{j+1}}\big]^{\gamma},\label{eq:after Chen}
\end{eqnarray}
where $c_{6,N}=c_{5,N}(N+1)^{1-\gamma}$. Note that by explicit computation,
there is some constant $c_{N,n}$ independent of $u,v_{1},v_{2}$
such that 
\[
S^{\left(N+1\right)}\left(\rho_{u}^{\frac{n}{N+1}}\right)_{v_{1},v_{2}}=c_{N,n}\left[\left(v_{2}-u\right)^{\frac{n}{N+1}}-\left(v_{1}-u\right)^{\frac{n}{N+1}}\right]^{N+1}.
\]
Since 
\begin{eqnarray*}
 &  & \sum_{j=1}^{r-1}\left(t_{j+1}-u\right)^{\frac{n}{N+1}}-\left(t_{j-1}-u\right)^{\frac{n}{N+1}}\\
 & \leq & 2\left(\left(t-u\right)^{\frac{n}{N+1}}-\left(s-u\right)^{\frac{n}{N+1}}\right),
\end{eqnarray*}
there exists a $j$ such that 
\begin{eqnarray}
 &  & \left(t_{j+1}-u\right)^{\frac{n}{N+1}}-\left(t_{j-1}-u\right)^{\frac{n}{N+1}}\nonumber \\
 & \leq & \frac{2}{r-1}\left(\left(t-u\right)^{\frac{n}{N+1}}-\left(s-u\right)^{\frac{n}{N+1}}\right).\label{eq:drop special point}
\end{eqnarray}
As (\ref{eq:drop special point}) would still hold if we replace $\frac{2}{r-1}$
by $1$, we have 
\begin{eqnarray*}
 &  & \left(t_{j+1}-u\right)^{\frac{n}{N+1}}-\left(t_{j-1}-u\right)^{\frac{n}{N+1}}\\
 & \leq & \left(\frac{2}{r-1}\wedge1\right)\left(\left(t-u\right)^{\frac{n}{N+1}}-\left(s-u\right)^{\frac{n}{N+1}}\right)
\end{eqnarray*}
 Using this particular $j$ in the expression (\ref{eq:after Chen})
as well as the algebraic Lemma \ref{drop point algebra}, we have
\begin{eqnarray*}
 &  & \Vert\sum_{k\geq N+1}X_{u,s}^{n-k}\star\big(X_{s,t}^{\mathcal{P},k}-X_{s,t}^{\mathcal{P}\backslash(t_{j}),k}\big)\Vert_{\mathcal{T},\gamma,\hat{c}_{N}^{-1}\beta}\\
 & \leq & \frac{c_{6,N}}{\beta}\left(\frac{2}{r-1}\wedge1\right)^{\left(N+1\right)\gamma}\frac{1}{\left(n-N-1\right)!^{\gamma}}S^{\left(N+1\right)}\left(\rho_{u}^{\frac{n}{N+1}}\right)_{s,t}^{\gamma}
\end{eqnarray*}
By iteratively removing points and observing that
\[
X_{s,t}^{\left\{ s,t\right\} ,k}=0
\]
for $k\geq N+1$, we have that for all partitions $\mathcal{P}$,
\begin{eqnarray*}
 &  & \Vert\sum_{k\geq N+1}X_{u,s}^{n-k}\star X_{s,t}^{\mathcal{P},k}\Vert_{\mathcal{T},\gamma,\hat{c}_{N}^{-1}\beta}\\
 & \leq & \frac{c_{6,N}}{\beta}\sum_{r=2}^{\infty}\left(\frac{2}{r-1}\wedge1\right)^{\left(N+1\right)\gamma}\frac{1}{\left(n-N-1\right)!^{\gamma}}S^{\left(N+1\right)}\left(\rho_{u}^{\frac{n}{N+1}}\right)_{s,t}^{\gamma}.
\end{eqnarray*}
In particular, 
\begin{eqnarray*}
 &  & \Vert\sum_{k\geq N+1}X_{u,s}^{n-k}\star X_{s,t}^{\mathcal{P},k}\Vert_{\mathcal{T},\gamma,\beta}\\
 & = & \hat{c}_{N}\Vert\sum_{k\geq N+1}X_{u,s}^{n-k}\star X_{s,t}^{\mathcal{P},k}\Vert_{\mathcal{T},\gamma,\hat{c}_{N}^{-1}\beta}\\
 & \leq & \frac{c_{6,N}\hat{c}_{N}}{\beta}\sum_{r=2}^{\infty}\left(\frac{2}{r-1}\wedge1\right)^{\left(N+1\right)\gamma}\frac{1}{\left(n-N-1\right)!^{\gamma}}S^{\left(N+1\right)}\left(\rho_{u}^{\frac{n}{N+1}}\right)_{s,t}^{\gamma}.
\end{eqnarray*}
We have the desired estimate if we let $\left|\mathcal{P}\right|\rightarrow0$
and choose 
\[
\beta\geq c_{6,N}\hat{c}_{N}\sum_{r=2}^{\infty}\left(\frac{2}{r-1}\wedge1\right)^{\left(N+1\right)\gamma}.
\]

\end{proof}
\begin{proof}[Proof of main result Theorem 4]Let $\hat{C}_{N}$ denote
the right hand side of (\ref{eq:beta condition}). For $X\in\mathcal{H}^{*}$,
let $\left\Vert \cdot\right\Vert $ denote the following normalised
Hölder norm of $X$ for degrees up to $N$, 
\[
\left\Vert X\right\Vert =\max_{1\leq\left|\tau\right|\leq N,\tau\;\mbox{trees}}\left\Vert \left\langle X,\tau\right\rangle \right\Vert _{\gamma,\tau}^{\left(\gamma\left|\tau\right|\right)^{-1}}.
\]
where Hölder norm $\Vert\cdot\Vert_{\gamma,\tau}$ of each degree
is define in (\ref{eq:Holder}) in the definition of branched rough
path. Applying Lemma \ref{main lemma}, which we have just proved,
to the branched rough path $Y$ defined for each rooted tree $\tau$
by 
\[
\langle Y_{s,t},\tau\rangle=\frac{1}{\left(N!\left\Vert X\right\Vert \right)^{\gamma\left|\tau\right|}\hat{C}_{N}^{\left|\tau\right|}}\langle X_{s,t},\tau\rangle
\]
 with $\beta=\hat{C}_{N}$, we have by taking $u=s$ that for $|\tau|\geq N+1$,
\begin{eqnarray*}
|\langle X_{u,t},\tau\rangle| & \leq & \frac{1}{\hat{C}_{N}}\left(N!\left\Vert X\right\Vert \right)^{\gamma\left|\tau\right|}\hat{C}_{N}^{\left|\tau\right|}\left[\frac{\left|\tau\right|!}{\tau!\left(\left|\tau\right|-N-1\right)!}S^{\left(N+1\right)}\left(\rho_{u}^{\frac{|\tau|}{N+1}}\right)_{u,t}\right]^{\gamma}\\
 & \leq & \frac{2\exp\left(N+1\right)}{\hat{C}_{N}}\left(N!\left\Vert X\right\Vert \right)^{\gamma\left|\tau\right|}\hat{C}_{N}^{\left|\tau\right|}\frac{\left(t-u\right)^{\gamma\left|\tau\right|}}{\tau!^{\gamma}}.
\end{eqnarray*}
\end{proof}

\section{Appendix: Binomial-type lemmas}
\begin{lem*}
Let $N\in\mathbb{N}\cup\left\{ 0\right\} $ and $n\geq N+1$. For
all $u<s<t$ 
\begin{eqnarray*}
\sum_{j=N+1}^{n}\frac{\left(s-u\right)^{n-j}\left(t-s\right)^{j}}{\left(n-j\right)!j!} & \leq & \frac{1}{\left(n-N-1\right)!}S^{\left(N+1\right)}\left(\frac{N+1}{n}\left(\cdot-u\right)^{\frac{n}{N+1}}\right)_{s,t}.
\end{eqnarray*}
\end{lem*}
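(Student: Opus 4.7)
My plan is to recognize that both sides of the inequality can be written as iterated integrals over the same $(N+1)$-simplex, reducing the problem to a pointwise inequality between the integrands. The argument then becomes an elementary monotonicity statement.

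First I would rewrite the binomial tail on the left-hand side as a Taylor remainder. Applying the integral form of Taylor's theorem to $f(r)=(r-u)^n/n!$ expanded around $s$ (noting $f^{(j)}(r) = (r-u)^{n-j}/(n-j)!$ for $j\leq n$) and using the binomial expansion of $(t-u)^n/n!$, one gets
\begin{equation*}
\sum_{j=N+1}^{n}\frac{(s-u)^{n-j}(t-s)^{j}}{(n-j)!\,j!}
=\int_{s}^{t}\frac{(r-u)^{n-N-1}(t-r)^{N}}{(n-N-1)!\,N!}\,dr.
\end{equation*}
Writing $(t-r)^{N}/N!$ as the volume of the ordered $N$-simplex in $(r,t)$, this becomes the iterated integral
\begin{equation*}
\int_{s<s_{1}<\cdots<s_{N+1}<t}\frac{(s_{1}-u)^{n-N-1}}{(n-N-1)!}\,ds_{1}\cdots ds_{N+1}.
\end{equation*}

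Next I would unwind the right-hand side. Since $\rho_{u}^{n/(N+1)}(r) = \tfrac{N+1}{n}(r-u)^{n/(N+1)}$, differentiating gives $d\rho_{u}^{n/(N+1)}(r) = (r-u)^{n/(N+1)-1}\,dr$, so that
\begin{equation*}
\frac{1}{(n-N-1)!}\,S^{(N+1)}\!\bigl(\rho_{u}^{n/(N+1)}\bigr)_{s,t}
=\int_{s<s_{1}<\cdots<s_{N+1}<t}\frac{\prod_{i=1}^{N+1}(s_{i}-u)^{n/(N+1)-1}}{(n-N-1)!}\,ds_{1}\cdots ds_{N+1}.
\end{equation*}
The two sides are now integrals of two different functions over the same simplex.

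It then suffices to prove the pointwise bound
\begin{equation*}
(s_{1}-u)^{n-N-1}\;\leq\;\prod_{i=1}^{N+1}(s_{i}-u)^{n/(N+1)-1}
\end{equation*}
on the region $s<s_{1}<\cdots<s_{N+1}<t$. The key algebraic observation is that $n-N-1=(N+1)\bigl(\tfrac{n}{N+1}-1\bigr)$, so the left side rewrites as $\prod_{i=1}^{N+1}(s_{1}-u)^{n/(N+1)-1}$. Because $n\geq N+1$, the exponent $n/(N+1)-1$ is non-negative; combined with $0<s_{1}-u\leq s_{i}-u$ for every $i$, the factor-by-factor inequality is immediate, and integrating over the simplex yields the lemma. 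There is no genuine obstacle here: the whole argument hinges on spotting the Taylor-remainder rewriting that puts both sides into the same simplex form.
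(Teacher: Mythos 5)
Your proof is correct and follows essentially the same route as the paper: both rewrite the binomial tail as the Taylor-remainder integral $\frac{1}{(n-N-1)!}\int_{\triangle_{N+1}(s,t)}(s_{1}-u)^{n-N-1}\,ds_{1}\cdots ds_{N+1}$ and then bound the integrand pointwise by $\prod_{i=1}^{N+1}(s_{i}-u)^{(n-N-1)/(N+1)}$ using $s_{1}\leq s_{i}$ on the simplex. The only difference is that you spell out the Taylor's-theorem derivation of the identity, which the paper merely asserts.
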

\begin{proof}
The following identity can be proved using an induction on $N$ or
Taylor's theorem, 
\begin{eqnarray}
J & := & \sum_{j=N+1}^{n}\frac{\left(s-u\right)^{n-j}\left(t-s\right)^{j}}{\left(n-j\right)!j!}\label{eq:taylor's theorme}\\
 & = & \frac{1}{\left(n-N-1\right)!}\int_{\triangle_{N+1}\left(s,t\right)}\left(s_{1}-u\right)^{n-N-1}\mathrm{d}s_{1}\ldots\mathrm{d}s_{N+1}.
\end{eqnarray}
Note that as we are integrating over the domain $s_{1}<s_{2}\ldots<s_{N+1}$,

\begin{eqnarray*}
J & \leq & \frac{1}{\left(n-N-1\right)!}\int_{\triangle_{N+1}\left(s,t\right)}\Pi_{i=1}^{N+1}\left(s_{i}-u\right)^{\frac{n-N-1}{N+1}}\mathrm{d}s_{1}\ldots\mathrm{d}s_{N+1}\\
 & = & \frac{1}{\left(n-N-1\right)!}S^{\left(N+1\right)}\left(\frac{N+1}{n}\left(\cdot-u\right)^{\frac{n}{N+1}}\right)_{s,t}.
\end{eqnarray*}
\end{proof}
\begin{lem*}
(Binomial lemma for overlapping intervals) Let $N\in\mathbb{N}\cup\left\{ 0\right\} $.
Let $n\geq N+1$ and $m\geq0$, and $u\leq s\leq t$, 
\begin{eqnarray}
 &  & \frac{n!}{\left(n-N-1\right)!}S^{\left(N+1\right)}\left(\frac{N+1}{n}\left(\cdot-u\right)^{\frac{n}{N+1}}\right)_{s,t}\left(t-u\right)^{m}\label{eq:tree multiply-1}\\
 & \leq & \frac{\left(n+m\right)!}{\left(n+m-N-1\right)!}S^{\left(N+1\right)}\left(\frac{N+1}{n+m}\left(\cdot-u\right)^{\frac{n+m}{N+1}}\right)_{s,t}.
\end{eqnarray}
\end{lem*}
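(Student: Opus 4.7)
The plan is to turn the inequality into an explicit algebraic statement by using the fact that $S^{(N+1)}(\rho_u^b)_{s,t}$ evaluates in closed form. Writing $\rho(v) = \frac{1}{b}(v-u)^b$, so that $d\rho(v) = (v-u)^{b-1}\,dv$, the substitution $z_i = \rho(s_i)$ in each coordinate gives
\[
S^{(N+1)}(\rho_u^b)_{s,t} = \int_{\rho(s) < z_1 < \cdots < z_{N+1} < \rho(t)} dz_1\cdots dz_{N+1} = \frac{(\rho(t) - \rho(s))^{N+1}}{(N+1)!}.
\]
Applying this with $b = n/(N+1)$ and $b = (n+m)/(N+1)$ and cancelling the common factor $(N+1)^{N+1}/(N+1)!$, the inequality to prove reduces to
\[
\frac{n!/(n-N-1)!}{n^{N+1}}\bigl[(t-u)^{n/(N+1)} - (s-u)^{n/(N+1)}\bigr]^{N+1}(t-u)^{m} \leq \frac{(n+m)!/(n+m-N-1)!}{(n+m)^{N+1}}\bigl[(t-u)^{(n+m)/(N+1)} - (s-u)^{(n+m)/(N+1)}\bigr]^{N+1}.
\]

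I would then split this into a combinatorial piece and an analytic piece. The left coefficient is $\prod_{i=0}^{N}(1 - i/n)$, which is non-decreasing in $n$, so
\[
\frac{n!/(n-N-1)!}{n^{N+1}} \leq \frac{(n+m)!/(n+m-N-1)!}{(n+m)^{N+1}},
\]
and it suffices to establish the inequality after replacing the left coefficient by the right. Taking $(N+1)$-th roots and setting $a = (s-u)^{1/(N+1)}$, $b = (t-u)^{1/(N+1)}$, so that $0 \leq a \leq b$, what remains is
\[
(b^{n} - a^{n})\,b^{m} \leq b^{n+m} - a^{n+m},
\]
which rearranges to $a^{n+m} \leq a^{n} b^{m}$ and is immediate from $a \leq b$.

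There is no real obstacle here: the whole argument is a routine closed-form check once one observes that the simplex integral against the power-weighted $\rho_u^b$ collapses via the substitution $z_i = \rho(s_i)$, after which the inequality cleanly decouples into the monotonicity of the rational prefactor and a one-line algebraic fact about powers.
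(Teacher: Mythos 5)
Your proof is correct and follows essentially the same route as the paper: evaluate the simplex integral in closed form, handle the rational prefactor by its monotonicity in $n$, and reduce the rest to the elementary inequality $(b^{n}-a^{n})b^{m}\leq b^{n+m}-a^{n+m}$ for $0\leq a\leq b$ (the paper states this as $c(b-a)\leq bc-ad$ for $b\geq a$, $c\geq d$). No substantive difference.
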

\begin{proof}
Using that for any $b\geq a$ and $c\geq d$, $c(b-a)\leq(bc-ad)$,
\begin{eqnarray}
 &  & S^{\left(N+1\right)}\left(\left(\cdot-u\right)^{\frac{n}{N+1}}\right)_{s,t}\left(t-u\right)^{m}\nonumber \\
 & = & \frac{\big[\left(t-u\right)^{\frac{n}{N+1}}-\left(s-u\right)^{\frac{n}{N+1}}\big]^{N+1}\left(t-u\right)^{m}}{\left(N+1\right)!}\nonumber \\
 & \leq & \frac{\big[\left(t-u\right)^{\frac{n+m}{N+1}}-\left(s-u\right)^{\frac{n+m}{N+1}}\big]^{N+1}}{\left(N+1\right)!}\nonumber \\
 & = & S^{\left(N+1\right)}\left(\left(\cdot-u\right)^{\frac{n+m}{N+1}}\right).\label{eq:calcul 1}
\end{eqnarray}
The lemma now follows by noting that as $n+m\geq n$, 
\begin{equation}
\frac{n!}{\left(n-N-1\right)!}\left(\frac{N+1}{n}\right)^{N+1}\leq\frac{\left(n+m\right)!}{\left(n+m-N-1\right)!}\left(\frac{N+1}{n+m}\right)^{N+1}.\label{eq:calcu 2}
\end{equation}
\end{proof}
\begin{lem*}
(The estimate is decreasing) For all $0\leq k\leq m\leq n$ and $u\leq s\leq t$,
\begin{equation}
\frac{1}{\left(n-m\right)!}S^{\left(m\right)}\left(\frac{m}{n}\left(\cdot-u\right)^{\frac{n}{m}}\right)_{s,t}\leq\frac{\exp m}{\left(n-m+k\right)!}S^{\left(m-k\right)}\left(\frac{m-k}{n}\left(\cdot-u\right)^{\frac{n}{m-k}}\right)_{s,t}.\label{eq:decreasing-1}
\end{equation}
\end{lem*}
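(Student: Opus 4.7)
The plan is to reduce the inequality to a pointwise comparison by writing each $S^{(p)}$ in closed form. Since $\rho_u^{n/p}(v)=\frac{p}{n}(v-u)^{n/p}$ is a smooth scalar path, symmetrising over the simplex gives
\[
S^{(p)}\bigl(\rho_u^{n/p}\bigr)_{s,t} \;=\; \frac{1}{p!}\bigl(\rho_u^{n/p}(t)-\rho_u^{n/p}(s)\bigr)^p \;=\; \frac{(p/n)^p}{p!}\bigl(\beta^{n/p}-\alpha^{n/p}\bigr)^p,
\]
where $\alpha:=s-u$ and $\beta:=t-u$. Applying this for $p=m$ and $p=m-k$ turns the lemma into an explicit inequality in the real variables $\alpha,\beta,m,k,n$, which I would split into an analytic part and a combinatorial part.

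The analytic part is the monotonicity
\[
\bigl(\beta^{n/m}-\alpha^{n/m}\bigr)^m \;\leq\; \bigl(\beta^{n/(m-k)}-\alpha^{n/(m-k)}\bigr)^{m-k}.
\]
Setting $r:=\alpha/\beta\in[0,1]$, a common factor $\beta^n$ cancels and the claim reduces to showing that $p\mapsto(1-r^{n/p})^p$ is non-increasing on $p>0$ for each fixed $r\in[0,1)$. This follows from a one-line derivative computation: with $f(p)=p\log(1-r^{n/p})$ and $\frac{d}{dp}r^{n/p}=r^{n/p}\frac{n|\log r|}{p^2}$, one gets
\[
f'(p)\;=\;\log\bigl(1-r^{n/p}\bigr)-\frac{n|\log r|}{p}\cdot\frac{r^{n/p}}{1-r^{n/p}},
\]
which is negative since both terms on the right are negative.

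The combinatorial part, after the above cancellation, is the inequality
\[
\frac{(n-m+k)!\,(m-k)!}{(n-m)!\,m!}\cdot\frac{(m/n)^m}{((m-k)/n)^{m-k}} \;\leq\; e^m.
\]
Bounding $(n-m+k)!/(n-m)!=\prod_{j=1}^{k}(n-m+j)\leq n^k$ cancels the net factor of $n^k$ coming from the power terms, leaving $\frac{(m-k)!\,m^m}{m!\,(m-k)^{m-k}}=\frac{m^m/m!}{(m-k)^{m-k}/(m-k)!}$. The standard bound $m^m/m!\leq e^m$ (read off from $e^m=\sum_{j\geq 0}m^j/j!$) controls the numerator and $(m-k)^{m-k}/(m-k)!\geq 1$ controls the denominator, which closes the estimate. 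The $e^m$ in the statement is rather loose---a Stirling-based argument would yield $e^k$---but the only genuine obstacle is the monotonicity step, and even that is routine calculus once the $\beta^n$ factor has been extracted.
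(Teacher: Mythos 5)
Your proof is correct and follows essentially the same route as the paper: evaluate $S^{(p)}$ in closed form, reduce to the monotonicity of $(\beta^{n/p}-\alpha^{n/p})^{p}$ in $p$, and then use the same combinatorial bounds $\prod_{j=1}^{k}(n-m+j)\leq n^{k}$, $m^{m}/m!\leq e^{m}$ and $(m-k)^{m-k}/(m-k)!\geq1$. The only difference is that the paper dispatches the monotonicity step with the one-line superadditivity inequality $(a-b)^{p}\leq a^{p}-b^{p}$ for $p\geq1$ (applied with $p=m/(m-k)$), whereas you differentiate $p\mapsto p\log(1-r^{n/p})$; both are valid.
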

\begin{proof}
Since for any $p\geq1$ and $a\geq b$, $\left(a-b\right)^{p}\leq a^{p}-b^{p}$,
\begin{eqnarray*}
J^{\prime} & := & \frac{1}{\left(n-m\right)!}S^{\left(m\right)}\left(\frac{m}{n}\left(\cdot-u\right)^{\frac{n}{m}}\right)_{s,t}\\
 & = & \frac{1}{\left(n-m\right)!}\left(\frac{m}{n}\right)^{m}\frac{\left(\left(t-u\right)^{\frac{n}{m}}-\left(s-u\right)^{\frac{n}{m}}\right)^{m}}{m!}\\
 & \leq & \frac{1}{\left(n-m\right)!}\left(\frac{m}{n}\right)^{m}\frac{\left(\left(t-u\right)^{\frac{n}{m-k}}-\left(s-u\right)^{\frac{n}{m-k}}\right)^{m-k}}{m!}
\end{eqnarray*}
As $\frac{m^{m}}{m!}\leq\exp\left(m\right)$, $n^{m}(n-m)!\geq n^{m-k}(n-m+k)!$
and $\frac{(m-k)^{m-k}}{(m-k)!}\geq1$, 
\begin{eqnarray*}
J^{\prime} & \leq & \frac{\exp\left(m\right)}{\left(n-m+k\right)!}\left(\frac{m-k}{n}\right)^{m-k}\frac{\left(\left(t-u\right)^{\frac{n}{m-k}}-\left(s-u\right)^{\frac{n}{m-k}}\right)^{m-k}}{\left(m-k\right)!}\\
 & = & \frac{\exp\left(m\right)}{\left(n-m+k\right)!}S^{\left(m-k\right)}\left(\frac{m-k}{n}\left(\cdot-u\right)^{\frac{n}{m-k}}\right)_{s,t}.
\end{eqnarray*}
\end{proof}

\end{document}